\documentclass[preprint]{elsarticle}
\usepackage{cmap} 
           \usepackage[T2A]{fontenc}   
           \usepackage[utf8]{inputenc}
           \usepackage[english]{babel}
\usepackage{amsmath}
\usepackage{amsfonts}
\usepackage{amssymb}
\usepackage{amsthm}
\usepackage[pdftex,unicode]{hyperref}
\usepackage{amscd}
\usepackage[all,cmtip]{xy}
\usepackage{mathtools}
\usepackage{comment}
\usepackage{tikz-cd}
\usetikzlibrary{babel}

  \def\R{\mathbb R}

\def\om{\omega}
\def\Om{\Omega}

\def\be{\beta}

\def\al{\alpha}

\def\ph{\varphi}
\def\de{\delta}

\def\nfs/{NFS}
\def\cdp/{CDP}
\def\cdpz/{CDP${}_0$}



\def\sq#1#2{(#1)_{#2}}
\def\sqn#1{\sq{#1}{n\in\om}}

\def\cl#1{\overline{#1}}

\def\id{\operatorname{id}}
\def\sp{\operatorname{sp}}
\def\bt{\beta}

\def\es{\varnothing}

\def\nom{{n\in\om}}

\def\sset#1{\{#1\}}

\def\set#1{\bbset#1\eeset}
\def\bbset#1:#2\eeset{\{#1\,:\,#2\}}

\def\bbsett#1:#2\eesett{\{#1\,:\,\text{#2}\}}

\def\ibbset#1:#2\ieeset{(#1)_{#2}}

\def\gm{{\mathfrak{M}}}

\def\DD#1#2#3{\D_{#1,#2}(#3)}

\def\cP{{\mathcal P}}

\def\gF{{\mathfrak{F}}}

\def\cF{{\mathcal F}}
\def\cS{{\mathcal S}}

\def\gi{{\mathfrak{i}}}
\def\gm{{\mathfrak{m}}}



\def\cV{{\mathcal V}}

\def\diag{\mathop{\bigtriangleup}}

\newcommand\restrA[2]{{
  \left.\kern-\nulldelimiterspace 
  #1 
  \vphantom{\big|} 
  \right|_{#2} 
  }}

\newcommand\restrB[2]{\ensuremath{\left.#1\right|_{#2}}}

\def\restr#1#2{\restrB{#1}{#2}}

\def\pwr#1_#2{#1^{[#2]}}

\def\te{\theta}

\def\D{\Delta}
\def\term#1{{\it #1}}

\def\dddm#1(#2){N_{#1}(#2)}
\def\dddb#1(#2){B_{#1}(#2)}

\def\wh#1{\widehat{#1}}
\def\et(#1){ (#1)}

\def\bitem#1,#2.{ $#2\nrightarrow #1$:\ }


\newtheorem{proposition}{Proposition}
\newtheorem{theorem}{Theorem}

\newtheorem*{lemma*}{Lemma}
\newtheorem{cor}{Corollary}

\theoremstyle{definition}
\newtheorem{definition}{Definition}
\newtheorem{question}{Problem}

\theoremstyle{remark}

\newtheorem*{note*}{Note}

\def\oo#1/{$O_{#1}$}




\def\gd/{$G_\delta$}

\def\F{\Phi}

\def\rarr{\Rightarrow}

\def\lrarr{\Leftrightarrow}

\def\gm{{\mathfrak{m}}}
\def\bx{\bar x}
\def\wh#1{\widehat{#1}}

\def\DD{\operatornamewithlimits{%
  \mathchoice{\vcenter{\hbox{\huge \Delta}}}
             {\vcenter{\hbox{\Large \Delta}}}
             {\mathrm{\Delta}}
             {\mathrm{\Delta}}}}
             
\def\DD{\operatornamewithlimits{\mbox{\hbox{\huge \Delta}}}}

\def\DD{\operatornamewithlimits{\Delta}}

\def\DD{\operatornamewithlimits{%
  \mathchoice{\vcenter{\hbox{\huge $\Delta$}}}
             {\vcenter{\hbox{\Large $\Delta$}}}
             {\mathrm{\Delta}}
             {\mathrm{\Delta}}}}

\def\DD{\operatornamewithlimits{%
  \mathchoice{\vcenter{\hbox{\huge $\bigtriangleup$}}}
             {\vcenter{\hbox{\Large $\bigtriangleup$}}}
             {\mathrm{\bigtriangleup}}
             {\mathrm{\bigtriangleup}}}}

\def\DD{\operatornamewithlimits{%
  \mathchoice{\vcenter{\hbox{\Large $\bigtriangleup$}}}
             {\vcenter{\hbox{\large $\bigtriangleup$}}}
             {\mathrm{\bigtriangleup}}
             {\mathrm{\bigtriangleup}}}}

\def\diag{\DD}
\def\diagsmall{\bigtriangleup}

\def\si{\sigma}

\def\nom{{n\in\om}}

\def\gF{{\mathfrak{F}}}

\def\vt#1{\tilde{#1}}

\def\rlim{\varprojlim}

\def\op#1#2{{\mathfrak{e}}^{#1}_{#2}}

\def\AEx{\mathrm{AE}}

\def\:{\colon}


\begin{document}

\begin{frontmatter}

\title{Extensions and factorizations of topological and semitopological universal algebras}
\author{Evgenii Reznichenko} 

\ead{erezn@inbox.ru}

\address{Department of General Topology and Geometry, Mechanics and  Mathematics Faculty, M.~V.~Lomonosov Moscow State University, Leninskie Gory 1, Moscow, 19991 Russia}

\begin{abstract}
The possibility of extending operations of topological and semitopological algebras to their Stone–Čech compactification and factorization of continuous functions through homomorphisms to metrizable algebras are investigated. Most attention is paid to pseudocompact and compact algebras.
\end{abstract}
\begin{keyword}
universal algebra
\sep
topological algebra
\sep
semitopological algebra
\sep
Mal'cev algebra
\sep
topological group
\sep
semitopological group
\sep
paratopological group
\sep
pseudocompact algebra
\sep
compact algebra
\end{keyword}
\end{frontmatter}

\def\vars{\operatorname{vars}}

\def\fV{{\mathfrak{V}}}
\def\ppfont#1{\mathsf{#1}}

\def\Tmp{\ppfont{T}^{+}}

\def\Tm{\ppfont{T}}
\def\Tf{\ppfont{F}}
\def\FTm{\ppfont{T_f}}
\def\Plm{\ppfont{P}}
\def\FPlm{\ppfont{P_f}}

\def\FTmn#1{\ppfont{T_{f,\mathnormal{#1}}}}
\def\FPlmn#1{\ppfont{P_{f,\mathnormal{#1}}}}

\def\Tr{\ppfont{Tr}}
\def\FTr#1{\ppfont{Tr_{f,\mathnormal{#1}}}}

\def\vv#1{\mathsf{v_{#1}}}
\def\vV{{{\mathbb V}}}

\def\alg#1{\mathbf{#1}}

\def\simf#1{\operatorname{\sim_{#1}}}

\def\itm#1#2{{\rm{(${#1}_{#2}$)}}}

\def\mpc/{$\mu^\#_{pc}$}

\def\sp{\operatorname{Sp}}

\def\Q{\mathbb Q}
\def\Z{\mathbb Z}

\section{Introduction}
\label{sec:intro}
A \term{universal algebra} or, briefly, \term{algebra}  is a set $X$ with a set of operations; an $n$-ary \term{operation} is a mapping from $X^n$ to $X$.
If $X$ has a topology and the operations on $X$ are continuous, then $X$ is a \term{topological algebra}\footnote{This is the definition for algebras with a finite number of operations. The general definition is given in Section \ref{sec:prel:topalg}.}.
If the operations on $X$ are separately continuous, then $X$ is called a \term{semitopological algebra}.

The paper examines the possibility of extending the operations of a topological algebra to its Stone–Čech compactification and factorization of continuous functions through homomorphisms to metrizable algebras.
Most attention is paid to pseudocompact and compact algebras.

The main results are formulated in Section \ref{sec:main}.

By a space we mean a Tychonoff space, unless a separation axiom  is explicitly stated.

\section{Definitions and background information}
\label{sec:prel}

Terminology and notation on universal algebra can be found in \cite{BurrisSankappanavar1981,Bergman2011}.
Topological and semitopological algebras with continuous signature were considered in \cite{ChobanDumitrashku1981,Choban1986,ChobanChiriac2013}.
Topological terminology and notation are the same as in \cite{EngelkingBookGT,HandbookOfTopology1984}.

Non-negative integers are denoted as $\om$.

\subsection{Algebras}
\label{sec:prel:alg}

A set $E$ being the union of pairwise disjoint sets $E_n$ for $\nom$ is called \term{signature}.
A signature $E$ is used for indexing algebra operations and $E_n$ is used for indexing $n$-ary operations.
Some $E_n$ may be empty. We denote
\[
\sp E = \set{\nom: E_n\neq\es}.
\]
The signature $E$ together with the function $E\to \om$ under  which the preimage of each $n\in\om$ is the set $E_n$ is also called \term{type} or \term{language}.

Let $X$ be a set, and let $\op Xn\: E_n \times X^n \to X$ for $\nom$. The pair $(X,\sqn{\op Xn})$ is called an  \term{algebra with signature $E$} or an \term{$E$-algebra}. The mappings $\op Xn$ are called the \term{algebraical structure} of the algebra $X$.
Each symbol $t\in E_n$ is associated with the $n$-ary operation defined as
\[
t^X \: X^n \to X,\ (x_1,x_2,...,x_n) \mapsto \op Xn(t,x_1,x_2,...,x_n).
\]
The zero degree $X^0$ of a set $X$ is the singleton $\sset\es$.
Identifying $X\times{\sset\es}$ with $X$, we assume that the mapping $\op X0$ maps $E_0$ to $X$ and, for  each $k\in E_0$, associate the nullary operation $k^X$  with an element of $X$, that is, $k^X\in X$.
We call nullary operations \term{constants}.

The mapping $\ph$ from an $E$-algebra $X$ to an $E$-algebra $Y$ is called a \term{homomorphism} if
\[
\ph(t^X(x_1,x_2,...,x_n))=t^Y(\ph(x_1),\ph(x_2),...,\ph(x_n))
\]
for $\nom$,   $t\in E_n$, and $x_1,x_2,...,x_n\in X$. A subset $Y\subset X$ is called a \term{subalgebra} of $X$ if $t^X(x_1,x_2,...,x_n)\in Y$
for $\nom$,   $t\in E_n$, and $x_1,x_2,...,x_n\in Y$.
The subalgebra $Y$ has a natural algebraic structure: $\op Yn=\restr{\op Xn}{E_n\times Y^n}$ for all $\nom$.
An equivalence relation $\sim$ on $X$ is called a \term{congruence} if
\[
t^X(x_1,x_2,...,x_n) \sim t^X(y_1,y_2,...,y_n)
\]
for any $\nom$, $t\in E_n$, and  $x_1,x_2,...,x_n,y_1,y_2,...,y_n\in X$ such that $x_i\sim y_i$ for $i=1,2,...,n$.
Given an equivalence relation $\sim$  on $X$, on the quotient set $X/{\sim}$ there is a (unique) $E$-algebra structure with respect to which the quotient map $X\to X/{\sim}$ is a homomorphism if and only if $\sim$ is a congruence.
A surjective mapping $\ph\: X\to Y$ corresponds to the equivalence relation $\simf\ph$ on $X$ defined by setting $x\simf\ph y$ if $\ph(x)=\ph(y)$.
A surjective map $\ph\: X\to Y$ is a homomorphism if and only if $\simf\ph$ is a congruence and the bijective map $X/{\simf\ph}\to Y$ is a homomorphism.

When this does not cause confusion, we sometimes  omit the superscript $X$.
If a signature $E$ is finite and $E=\sset{t_1,t_2,...,t_m}$, then the $E$-algebra $X$ will also be denoted as $(X,t_1,t_2,... ,t_m)$. 

Many algebraic objects are algebras, for example, groups. There are three operations on a group 
$G$: 
a binary operation $\gm\: G\times G\to G,\ (g,h)\mapsto gh$ (multiplication); 
a unary operation $\gi\: G\to G,\ g\mapsto g^{-1}$  (inversion); 
and a nullary operation $e$ (the identity element of $G$).
Thus, the group $G$ is an algebra $(G,\gm,\gi,e)$.

A ternary operation $\mu\:X^3\to X$ on a set $X$ is called a \term{Mal'cev operation} if
$\mu$ satisfies the identity
\begin{equation}\label{eq:malc}
 \mu(y,y,x)\approx\mu(x,y,y)\approx x,
\end{equation}
that is, $\mu(y,y,x)=\mu(x,y,y)=x$ for $x,y\in X$.

We call an algebra $(X,\mu)$ a \term{universal Mal'cev algebra} or, briefly, a \term{Mal'cev algebra}. 
\footnote{The term `Mal'cev algebra' is also used to refer to some linear algebras over a field.}

\subsection{Topological and semitopological algebras}
\label{sec:prel:topalg}

Suppose that a signature $E$  is endowed with a topology with respect to which the sets $E_n$ are clopen. We call a signature with  such a topology \term{continuous}. Let $X$ be an $E$-algebra with a topology.
The algebra $X$ is called a \term{(semi)topological algebra} if the mappings $\op Xn$ are (separately) continuous \cite{choban1992,Choban1993,ChobanChiriac2013}.

Suppose that $E$ is a discrete space. Then $X$ is a topological algebra if and only if $t^X\: X^n\to X$ is a continuous mapping for all $n\in\om$ and any symbol $t\in E_n$. This is how a topological algebra was defined in the works \cite{malcev1954} of Mal'cev  and \cite{Taylor1977} of Taylor. We can say that  topological algebras in the sense of Mal'cev and Taylor are topological algebras with a discrete signature $E$ in the sense of Choban \cite{ChobanChiriac2013}.
An algebra $X$ with a topology is a semitopological algebra if and only if $t^X\: X^n\to X$ is a separately continuous mapping for each $n\in\om$ and any symbol $t\in E_n$. In this way, a quasi-topological algebra was defined in \cite{SipachevaSolonkov2023}.

A group $G=(G,\gm,\gi,e)$ with a topology is called
\begin{itemize}
\item
\term{topological} if the mappings $\gm$ and $\gi$ are continuous, that is, the algebra $(G,\gm,\gi,e)$ is topological;
\item
\term{paratopological} if the mapping $\gm$ is continuous, that is, the algebra $(G,\gm,e)$ is topological;
\item
\term{quasitopological} if the mapping $\gm$ is separately continuous and $\gi$ is continuous, that is, the algebra $(G,\gm,\gi,e)$ is semitopological;
\item
\term{semitopological} if the mapping $\gm$ is separately continuous, that is, the algebra $(G,\gm,e)$ is semitopological.
\end{itemize}

Let $X$ be a topological space with a Mal'cev operation $\mu$. The algebra $(X,\mu)$ is a topological Mal'cev algebra if $\mu$ is continuous. 
A topological space is a \term{Mal’cev space} if it admits a continuous Mal’cev operation, that is, if $X$ is homeomorphic to some  topological Mal’cev algebra. Any group $G$ has a natural Mal'cev operation: $\mu(x,y,z)=xy^{-1}z$. If $r\: G\to G$ is a retraction, then there is a natural Mal'cev operation on $X=r(G)$: $\mu(x,y,z)=r(xy^{-1}z)$.
Consequently, a retract of a topological group is a  Mal'cev space.
A pseudocompact Mal'cev  space is a retract of a topological group \cite{Sipacheva1991,ReznichenkoUspenskij1998}. There exists a completely regular Mal'cev space which is not a retract of a topological group \cite{grs1997}.

The algebra $(X,\mu)$ is a semitopological Mal'cev algebra if $\mu$ is separately continuous.
A topological space is a \term{quasi-Mal’cev} space if it admits a separately continuous Mal’cev operation, that is, if $X$ is homeomorphic to some semitopological Mal’cev algebra. 
A space is quasi-Mal’cev if and only if it is a retract of a quasitopological group \cite{SipachevaSolonkov2023}.

The most interesting and studied algebras are those with finite signature. 
With the help of algebras with an infinite signature it is possible to describe more algebraic structures, for example, vector spaces. Using algebras with a continuous signature, we can describe topological vector spaces.

\subsection{Terms, identities, and equational classes}

Let $V$ be a \term {set of variables} that does not intersect the signature $E$.
The set $\Tm(E,V)$ of \term{terms of signature $E$ over $V$} is the smallest set such that it contains $V\cup E_0$
and if $m\in\om$, $f\in E_m$, and $t_1,t_2,...,t_m\in \Tm(E,V)$, then $(f,t_1,t_2,... ,t_m)\in \Tm(E,V)$.
A term $(f,t_1,t_2,...,t_m)$ is also written as $f(t_1,t_2,...,t_m)$.
For binary operations, such as the $+$ operation, the term $(+,x,y)$ can be written as $+(x,y)$ and $x+y$.

For a term $t$, let $\vars t$ denote the \term {set of variables} in $t$.
For $v\in V$, we denote by $\Om(t)(v)$ \term {the number of occurrences of the variable} $v$ in $t$.
If $X$ is an $E$-algebra, $\vars t\subset R\subset V$, and $\bar x=\sq{x_v}{v\in R} \in X^R$, then we define $t^X(\bar x)\in X$ as the result of substituting the values $x_v$ into $t$ instead of the variables $v\in \vars t$ and then calculating the obtained expression. Let us define the introduced concepts more formally, by induction.

Let $t\in\Tm(E,V)$, $v\in V$, $\vars t\subset R\subset V$, and $\bar x=\sq{x_v}{v\in R}\in X^R$.

We put
\begin{description}
\item[$t\in E_0$:\ ]
$\vars t=\es$, $\Om(t)(v)=0$, and $t^X(\bar x)=\op X0(t)=t^X$;
\item[$t\in V$:\ ] $\vars t=\sset t$,
\[
\Om(t)(v) = \begin{cases}
1,& t=v,
\\
0,& t\neq v,
\end{cases}
\]
and $t^X(\bar x)=x_t$;
\item[$t\notin E_0\cup V$:\ ] if $m\in\om$, $f\in E_m$ $t_1,t_2,...,t_m\in \Tm(E,V)$, and $t=f(t_1,t_2,...,t_m)$, then
\begin{align*}
\vars t&=\bigcup_{i=1}^m \vars t_i,
\quad \Om(t)(v)=\sum_{i=1}^m \Om(t_i)(v),
\\
t^X(\bar x)&=\op Xm(f,t_1^X(\bar x),t_2^X(\bar x),...,t_m^X(\bar x))=f^X(t_1^X(\bar x),t_2^X(\bar x),...,t_m^X(\bar x)).
\end{align*}
\end{description}

Let $\fV_E$ denote the category of all $E$-algebras without topological structure.
Morphisms in this category are homomorphisms.
We denote the category of all topological (semitopological) $E$-algebras by $\fV_E^T$ ($\fV_E^{ST}$).
Morphisms in $\fV_E^T$ and $\fV_E^{ST}$  are continuous homomorphisms.

A subcategory $\cP$ of the category $\fV_E$ is called a \term{variety} if $\cP$ is closed under products, subalgebras, and homomorphic images.

A pair of terms $p,q\in \Tm(E,V)$, written as $p\approx q$, is called an \term{identity}. They say that \term{an identity $p\approx q$ holds for an $E$-algebra $X$} if $p^X(\bar x)=q^X(\bar x)$ for all $\bar x\in X^R$, where $R={\vars p\cup \vars q}$.

Let us fix a countable set
\[
\vV=\sset{\vv 1,\vv 2, ...}
\]
of variables. We denote $\Tm(E)=\Tm(E,\vV)$.

Let $\Xi\subset \Tm(E)\times \Tm(E)$ be a set of identities.
We denote
\begin{align*}
\fV_E[\Xi]=\{ X\in \fV_E:& \text{ for all } p\approx q\in\Xi
\\
&\text{the identity }p\approx q\text{ holds for } X\}.
\end{align*}
We also denote by $\fV_E^T[\Xi]$ ($\fV_E^{ST}[\Xi]$)  the class of topological (semitopological) $E$-algebras in $\fV_E[\Xi]$.
The class of $E$-algebras of the form $\fV_E[\Xi]$ is called an \term{equational class}.\footnote{Such classes are also called primitive classes.}.
Birkoff's theorem \cite[Theorem 11.9]{BurrisSankappanavar1981} states that a subcategory $\cP$ of the category of all $E$-algebras $\fV_E$ is a variety if and only if $\cP$ is an equational class.

For example, groups with signature $\{e,{}^{-1},\cdot\}=\sset{e,\gi,\gm}$ are defined by the identities $x e\approx e x\approx x$, $x x^{-1}\approx  x^{-1}x\approx e $, and $x(yz)\approx (xy)z$.  Abelian groups in additive notation with signature\footnote{Here `$-$' is the unary operation of taking the opposite element} $\{0,-,+\}$   are defined by the identities $0+x\approx x+0=x$ , $x+(-x)\approx (-x)+x\approx 0$, $(x+y)+z\approx x+(y+z)$, and $x+y\approx y+x$.

\subsection{Translation semigroup}
\label{sec:prel:transl}

Let $X$ be an $E$-algebra.
For $n\in\sp E$, $n>0$, $i\in\sset{1,2,...,n}$, $f\in E_n$, and 
\[
\bar x=(x_1, x_2,...,x_{i-1},x_{i+1},...,x_n)\in X^{n-1},
\]
we denote
\[
T_{f,i,\bar x}\: X\to X,\ x\mapsto f^X(x_1,x_2,...,x_{i-1},x,x_{i+1},. ..,x_n).
\]
Mappings of the form $T_{f,i,\bar x}$ are called \term{principal translations} (or \term{elementary translations}).
Let $S_1(X)$ denote the set of all principal translation  in an $E$-algebra $X$.
Let $e$ denote the identity mapping $\id_X$ of the space $X$ onto itself,
and let $S(X)$ be the subsemigroup in $X^X$ generated by $S_1(X)\cup \sset e$. The elements of the semigroup $S(X)$ are called \term{translations}. Any translation is a composition of principal translations.

\begin{proposition}[{\cite[Theorem 4.16]{Bergman2011}}]\label{p:prel:1}
Let $X$ be an $E$-algebra. An equivalence relation $\sim$ on $X$ is a congruence if and only if the following condition is satisfied: if $x\sim y$ and $\si\in S_1(X)$, then $\si(x)\sim \si(y)$.
\end{proposition}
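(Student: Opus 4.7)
The plan is to verify both implications directly from the definitions: the forward direction is an immediate unpacking of the congruence condition, and the converse requires a routine interpolation argument that changes one coordinate at a time.

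For the ``only if'' direction, assume $\sim$ is a congruence, and let $\sigma = T_{f,i,\bar x} \in S_1(X)$ with $f \in E_n$, $1 \le i \le n$, and $\bar x = (x_1,\dots,x_{i-1},x_{i+1},\dots,x_n)$. Given $x \sim y$, I would substitute $x$ and $y$ in position $i$ and use reflexivity $x_j \sim x_j$ in the remaining coordinates; the congruence property then yields
$$\sigma(x) = f^X(x_1,\dots,x_{i-1},x,x_{i+1},\dots,x_n) \sim f^X(x_1,\dots,x_{i-1},y,x_{i+1},\dots,x_n) = \sigma(y)$$
at once.

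For the ``if'' direction, assume every principal translation preserves $\sim$, and fix $n \in \sp E$ with $n > 0$, $f \in E_n$, together with elements satisfying $x_i \sim y_i$ for $i = 1,\dots,n$. I would form the interpolating tuples $z^{(k)} = (y_1,\dots,y_k,x_{k+1},\dots,x_n)$ for $0 \le k \le n$, and observe that for each $k \ge 1$ the principal translation $\sigma_k = T_{f,k,\bar w_k}$ with $\bar w_k = (y_1,\dots,y_{k-1},x_{k+1},\dots,x_n)$ sends $x_k$ to $f^X(z^{(k-1)})$ and $y_k$ to $f^X(z^{(k)})$. The hypothesis applied to $x_k \sim y_k$ then gives $f^X(z^{(k-1)}) \sim f^X(z^{(k)})$, and transitivity telescopes these $n$ relations into $f^X(x_1,\dots,x_n) \sim f^X(y_1,\dots,y_n)$. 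The nullary case reduces to the reflexive statement $f^X \sim f^X$.

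I do not anticipate any genuine obstacle, since both directions are formal manipulations with the definitions; the only bookkeeping needed is the construction of the interpolating chain of tuples and the separate treatment of constants.
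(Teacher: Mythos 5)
Your proof is correct: the forward direction is the congruence condition specialized to one varying coordinate, and the backward direction's one-coordinate-at-a-time interpolation with transitivity (plus the trivial nullary case) is exactly the standard argument. The paper itself gives no proof, only the citation to Bergman's Theorem 4.16, and your argument is precisely the classical proof that citation refers to.
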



\section{Main results}
\label{sec:main}

\subsection{Extension of operations on $\be X$}

We are interested in when (separately) continuous operations $\Psi$ of an $E$-algebra $X$ extend to (separately) continuous operations $\sqn{\op Xn}$ on $\bt X$, that is, when the mapping $\op Xn$ extend to 
a (separately) continuous mapping $\op{\bt X}n\: E_n\times (\bt X)^n\to \bt X$ for $n\in\sp E$.

Dugundji compacta were introduced by Pe\l czy\'nski in \cite{Pelczynski1968}. 
A compact space $X$ belongs to the class $\AEx(0)$ of \term{absolute extensors in dimension
zero} if,  given a closed subset $Y$ of a Cantor cube $\sset{0,1}^A$, every
continuous mapping from $Y$ to $X$ extends to a continuous mapping from $\sset{0,1}^A$ to
$X$. In \cite{Haydon1974}, Haydon proved that the compact spaces of class $\AEx(0)$ are exactly the Dugundji compacta.
Often compact spaces encountered in  topological algebra are Dugundji compacta \cite{Uspenskii1989}.

\begin{theorem}\label{t:main:1} Let $G$ be a group with a topology. The following conditions are equivalent:
\begin{enumerate}
\item
$G$ is a topological group and the operations of $G$ extend to continuous group operations on $\bt G$, that is, $\bt G$ is a topological group and $G$ is embedded in $\bt G$  as a subgroup;
\item
$G$ is a pseudocompact topological group;
\item
$G$ is a topological group and $G^\tau$ is pseudocompact for any cardinal $\tau$;
\item
$G$ is a topological group and $\bt G$ is a Dugundji compactum;
\item
$G$ is a pseudocompact paratopological group;
\item
$G$ is a pseudocompact semitopological group and the multiplication of $G$ extends to a separately continuous map on $\bt G\times \bt G$.
\end{enumerate}
\end{theorem}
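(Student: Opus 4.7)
The plan is to close a cycle among the ``topological group'' statements (1)--(4) and then graft (5) and (6) onto it. The easy implications are $(1) \Rightarrow (2)$ (since $G$ is dense in the compact Hausdorff space $\bt G$), $(1) \Rightarrow (6)$ (joint continuity implies separate continuity), and $(2) \Rightarrow (5)$ (every topological group is paratopological). For $(5) \Rightarrow (2)$, I would invoke the theorem of Ravsky that a pseudocompact paratopological group is a topological group.

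For the main cycle among (2)--(4), the implication $(2) \Rightarrow (3)$ is the classical Comfort--Ross theorem on pseudocompactness of arbitrary powers of a pseudocompact topological group. For $(3) \Rightarrow (4)$, I would use the Comfort--Ross/Uspenskii circle of results showing that the Stone--\v{C}ech compactification of a pseudocompact topological group coincides with its Raikov completion and is a Dugundji compactum (one embeds $G$ into a product of second countable groups and pushes the Dugundji extension property through this embedding). For $(4) \Rightarrow (1)$, the Dugundji hypothesis makes $G$ pseudocompact as a dense subspace of a compactum; Comfort--Ross then gives $G\times G$ pseudocompact, whence $\bt(G\times G)=\bt G\times \bt G$, and the continuous map $\gm\colon G\times G\to G\hookrightarrow \bt G$ extends by the universal property of $\bt$ to a continuous map $\bt G\times \bt G\to \bt G$; inversion extends analogously, and the group identities transfer by density.

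The principal obstacle is $(6) \Rightarrow (2)$, after which the previously established cycle delivers the rest. Here we are given only that $G$ is a pseudocompact semitopological group whose multiplication extends separately continuously to $\bt G\times \bt G$; no extension of inversion is assumed and $\bt G$ is not a priori a semigroup, let alone a group. My plan is to treat $(\bt G,\gm)$ as a compact semitopological magma in which $G$ sits as a dense subgroup, then invoke Namioka--Ellis--Lawson type machinery: Namioka's theorem produces a dense $G_\delta$ subset of $\bt G\times \bt G$ on which the extended multiplication is jointly continuous, and Ellis-style results on compact semitopological groups should force $\bt G$ itself to be a topological group (hence automatically yield extension of inversion). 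Homogeneity then promotes separate to joint continuity throughout $G\times G$, giving (2). The two delicate points on which I expect to spend the bulk of the effort are (a) verifying associativity of the extended multiplication on $\bt G$ --- separate continuity of the binary extension does not automatically transport the ternary identity $(xy)z = x(yz)$ across the boundary, so one would extend one variable at a time using Proposition~1 and the translation semigroup machinery from Section~\ref{sec:prel:transl} --- and (b) establishing the existence of inverses for boundary points of $\bt G$, which is what upgrades the compact semitopological semigroup to a topological group.
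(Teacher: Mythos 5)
Your outer scaffolding matches the paper's in outline (the paper simply cites Comfort--Ross for the equivalence of (1)--(3), Pasynkov/Uspenskii for the Dugundji facts in (1)$\leftrightarrow$(4), and \cite{Reznichenko1994} for (2)$\leftrightarrow$(5)$\leftrightarrow$(6)), but two of your steps have genuine problems. First, your justification of $(1)\Rightarrow(2)$ --- ``since $G$ is dense in the compact Hausdorff space $\bt G$'' --- proves nothing: \emph{every} Tychonoff space is dense in its Stone--\v{C}ech compactification, and a dense subgroup of a compact topological group need not be pseudocompact. The implication is a nontrivial part of the Comfort--Ross theorem (it uses that the compactification in question is $\bt G$, e.g.\ via $G_\delta$-density/$C$-embedding arguments), so you must either cite it as such or supply that argument. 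Similarly, in $(4)\Rightarrow(1)$ the phrase ``the Dugundji hypothesis makes $G$ pseudocompact as a dense subspace of a compactum'' conflates two things; pseudocompactness here is Uspenskii's result that $\bt X$ Dugundji forces $X$ pseudocompact, which again needs to be cited or proved, not inferred from density.

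Second, and more seriously, the whole weight of the theorem sits on $(6)\Rightarrow(2)$, and your plan leaves its crux unresolved. Of your two ``delicate points,'' (a) associativity is actually the easy one: the two triple products are separately continuous on $(\bt G)^3$ and agree on the dense set $G^3$, so they coincide by Proposition~\ref{p:main:e1} (this is exactly Corollary~\ref{c:main:main10}). The real obstruction is (b): Ellis--Lawson type theorems (a compact \emph{semitopological group} is a topological group) presuppose that $\bt G$ is already a group algebraically, which is precisely what you do not have --- a compact semitopological semigroup with a dense subgroup need not be a group (the paper's one-point compactification of $\Z$ is the standard warning; pseudocompactness of $G$ is what must be exploited, and your sketch never does). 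A Namioka dense-$G_\delta$ of joint continuity does not by itself produce inverses at boundary points, nor does ``homogeneity'' of $\bt G$ (which is not known to be homogeneous). The argument the paper relies on \cite{Reznichenko1994} runs in the opposite direction: Grothendieck-type extension theorems for separately continuous functions on products of pseudocompact spaces are used to upgrade continuity on $G$ itself (joint continuity of multiplication, then continuity of inversion), after which Comfort--Ross gives the compact group structure on $\bt G$. As written, your proposal reduces the hard implication to a hope, so the proof is incomplete; likewise $(5)\Rightarrow(2)$ (continuity of inversion in pseudocompact paratopological groups) is a cited theorem (due to Reznichenko; Ravsky's results are variants), which is fine to quote but should be attributed correctly.
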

\begin{proof}
The  equivalence of the first three conditions is the Comfort--Ross theorem \cite[Theorem 1.4, Theorem 4.1]{ComfortRoss1966}.
If $\bt G$ is a Dugundji compactum, then $G$ is pseudocompact \cite[Proposition 4]{Uspenskii1989},  and hence (4) $\rarr$ (3). Compact topological groups are Dugundji compacta \cite{Pasynkov1976,Uspenskii1989}, and hence (1) $\rarr$ (4).
The equivalences of (2) $\lrarr$ (5) $\lrarr$ (6) were proved in \cite{Reznichenko1994}.
\end{proof}

\begin{question}\label{q:main:1}
Let $G$ be a topological group such that the  multiplication of $G$ extends to a separately continuous operation on $\be G$. Is it true that $G$ is pseudocompact?
\end{question}

Let ($P_1$) be the condition of Problem \ref{q:main:1}.
Note that the following conjectures are equivalent:
\begin{itemize}
\item
if ($P_1$), then $G$ is pseudocompact;
\item
if ($P_1$), then $\be G$ is a topological group;
\item
if $G$ is a semitopological group and multiplication extends to a separately continuous operation on $\be G$, then $G$ is pseudocompact and $\be G$ is a topological group.
\end{itemize}

\begin{theorem}\label{t:main:2} Let $X$ be a topological Mal'cev algebra. The following conditions are equivalent:
\begin{enumerate}
\item
the Mal'cev operation on $X$ extends to a continuous Mal'cev operation on $\bt X$, that is, there is the structure of a topological Mal'cev algebra on $\bt X$ and $X$ is a subalgebra of $\bt X$;
\item
The Mal'cev operation on $X$ extends to a continuous operation on $\bt X$;
\item
$\bt X$ is a Dugundji compactum;
\item
$X$ is pseudocompact;
\item
$X^\tau$ is pseudocompact for any cardinal $\tau$.
\end{enumerate}
\end{theorem}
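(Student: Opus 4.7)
The plan is to establish the cycle $(1)\Rightarrow(3)\Rightarrow(4)\Rightarrow(5)\Rightarrow(2)\Rightarrow(1)$, mirroring the structure of the proof of Theorem~\ref{t:main:1} and using the representation of pseudocompact Mal'cev spaces as retracts of pseudocompact topological groups.

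For $(2)\Rightarrow(1)$ I would show that any continuous extension $\tilde\mu\colon(\bt X)^3\to\bt X$ of $\mu$ automatically satisfies the Mal'cev identities. Fix $y_0\in X$: the continuous map $a\mapsto\tilde\mu(y_0,y_0,a)$ agrees with the identity on the dense subset $X\subset\bt X$, hence equals the identity on all of $\bt X$. Then for each fixed $a\in\bt X$ the continuous map $y\mapsto\tilde\mu(y,y,a)$ is constantly $a$ on $X$, so by continuity and density it is constantly $a$ on $\bt X$, giving $\tilde\mu(y,y,a)=a$ identically. The other Mal'cev identity follows by the same argument with the first and third coordinates swapped.

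For $(4)\Rightarrow(5)$: by the Reznichenko--Uspenskij / Sipacheva representation theorem (in the form producing a pseudocompact receiving group), write $X$ as a retract of a pseudocompact topological group $G$. Comfort--Ross then gives that $G^\tau$ is pseudocompact for every cardinal $\tau$, and $X^\tau$, being a retract of $G^\tau$, is pseudocompact. For $(5)\Rightarrow(2)$, Glicksberg's theorem applied to the pseudocompact cube $X^3$ yields $\bt(X^3)=(\bt X)^3$, so the continuous map $\mu\colon X^3\to\bt X$ extends uniquely to a continuous map $(\bt X)^3\to\bt X$.

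For $(1)\Rightarrow(3)$: if $\bt X$ carries the structure of a compact topological Mal'cev algebra, then it is in particular a compact Mal'cev space, and by Uspenskii's theorem \cite{Uspenskii1989} every compact Mal'cev space is a Dugundji compactum. The main obstacle is the remaining implication $(3)\Rightarrow(4)$, the Mal'cev analog of \cite[Proposition~4]{Uspenskii1989}. The expected strategy is to exploit the inverse-limit description of the Dugundji compactum $\bt X$ (metrizable factors with open bonding maps) together with the elementary $\mu$-translations on $X$ (Section~\ref{sec:prel:transl}) in place of the group translations used in the original group-theoretic argument, in order to rule out an infinite discrete family of open sets in $X$.
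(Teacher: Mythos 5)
Your steps $(2)\Rightarrow(1)$, $(1)\Rightarrow(3)$ and $(5)\Rightarrow(2)$ are fine: the density argument for the Mal'cev identities is exactly what lies behind the paper's citation of \cite[Proposition 4.4]{Reznichenko2024tp}, $(1)\Rightarrow(3)$ is Uspenskij's theorem on compact Mal'cev spaces, and $(5)\Rightarrow(2)$ via Glicksberg is correct (using that both $X^2$ and $X^3$ are pseudocompact, so $\bt(X^3)=(\bt X)^3$). The genuine trouble lies in $(4)\Rightarrow(5)$ and $(3)\Rightarrow(4)$ --- precisely the implications whose content the paper imports wholesale from \cite{ReznichenkoUspenskij1998}. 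For $(4)\Rightarrow(5)$ you invoke a representation of a pseudocompact Mal'cev space as a retract of a \emph{pseudocompact} topological group. No such form of the theorem is available: the result of \cite{Sipacheva1991,ReznichenkoUspenskij1998} only yields a retraction of \emph{some} topological group onto $X$, with no pseudocompactness of the ambient group; in the known constructions the ambient group is built from the free topological group $F(X)$, which is never pseudocompact (the word-length homomorphism maps it continuously onto the infinite discrete group $\Z$). Whether the ambient group can be chosen compact or pseudocompact --- essentially, whether Mal'cev (or Dugundji) compacta are retracts of compact groups --- is a known open problem, not a usable tool. Without pseudocompactness of $G$, Comfort--Ross tells you nothing about $G^\tau$, so the retract argument for power pseudocompactness collapses.

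The implication $(3)\Rightarrow(4)$ you do not prove at all: you only describe an ``expected strategy'' in which $\mu$-translations replace group translations. Besides being incomplete, the strategy is aimed at the wrong target: the statement behind this implication (cf. \cite[Proposition 4]{Uspenskii1989}, already used in the proof of Theorem \ref{t:main:1}) is that $\bt X$ being a Dugundji compactum forces $X$ to be pseudocompact, and the known proof is not a translation argument; in the present paper this implication, together with $(4)\Leftrightarrow(5)$ and $(5)\Rightarrow(1)$, is simply quoted from \cite{ReznichenkoUspenskij1998}. So as written your proposal establishes $(1)\Leftrightarrow(2)$, $(1)\Rightarrow(3)$ and $(5)\Rightarrow(2)$, but the cycle is broken at $(3)\Rightarrow(4)$ and $(4)\Rightarrow(5)$; a self-contained proof of those needs the actual Reznichenko--Uspenskij machinery (extension of the operation to $\bt X$ and Grothendieck-type arguments for power pseudocompactness), not a pseudocompact enveloping group.
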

\begin{proof}
In \cite{ReznichenkoUspenskij1998} it was proved that (3) $\lrarr$ (4) $\lrarr$ (5) $\rarr$ (1). Since a compact topological Mal'cev algebra is a Dugundji compactum \cite[Corollary 6]{Uspenskii1989}, it follows that (1) $\rarr$ (3).
Obviously, (1) $\rarr$ (2).
It is easy to see \cite[Proposition 4.4]{Reznichenko2024tp} that if in a topological algebra with a ternary operation some dense subalgebra is Mal'cev, then the entire algebra is Mal'cev. Therefore, (2) $\rarr$ (1).
\end{proof}

\begin{question}\label{q:main:2}
Let $X$ be a topological Mal'cev algebra, and let the Mal'cev operation extend to a separately continuous operation on $\be X$. Is it true that $X$ is pseudocompact?
\end{question}

Let ($P_2$) be the condition of Problem \ref{q:main:2}.
Note that
the following conjectures are equivalent:
\begin{itemize}
\item
if ($P_2$), then $X$ is pseudocompact;
\item
if ($P_2$), then $\bt X$ is Dugundji;
\item
if ($P_2$), then the operation on $\be X$ is a continuous Mal'cev operation, $X$ is pseudocompact, and $\bt X$ is Dugundji.
\end{itemize}

A mapping $f \: X \to Y$ is called quasicontinuous if for every point $x \in X$, any neighborhood $O$ of $f(x)$, and any neighborhood $W$ of $x$, there exists a non-empty open $U \subset W$ such that $f(U)\subset O$.

Suppose that $\set{X_\al: \al\in A}$ is a family of sets, $Y$ is a set, $X=\prod_{\al\in A} X_\al$, $\F\: X \to Y$ is a mapping, $B\subset A$, and $\bx = \sq{x_\al}{\al \in A\setminus B}\in \prod_{\al\in A\setminus B} X_\al$. Let us define the mapping
\[
r(\F,X,\bx)\: \prod_{\al\in B} X_\al\to Y,\ \sq{x_\al}{\al \in B}\mapsto \F(\sq{ x_\al}{\al \in A}).
\]

\begin {definition} \label{d:fsv:1}
Given a set $A$  and a family of spaces $ \set {X_ \al: \al \in A} $, let $ X = \prod _{\al \in A}X_\al$, and let $Y$ be a space.
%
Suppose given a map $ \F \: X \to Y $ and a positive integer $ n $.
\begin{itemize}
\item
  The map $ \F $ is {\it $ n $-separately continuous} iff $ r (\F , X, \bx)$ is continuous for each $ B \subset A $ with $ | B | \le n $ and any $ \bx \in \prod _ {\al \in A \setminus B} X_ \al $ \cite[Definition 3.25]{ReznichenkoUspenskij1998}.
\item
The map $ \F $ is {\it $ n $-$ \be $-extendable} iff $ g = r (\F , X, \bx)$ extends to a separately continuous map $ \hat g: \prod _ {\al \in B} \be X_ \al \to \be Y $ for each $ B \subset A $ with $ | B | \le n $ and any $ \bx \in \prod _ {\al \in A \setminus B} X_ \al $ \cite[Definition 1]{Reznichenko2022}.
\item
The map $ \F $ is {\it $n$-quasicontinuous} iff $ r (\F , X, \bx)$ is quasicontinuous for each $ B \subset A $ with $ | B | \le n $ and any $ \bx \in \prod _ {\al \in A \setminus B} X_ \al $ \cite[Definition 1]{Reznichenko2022-2}.
\end{itemize}
\end{definition}

Separately continuous maps are exactly $1$-separately continuous maps.

\begin{theorem}\label{t:main:3} Let $(X,M)$ be a semitopological Mal'cev algebra.
The following conditions are equivalent:
\begin{enumerate}
\item
the Mal'cev operation $M$ on $X$ extends to a separately continuous Mal'cev operation on $\bt X$, that is,  there is the structure of a semitopological Mal'cev algebra on $\bt X$ and $X$ is a subalgebra of $\bt X$;
\item
the space $X$ is pseudocompact and the Mal'cev operation on $X$ extends to a separately continuous operation on $\bt X$;
\item
the space $X$ is pseudocompact and the Mal'cev operation $M$ is $2$-$\be$-extendable;
\item
the space $X$ is pseudocompact and the Mal'cev operation $M$ is $2$-quasicontinuous.
\end{enumerate}
If {\rm (1)} holds, then $\bt X$ is a Dugundji compactum and $X^\tau$ is pseudocompact for any $\tau$.
\end{theorem}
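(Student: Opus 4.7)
I would try to establish the cycle $(1)\rarr (2)\rarr (3)\rarr (4)\rarr (1)$, and then deduce the ``If {\rm (1)} holds'' clause from (1) separately, using the same Dugundji and product-pseudocompactness machinery that underlies Theorem \ref{t:main:2}.

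\textbf{The easy links $(1)\rarr (2)\rarr (3)$.} From (1), the extension is in particular a separately continuous $\hat M\:(\bt X)^3\to \bt X$, giving the second half of (2). The pseudocompactness of $X$ should follow from the fact that $X$ is a dense subalgebra of the compact semitopological Mal'cev algebra $\bt X$: using the extended separately continuous $\hat M$, one argues that any continuous unbounded $f\:X\to \R$ leads to a contradiction — either by the Glicksberg/Comfort--Ross argument invoked already in Theorem \ref{t:main:1}, or directly via \cite{ReznichenkoUspenskij1998}. For $(2)\rarr (3)$, I would simply unpack Definition \ref{d:fsv:1}: once a full separately continuous extension $\hat M$ exists on $(\bt X)^3$, fixing any coordinate in $X$ and restricting the others to $(\bt X)^{|B|}$ for $|B|\le 2$ yields the required separately continuous extensions, so $M$ is $2$-$\be$-extendable.

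\textbf{The two serious links $(3)\rarr (4)$ and $(4)\rarr (1)$.} For $(3)\rarr (4)$ I would invoke the general principle, already exploited in \cite{Reznichenko2022,Reznichenko2022-2}, that on pseudocompact factors a $\be$-extendable map is automatically quasicontinuous, applying it to each $2$-variable restriction of $M$. The real work is $(4)\rarr (1)$: starting from pseudocompactness of $X$ together with the $2$-quasicontinuity of $M$, I would construct the extension coordinatewise. Fixing any two coordinates in $X$, the resulting map $X\to X$ is quasicontinuous on a pseudocompact space and therefore extends continuously to $\bt X\to \bt X$; fixing one coordinate in $\bt X$ and another in $X$ and applying $2$-quasicontinuity plus a density and $\be$-extension step yields an extension of each $2$-variable partial map to $(\bt X)^2\to \bt X$; assembling these gives a separately continuous $\hat M\:(\bt X)^3\to \bt X$. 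The last step is verifying the Mal'cev identities for $\hat M$: the identity $\mu(y,y,x)\approx \mu(x,y,y)\approx x$ holds on the dense subset $X\subset \bt X$, and by separate continuity and the standard density argument (compare \cite[Proposition 4.4]{Reznichenko2024tp}) it extends to all of $\bt X$, giving (1) and simultaneously showing that $X$ sits as a subalgebra.

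\textbf{The last sentence and main obstacle.} Once (1) is available, $\bt X$ is a compact semitopological Mal'cev algebra, so by the Uspenskii-type result for Mal'cev compacta (\cite[Corollary 6]{Uspenskii1989}; compare Theorem \ref{t:main:2}) $\bt X$ is Dugundji. Product-pseudocompactness of $X^\tau$ then follows as in the Comfort--Ross/Uspenskij line: $(\bt X)^\tau$ is Dugundji, $X^\tau$ is dense in it, and the standard Glicksberg-type argument yields $\bt(X^\tau)=(\bt X)^\tau$ and hence pseudocompactness of every power. The main obstacle is, as expected, $(4)\rarr(1)$: turning $2$-quasicontinuity plus pseudocompactness into a genuine separately continuous extension on $(\bt X)^3$ and then pushing the Mal'cev identities through to the extension. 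The delicate point is that while separate continuity gives identity transfer along one coordinate at a time, the identities $\hat M(y,y,x)=x$ and $\hat M(x,y,y)=x$ mix coordinates, so one must argue carefully that plugging $y\in \bt X$ into two coordinates simultaneously is compatible with the coordinatewise extension — this is where the strength of $2$-quasicontinuity, as opposed to mere separate continuity, is needed.
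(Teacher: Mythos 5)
Your overall cycle $(1)\rarr(2)\rarr(3)\rarr(4)\rarr(1)$ is reasonable in outline, and the easy links are fine (in particular, $(2)\rarr(3)$ by restricting the global extension is correct). The paper, however, disposes of the hard links by citation: $(2)\lrarr(3)\lrarr(4)$ is \cite[Theorem 5]{Reznichenko2022-2}, $(1)\lrarr(2)$ is \cite[Theorem 8]{Reznichenko2022-2}, and the final clause comes from the fact that a compact semitopological Mal'cev algebra is Dugundji. Precisely at those points your sketch has genuine gaps rather than arguments. In $(4)\rarr(1)$, the passage from $2$-quasicontinuity plus pseudocompactness to a separately continuous map on all of $(\bt X)^3$ is the main content of the cited theorems: your sketch explains extensions only while at most two coordinates leave $X$, and gives no reason why, once two coordinates already lie in $\bt X\setminus X$, the map is continuous in the remaining variable. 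That step needs a Grothendieck/$C_p$-compactness argument (as in Proposition \ref{p:ext:5} of this paper, or the machinery of \cite{Reznichenko2022-2}), which you do not supply. Also, the intermediate claim that a map which is ``quasicontinuous on a pseudocompact space therefore extends continuously to $\bt X\to\bt X$'' is false as a general principle; the one-variable map you consider does extend, but only because it is actually continuous (the other two coordinates are fixed in $X$).

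The second, more serious error is the verification of the Mal'cev identities for the extension. You invoke ``separate continuity and the standard density argument (compare \cite[Proposition 4.4]{Reznichenko2024tp})'', but that argument is valid only for jointly continuous operations (Theorem \ref{t:main:9}, which is how it is used in Theorem \ref{t:main:2}) or for identities in which every variable occurs at most once (Proposition \ref{p:main:e1}, Theorem \ref{t:main:10}). In $\mu(y,y,x)\approx x$ the variable $y$ occurs twice, and the paper's own example --- the one-point compactification of $\Z$, where $x+(-x)\approx 0$ fails for the separately continuous extension --- shows that density plus separate continuity cannot suffice. The correct argument requires pseudocompactness of $X$ together with the diagonal-continuity result \cite[Proposition 3.12]{ReznichenkoUspenskij1998}, i.e.\ Theorem \ref{t:main:11} and Corollary \ref{c:main:3} here; you flag exactly this difficulty in your last paragraph but do not resolve it. Relatedly, in $(1)\rarr(2)$ pseudocompactness of $X$ does not follow from a ``Glicksberg/Comfort--Ross argument'' (Glicksberg's theorem presupposes pseudocompactness); the route is that $\bt X$ is a compact semitopological Mal'cev algebra, hence Dugundji \cite{ReznichenkoUspenskij1998,Reznichenko2022-2}, hence $X$ is pseudocompact \cite[Proposition 4]{Uspenskii1989} --- and this is also the non-circular way to get $X^\tau$ pseudocompact for all $\tau$ in the final clause, where your appeal to Glicksberg is again circular.
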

\begin{proof}
A compact semitopological Mal'cev algebra is a Dugundji compactum \cite{ReznichenkoUspenskij1998,Reznichenko2022-2}. Consequently, from (1) it follows that $\bt X$ is a Dugundji compactum and $X^\tau$ is pseudocompact for any $\tau$ \cite{Uspenskii1989,ReznichenkoUspenskij1998}.
Then from \cite[Theorem 5]{Reznichenko2022-2} it follows that (2) $\lrarr$ (3) $\lrarr$ (4) and
from \cite[Theorem 8]{Reznichenko2022-2} it follows that (1) $\lrarr$ (2).
\end{proof}

\begin{question}\label{q:main:3}
Let $X$ be a semitopological Mal'cev algebra such that the Mal'cev operation extends to a separately continuous operation on $\be X$. Is it true that $X$ is pseudocompact?
\end{question}

Let ($P_3$) be the condition of Problem \ref{q:main:3}.
Note that
the following conjectures are equivalent:
\begin{itemize}
\item
if ($P_3$), then $X$ is pseudocompact;
\item
if ($P_3$), then $\bt X$ is Dugundji;
\item
if ($P_3$), then the operation on $\be X$ is a separately continuous Mal'cev operation, $X$ is pseudocompact, and $\bt X$ is Dugundji.
\end{itemize}

Let $X$ and $Y$ be spaces. We say that,  the pair $(X, Y)$ is \term{Grotendieck} if for every continuous map $f \: X \to C_p(Y)$ the closure of $f(X)$ in $C_p(Y)$ is compact \cite{Reznichenko1994}.
Here $C_p(X)$ denotes the space of continuous real-valued functions on $X$ with the topology of pointwise convergence. We say that a space $X$ is a \term{Korovin} space if $(X,X)$ is Grotendieck \cite{ReznichenkoTkachenko2024}.
Since $C_p(X)$ contains a closed copy of the real line, any Korovin space is pseudocompact.
A space $Y$ is called \term{\mpc/-complete} \cite{Reznichenko2024gr} or \term{weak $pc$-Grothendieck} \cite{Arhangelskii1997} if $(X,Y)$ is Grothendieck for any pseudocompact $X$.

A space $X$ is \term{Dieudonn\'e complete} if it admits a compatible complete uniformity.
For a space $X$ the \term{Dieudonn\' e completion} $\mu X$ can be defined as the smallest Dieudonn\' e
complete subspace of $\bt X$ containing $X$. If $X$ is pseudocompact, then $\mu X=\bt X$.

\begin{proposition}\label{p:main:1} Let $X$ and $Y$ be spaces.
\begin{enumerate}
\item
For $X$ and $Y$, the following conditions are equivalent:
\begin{enumerate}
\item
$(X,Y)$ is Grotendieck;
\item
every separately continuous function $\F\: X\times Y\to \R$ has a separately continuous extension $\wh\F\: \bt X\times Y\to \R$;
\item
for any space $Z$, 
every separately continuous map $\F\: X\times Y\to \R$ has a separately continuous extension $\wh\F\: \bt X\times Y\to \mu Z$.
\end{enumerate}
\item
For any space $X$, the following conditions are equivalent:
\begin{enumerate}
\item
$X$ is Korovin;
\item
every separately continuous function $\F\: X\times X\to \R$ has a separately continuous extension $\wh\F\: \bt X\times \bt X\to \R$;
\item
for any space $Z$, every separately continuous mapping $\F\: X\times X\to Z$ has a separately continuous extension $\wh\F\: \bt X\times \bt X\to \mu Z$.
\end{enumerate}
\item
Any \mpc/-complete space is Korovin.
Each of the following properties implies that $X$ is \mpc/-complete:
$X$ is $k$-space; $X$ is separable; $X$ has countable tightness; $X$ is countably compact.
\end{enumerate}
\end{proposition}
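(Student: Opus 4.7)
The overall strategy rests on the standard bijective correspondence $\Phi \leftrightarrow \Phi^{\#}$ between separately continuous maps $\Phi\colon X \times Y \to \mathbb{R}$ and continuous maps $\Phi^{\#}\colon X \to C_p(Y)$ given by $\Phi^{\#}(x)(y) = \Phi(x,y)$: separate continuity of $\Phi$ is equivalent to continuity of $\Phi^{\#}$. Each equivalence in the proposition is obtained by translating back and forth through this transpose.

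For part (1), the implication (a) $\Rightarrow$ (b) is immediate: given a separately continuous $\Phi\colon X\times Y\to\mathbb{R}$, the Grothendieck property makes $K := \overline{\Phi^{\#}(X)}$ compact in $C_p(Y)$, so $\Phi^{\#}$ extends by the universal property of $\beta X$ to a continuous $\widehat{\Phi^{\#}}\colon \beta X\to K$, and transposing back gives a separately continuous extension $\widehat{\Phi}\colon \beta X\times Y\to\mathbb{R}$. For (b) $\Rightarrow$ (a), every continuous $f\colon X\to C_p(Y)$ transposes to a separately continuous $\Phi_f$; applying (b) and transposing yields a continuous $\widehat{f}\colon \beta X\to C_p(Y)$, so $\overline{f(X)}\subseteq \widehat{f}(\beta X)$ is compact. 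The equivalence (b) $\Leftrightarrow$ (c) reduces to (b) by taking $Z=\mathbb{R}$ in one direction, and in the other by embedding $Z\hookrightarrow\mathbb{R}^{C(Z)}$ coordinatewise and extending each component $g\circ\Phi$ via (b). The delicate point is to verify that the assembled separately continuous extension $\widehat{\Phi}\colon \beta X\times Y\to\mathbb{R}^{C(Z)}$ actually takes values in $\mu Z$; this follows from the characterization of $\mu Z$ as the largest Dieudonn\'e complete subspace of $\beta Z$ containing $Z$, combined with density of $X\times Y$ in $\beta X\times Y$ and the separate continuity of the extension.

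Part (2) is obtained by iterating (1) in each coordinate. Given a separately continuous $\Phi\colon X\times X\to\mathbb{R}$ with $X$ Korovin, (1) yields an extension $\widetilde{\Phi}\colon \beta X\times X\to\mathbb{R}$. The transpose $\widetilde{\Phi}^{T}\colon X\to C_p(\beta X)$ is continuous, and composing with the canonical injective restriction $\rho\colon C_p(\beta X)\to C_p(X)$ gives the transpose of $\Phi$, whose image has compact closure in $C_p(X)$ by the Korovin hypothesis. Pseudocompactness of $X$ (a consequence of being Korovin, since $C_p(X)$ contains a closed copy of $\mathbb{R}$) allows one to identify the closures of the corresponding sets under $\rho$, so $\overline{\widetilde{\Phi}^{T}(X)}$ is compact in $C_p(\beta X)$ as well. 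A second application of (1), now to the pair $(X,\beta X)$, produces the separately continuous extension $\widehat{\Phi}\colon \beta X\times\beta X\to\mathbb{R}$. The equivalence with (c) is handled exactly as in part (1).

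For part (3), the Korovin property of a \mpc/-complete $Y$ (which is pseudocompact in the intended context) follows immediately from the defining property applied with the first coordinate equal to $Y$ itself, giving $(Y,Y)$ Grothendieck. The four sufficient conditions for \mpc/-completeness are instances of classical Grothendieck-type theorems in $C_p$-theory: $k$-spaces are the subject of Grothendieck's original result; separability and countable tightness reduce the relevant compactness problem in $C_p(Y)$ to countable arguments; countably compact $Y$ is handled directly since then continuous images of pseudocompact spaces in $C_p(Y)$ are already relatively compact (as in \cite{Reznichenko2024gr,Arhangelskii1997}). The principal technical obstacle across the whole proposition is the step in (1c) and (2c) of confining the assembled extension to $\mu Z$ rather than to the larger $\beta Z$; this requires carefully coupling Dieudonn\'e completeness of $\mu Z$ with the separate continuity that propagates membership in $\mu Z$ from the dense subspace to its compactification.
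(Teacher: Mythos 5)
Your overall strategy (the transpose correspondence between separately continuous maps $X\times Y\to\R$ and continuous maps $X\to C_p(Y)$, extension via the universal property of $\beta X$ into the compact closure, then iteration in the second coordinate) is the right one — the paper itself only cites \cite{Reznichenko1994} and \cite{ReznichenkoUspenskij1998}, where essentially this scheme is carried out. However, two of your key steps are not actually proved, and the justifications you give for them are false as stated. First, in (1)(b)$\Rightarrow$(1)(c) you must confine the assembled extension $\wh\F\:\beta X\times Y\to\R^{C(Z)}$ to $\mu Z$. Your appeal to ``the characterization of $\mu Z$ as the largest Dieudonn\'e complete subspace of $\beta Z$ containing $Z$'' is wrong: $\mu Z$ is the \emph{smallest} such subspace (the largest is $\beta Z$ itself), and density of $X\times Y$ plus separate continuity only place the values in the closure of $Z$ in $\R^{C(Z)}$, which is the Hewitt realcompactification $\upsilon Z$, in general strictly larger than $\mu Z$. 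The missing argument is that for each fixed $y\in Y$ the compact set $\wh\F(\beta X\times\{y\})$ equals the closure of the functionally bounded set $\F(X\times\{y\})\subset Z$, and bounded subsets of $Z$ have compact closure in $\mu Z$; without this fact you have only landed in $\upsilon Z$.

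Second, and more seriously, the compactness-transfer step in part (2) is based on a false claim. For pseudocompact $X$ the restriction map $\rho\:C_p(\beta X)\to C_p(X)$ is a continuous bijection, but it does \emph{not} ``identify the closures of corresponding sets'': take $X=\omega_1$ (countably compact, hence Korovin) and $f_n=\chi_{[\alpha_n,\omega_1)}$ with $\alpha_n\nearrow\omega_1$; then $\{f_n\}\cup\{0\}$ is compact in $C_p(\omega_1)$, while the set of extensions $\{\wh f_n\}\subset C_p(\beta\omega_1)$ is closed, infinite and discrete, hence its closure is not compact. So knowing that $\overline{\rho(\wt\F^{T}(X))}$ is compact in $C_p(X)$ does not by itself give compactness of $\overline{\wt\F^{T}(X)}$ in $C_p(\beta X)$; one must use the specific structure of $\wt\F^{T}(X)$ as a continuous image of the pseudocompact Korovin space $X$ and prove, in effect, that $(X,\beta X)$ is a Grothendieck pair (equivalently, that every cluster point of $\wt\F^{T}(X)$ in $\R^{\beta X}$ is continuous on $\beta X$). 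That is precisely the nontrivial content of (2)(a)$\Rightarrow$(b), i.e. of \cite[Proposition 3.8]{ReznichenkoUspenskij1998}, so as written your argument is circular at exactly the decisive point. (Minor additional remarks: you never address (b)$\Rightarrow$(a) in part (2), though it is routine, and in part (3) you correctly note that pseudocompactness of $Y$ must be assumed to pass from \mpc/-completeness to the Korovin property; the sufficient conditions themselves are, as in the paper, citations to classical Grothendieck-type theorems.)
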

\begin{proof}
Assertion   (1) follows from \cite[Assertion 1.2]{Reznichenko1994}.
Assertion   (2) follows from \cite[Proposition 3.8]{ReznichenkoUspenskij1998}.
Assertion   (3) follows from \cite[Corollary 1.9]{Reznichenko1994}.
\end{proof}

The following statement follows from Theorem \ref{t:main:2} and Proposition \ref{p:main:1}.
 
\begin{theorem}\label{t:main:4} Let $X$ be a semitopological Korovin Mal'cev algebra.
Then the Mal'cev operation on $X$ extends to a separately continuous Mal'cev operation on $\bt X$.
\end{theorem}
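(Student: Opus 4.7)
The plan is to verify condition (3) of Theorem \ref{t:main:3}---that $X$ is pseudocompact and that the Mal'cev operation $M$ is $2$-$\bt$-extendable---and then invoke the implication (3) $\rarr$ (1) of that theorem to obtain the desired separately continuous Mal'cev extension on $\bt X$.

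Pseudocompactness of $X$ is immediate from the Korovin hypothesis, as the excerpt already records: $C_p(X)$ contains a closed copy of $\R$, so the Grothendieck property of the pair $(X,X)$ forces every continuous real-valued function on $X$ to be bounded. In particular $\mu X = \bt X$, an identification I will need at the final step.

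Next, I would check $2$-$\bt$-extendability of $M\: X^3 \to X$ case by case according to the size of the ``active'' index set $B \subset \{1,2,3\}$. The cases $|B|\le 1$ are essentially trivial: fixing at least two of the three arguments yields either a constant or a continuous self-map of $X$, and any continuous map $X\to X$ admits a unique continuous extension $\bt X \to \bt X$ by the defining property of the Stone--\v{C}ech compactification. The substantive case is $|B|=2$, where fixing a single argument of $M$ produces a separately continuous map $g\: X\times X\to X$. Here I would invoke Proposition \ref{p:main:1}(2)(c) with target $Z=X$, which supplies a separately continuous extension $\hat g\: \bt X \times \bt X \to \mu X$; by pseudocompactness $\mu X = \bt X$, so $\hat g$ takes values in $\bt X$, as required by Definition \ref{d:fsv:1}.

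I do not anticipate a serious obstacle: the heavy lifting is already packaged into the Korovin-extension result of Proposition \ref{p:main:1}(2) and into the equivalence (3) $\lrarr$ (1) in Theorem \ref{t:main:3}, which absorbs both the upgrade from $2$-$\bt$-extendability to global separate continuity on the cube $(\bt X)^3$ and the verification that the extended ternary operation still satisfies the Mal'cev identities \eqref{eq:malc}. The only piece of glue is the identification $\mu X = \bt X$ made possible by pseudocompactness of the Korovin space.
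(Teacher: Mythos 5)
Your proof is correct and follows essentially the same route the paper intends: the paper derives Theorem \ref{t:main:4} by combining the equivalence in Theorem \ref{t:main:3} (the citation of Theorem \ref{t:main:2} there is evidently a slip) with Proposition \ref{p:main:1}(2), exactly as you do. Your verification of condition (3) of Theorem \ref{t:main:3} — pseudocompactness from the Korovin property, the trivial $|B|\le 1$ cases, and the $|B|=2$ case via Proposition \ref{p:main:1}(2)(c) with $Z=X$ and $\mu X=\bt X$ — supplies precisely the details the paper leaves implicit.
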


Let $X$ and $Y$ be spaces. We say that $(X, Y)$ is a \term{$C$-pair} if every continuous function $\F\: X\times Y\to \R$ has a continuous extension $\wh\F\: \bt X\times Y \to \R$.
Obviously, if $(X, Y)$ is a $C$-pair, then $X$ is pseudocompact.


The following statements are the main results of the work and are proved in Section \ref{sec:ext}.

\begin{theorem}\label{t:main:5}
Let $X$ be a topological $E$-algebra. If $X^n$ is pseudocompact and $(X^n,E_n)$ is a $C$-pair for any $n\in\sp E$, then  $\op Xn$ has a continuous extension $\op{\bt X}n\: E_n \times(\bt X)^n \to \bt X$ for any $n\in\sp E$. That is, $\wh X=(\bt X,\sqn{\op{\bt X}n})$ is a topological $E$-algebra and $X$ is a subalgebra of the algebra $\wh X$.
\end{theorem}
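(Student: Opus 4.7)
The plan is to extend each operation $\op Xn$ separately by combining Glicksberg's theorem with the $C$-pair extension property, and then to assemble the partial real-valued extensions via the canonical embedding of $\bt X$ into a Tychonoff cube. Fix $n\in\sp E$ and write $\F=\op Xn\:E_n\times X^n\to X\subseteq\bt X$ for brevity. Since $X^n$ is pseudocompact, Glicksberg's theorem yields $\bt(X^n)=(\bt X)^n$, so the hypothesis that $(X^n,E_n)$ is a $C$-pair means precisely that every continuous map $X^n\times E_n\to\R$ extends continuously to $(\bt X)^n\times E_n\to\R$.

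For each bounded continuous $f\:X\to[-1,1]$ I would first pass to the Stone--\v Cech extension $\wt f\:\bt X\to[-1,1]$, compose to obtain $\wt f\circ\F\:E_n\times X^n\to\R$, and apply the $C$-pair hypothesis to produce a continuous extension
\[
g_f\:E_n\times(\bt X)^n\to\R
\]
of $\wt f\circ\F$. Because $g_f$ agrees with a $[-1,1]$-valued function on the dense set $E_n\times X^n$ and $[-1,1]$ is closed in $\R$, $g_f$ itself takes values in $[-1,1]$. Embedding $\bt X$ canonically into the cube $K=[-1,1]^{C^*(X,[-1,1])}$ by evaluation, I would then assemble these coordinates into
\[
\wh\F\:E_n\times(\bt X)^n\to K,\qquad (\wh\F(s,\bar y))_f=g_f(s,\bar y),
\]
which is continuous by the universal property of the product topology.

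What remains is to verify that $\wh\F$ actually takes values in $\bt X\subseteq K$. On the dense subset $E_n\times X^n$ the map $\wh\F$ coincides with the canonical embedding applied to $\F$, so it lands in $X\subseteq\bt X$. Since $\bt X$ is closed in $K$ and $\wh\F$ is continuous, the image over all of $E_n\times(\bt X)^n$ lies in $\bt X$; uniqueness of the extension is automatic from density of $X$ in $\bt X$. Carrying this out for every $n\in\sp E$ equips $\bt X$ with continuous operations $\op{\bt X}n$ making it a topological $E$-algebra that contains $X$ as a subalgebra.

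I expect the most delicate step to be the correct use of Glicksberg's theorem: without pseudocompactness of $X^n$ the $C$-pair hypothesis would only give extensions defined on $\bt(X^n)\times E_n$, and the coordinatewise assembly into a continuous map on $E_n\times(\bt X)^n$ would break down. Everything else reduces to a standard density-and-closedness argument inside the cube $K$.
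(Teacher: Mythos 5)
Your argument is correct and follows essentially the same route as the paper: the paper proves a general lemma (its Proposition on $C$-pairs) by exactly your cube-embedding-and-diagonal construction, then combines it with Glicksberg's theorem to identify $\bt(X^n)$ with $(\bt X)^n$; you simply inline that lemma with the target $\bt X$ and apply Glicksberg at the start rather than the end. The density/closedness verification and the use of pseudocompactness are the same as in the paper's proof.
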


We say that a space $Y$ is \term{$C$-universal} if $(X,Y)$ is a $C$-pair for every pseudocompact $X$.
The following statement follows from Theorem \ref{t:main:5}.

\begin{theorem}\label{t:main:6}
Let $X$ be a topological $E$-algebra.
If $E$ is $C$-universal and $X^n$ is pseudocompact for any $n\in\sp E$, then
the operations of $X$ extend to continuous operations on $\bt X$ with respect to which $\bt X$ is a topological $E$-algebra.
\end{theorem}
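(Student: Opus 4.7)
The plan is to derive Theorem \ref{t:main:6} as an immediate consequence of Theorem \ref{t:main:5}. What needs to be checked is that the hypotheses of Theorem \ref{t:main:5} hold, namely that $X^n$ is pseudocompact and that $(X^n, E_n)$ is a $C$-pair for every $n \in \sp E$. The first is given, so the only task is showing that each $(X^n, E_n)$ is a $C$-pair.

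The key observation is that $C$-universality passes to clopen subspaces when one restricts the first coordinate to pseudocompact spaces. Fix $n \in \sp E$ and a continuous $g \colon X^n \times E_n \to \R$. Since the signature is continuous, $E_n$ is clopen in $E$, so we may extend $g$ to a continuous $\tilde g \colon X^n \times E \to \R$ by setting $\tilde g = g$ on $X^n \times E_n$ and $\tilde g \equiv 0$ on $X^n \times (E \setminus E_n)$; continuity follows because these two pieces are defined on disjoint open subsets covering $X^n \times E$. By the assumed $C$-universality of $E$ together with pseudocompactness of $X^n$, the pair $(X^n, E)$ is a $C$-pair, so $\tilde g$ extends to a continuous $\hat g \colon \bt(X^n) \times E \to \R$. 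The restriction $\hat g\restriction_{\bt(X^n) \times E_n}$ is then a continuous extension of $g$, which shows that $(X^n, E_n)$ is a $C$-pair.

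With both hypotheses of Theorem \ref{t:main:5} now in place, we obtain for each $n \in \sp E$ a continuous extension $\op{\bt X}n \colon E_n \times (\bt X)^n \to \bt X$ of $\op Xn$, making $\wh X = (\bt X, \sqn{\op{\bt X}n})$ a topological $E$-algebra containing $X$ as a subalgebra, which is exactly the conclusion of Theorem \ref{t:main:6}. Since the substantive content is already encoded in Theorem \ref{t:main:5}, there is no real obstacle in this argument; the only point requiring attention is the clopen extension trick used to transfer $C$-universality from $E$ to each component $E_n$, and even this reduces to using the fact that constant $0$ glues continuously with $g$ across the clopen boundary.
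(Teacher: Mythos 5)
Your proposal is correct and follows the paper's own route: the paper derives Theorem \ref{t:main:6} directly from Theorem \ref{t:main:5}, and you do the same, merely filling in the routine step that $(X^n,E_n)$ is a $C$-pair because $E_n$ is clopen in the $C$-universal space $E$ (extend by $0$ across the clopen partition, apply the $C$-pair property of $(X^n,E)$, restrict to $\bt(X^n)\times E_n$). That detail is sound and exactly the kind of verification the paper leaves implicit.
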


Any $b^*_R$-space \cite{Noble1969} is $C$-universal (Proposition \ref{p:ext:4}). In particular, any $k$-space is $C$-universal (Corollary \ref{c:ext:1}).

\begin{cor}\label{c:main:1}
Let $X$ be a topological $E$-algebra.
If $E$ is a $k$-space and $X^n$ is pseudocompact for any $n\in\sp E$, then
the operations of $X$ extend to continuous operations on $\bt X$ with respect to which $\bt X$ is a topological $E$-algebra.
\end{cor}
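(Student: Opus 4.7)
The plan is to derive Corollary~\ref{c:main:1} as an immediate consequence of Theorem~\ref{t:main:6}. Since that theorem requires $E$ to be $C$-universal, everything reduces to establishing that every $k$-space is $C$-universal, which is the content of the forthcoming Corollary~\ref{c:ext:1}. Once this is in place, the hypotheses of Theorem~\ref{t:main:6} are satisfied verbatim, and the desired conclusion follows.

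To verify that a $k$-space $Y$ is $C$-universal, I fix a pseudocompact space $X$ and a continuous $\F\: X\times Y \to \R$. For any compact $K \subset Y$, the product $X \times K$ is pseudocompact (being a pseudocompact space times a compact one), so Glicksberg's theorem gives $\bt(X\times K) = \bt X \times K$. Hence the bounded continuous function $\restr{\F}{X\times K}$ extends to a continuous function $\F_K\: \bt X \times K \to \R$. If $K \subset K'$ are compact, then $\restr{\F_{K'}}{\bt X \times K}$ and $\F_K$ are both continuous extensions of $\restr{\F}{X\times K}$, so they coincide by density of $X\times K$ in $\bt X \times K$. We therefore obtain a well-defined mapping $\wh\F\: \bt X \times Y \to \R$ whose restriction to each $\bt X \times K$ with $K\subset Y$ compact coincides with the continuous $\F_K$.

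It remains to show that $\wh\F$ is continuous. Since $\bt X$ is compact and $Y$ is a $k$-space, the product $\bt X \times Y$ is itself a $k$-space, so it suffices to check that the restriction of $\wh\F$ to every compact subset $L \subset \bt X \times Y$ is continuous. But the projection of $L$ to $Y$ is a compact set $K \subset Y$, hence $L \subset \bt X \times K$, on which $\wh\F$ coincides with the continuous $\F_K$.

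The main obstacle is not computational but structural: one must invoke two classical facts, namely that a pseudocompact space times a compact space is pseudocompact with $\bt(X\times K)=\bt X \times K$, and that a compact space times a $k$-space is again a $k$-space. Without the latter, the glued map $\wh\F$ would not obviously be jointly continuous on $\bt X \times Y$ even though its restrictions to the natural cover $\{\bt X \times K : K\subset Y \text{ compact}\}$ are.
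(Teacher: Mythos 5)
Your proof is correct, and its top-level structure coincides with the paper's: Corollary \ref{c:main:1} is deduced from Theorem \ref{t:main:6} once one knows that every $k$-space is $C$-universal (Corollary \ref{c:ext:1}). Where you genuinely diverge is in how that key fact is obtained. The paper gets it essentially by citation: it characterizes $C$-universality of $Y$ through $z$-closedness of the projections $X\times Y\to Y$ for pseudocompact $X$ (Propositions \ref{p:ext:2} and \ref{p:ext:3}, resting on the Comfort--Hager theorem), then invokes Noble's result to conclude that every $b^*_R$-space is $C$-universal (Proposition \ref{p:ext:4}), and finally notes that $k$-spaces are $b^*_R$-spaces because a pseudocompact space times a compact space is pseudocompact. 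You instead prove the $k$-space case directly: restrict $\F$ to $X\times K$ for compact $K\subset Y$, use pseudocompactness of $X\times K$ together with Glicksberg's theorem to extend over $\bt X\times K$, glue these extensions (they are compatible by density of $X\times K$ in $\bt X\times K$, and any two compacta containing a given point are absorbed by their union), and check joint continuity of the glued map on the $k$-space $\bt X\times Y$ (product of a locally compact space with a $k$-space), since every compact $L\subset \bt X\times Y$ sits inside some $\bt X\times K$. Every step is sound, and the two classical ingredients you flag are exactly the right ones. The trade-off: your argument is self-contained, reusing only Glicksberg's theorem (already needed for Theorem \ref{t:main:5}) plus the standard product theorem for $k$-spaces, but it covers only $k$-spaces; the paper's route, at the price of the external references to Noble and Comfort--Hager, yields the more general Proposition \ref{p:ext:4} for the strictly wider class of $b^*_R$-spaces, of which Corollary \ref{c:ext:1} is a special case.
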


In connection with Theorems \ref{t:main:5} and \ref{t:main:6} and Corollary \ref{c:main:1} the question arises under what conditions a product of pseudocompact spaces is pseudocompact.
This issue is discussed in \cite[Section 1.4]{AACCICTM2018}. In \cite{Novak1953,Terasaka1952} countably compact spaces are constructed whose product is not pseudocompact.
The presence of an additional algebraic structure may entail the pseudocompactness of some powers of the space.
A pseudocompact (para)topological group is pseudocompact to any power
\cite{ComfortRoss1966,Reznichenko1994} (see Theorem \ref{t:main:1}). A pseudocompact Mal'cev algebra is pseudocompact to any power \cite{ReznichenkoUspenskij1998} (see Theorem \ref{t:main:2}).
Homogeneous pseudocompact spaces $X$ and $Y$ whose product $X \times Y$ is not pseudocompact were constructed in \cite{ComfortVanMill1985,rezn2020}. However, the problem of the existence of a homogeneous pseudocompact space $X$ for which the product $X \times X$ is not pseudocompact \cite[Question 5.3]{ComfortVanMill1985} still remains open.


\begin{theorem}\label{t:main:7}
Let $X$ be a semitopological $E$-algebra.
If $X$ is pseudocompact, $(X,E)$ is Grotendieck, and $f^X\: X^n\to X$ is $2$-quasicontinuous for any $n\in \sp E$ and $ f\in E_n$, then
the operations of $X$ extend to separately continuous operations on $\bt X$ with respect to which $\bt X$ is a semitopological $E$-algebra.
\end{theorem}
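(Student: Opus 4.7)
The plan is to produce the extension in two stages: first, for each fixed $t \in E_n$, extend the single operation $t^X$ to a separately continuous map $t^{\bt X} : (\bt X)^n \to \bt X$; then verify that the joint map $E_n \times (\bt X)^n \to \bt X$ so obtained is also continuous in the signature variable. For $n = 0$ the first stage is trivial: compose $\op X0$ with the inclusion $X \hookrightarrow \bt X$. For $n \ge 1$ and fixed $t \in E_n$, the operation $t^X$ is separately continuous (as $X$ is semitopological) and $2$-quasicontinuous (by hypothesis), and $X$ is pseudocompact; this is precisely the input for the extension mechanism from \cite{Reznichenko2022-2} that provides the equivalences $(2) \Leftrightarrow (3) \Leftrightarrow (4)$ of Theorem \ref{t:main:3}. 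That mechanism is operation-by-operation and does not invoke the Mal'cev identity, so it yields a separately continuous extension $t^{\bt X} : (\bt X)^n \to \bt X$. Setting $\op{\bt X}n(t,\bar x) := t^{\bt X}(\bar x)$, continuity of $\op{\bt X}n$ in each $X$-coordinate (with $t$ and the remaining coordinates fixed) is automatic.

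Since $E_n$ is clopen in $E$, we have $C_p(E) \cong C_p(E_n) \times C_p(E \setminus E_n)$ as a topological product, so the Grothendieck property of $(X,E)$ descends to $(X,E_n)$. Continuity in $t$ is then proved by induction on $k \in \{0,1,\ldots,n\}$ of the statement: for every $y_1,\ldots,y_k \in \bt X$ and $x_{k+1},\ldots,x_n \in X$, the map
\[
E_n \to \bt X, \qquad t \mapsto t^{\bt X}(y_1,\ldots,y_k,x_{k+1},\ldots,x_n),
\]
is continuous. The base $k=0$ is separate continuity of $\op Xn$ in $t$. For the step $k \to k+1$, fix $y_1,\ldots,y_k \in \bt X$ and $x_{k+2},\ldots,x_n \in X$ and consider
\[
\Phi : X \times E_n \to \bt X, \qquad \Phi(y,t) := t^{\bt X}(y_1,\ldots,y_k,y,x_{k+2},\ldots,x_n).
\]
By Stage~1, $\Phi$ is continuous in $y$ for each fixed $t$; by the inductive hypothesis it is continuous in $t$ for each fixed $y \in X$. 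Hence $\Phi$ is separately continuous, and since $(X,E_n)$ is Grothendieck and $\bt X$ is compact (so $\mu \bt X = \bt X$), Proposition \ref{p:main:1} provides a separately continuous extension $\widehat\Phi : \bt X \times E_n \to \bt X$. For each fixed $t$, both $\widehat\Phi(\cdot,t)$ and $y \mapsto t^{\bt X}(y_1,\ldots,y_k,y,x_{k+2},\ldots,x_n)$ are continuous maps $\bt X \to \bt X$ that agree with $\Phi(\cdot,t)$ on the dense set $X$, and hence coincide on all of $\bt X$. Evaluating at $y = y_{k+1}$ and using separate continuity of $\widehat\Phi$ in $t$ closes the induction; taking $k=n$ yields continuity in $t$ of $\op{\bt X}n$ at every $\bar x \in (\bt X)^n$.

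The main obstacle is the second stage: one must propagate the Grothendieck-based extension coordinate by coordinate and glue each new extension to the Stage~1 extension $t^{\bt X}$, with uniqueness of continuous extensions to $\bt X$ supplying the glue. Stage~1, in contrast, is essentially a citation once one observes that the machinery underlying Theorem \ref{t:main:3} is operation-agnostic (it applies to any $2$-quasicontinuous separately continuous operation on a pseudocompact space). With both stages complete, $\wh X = (\bt X, \sqn{\op{\bt X}n})$ is a semitopological $E$-algebra extending $X$.
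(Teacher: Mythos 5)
Your proof is correct. The paper itself proves this theorem by a one-line appeal to Proposition \ref{p:ext:5}: part (2) of that proposition performs exactly your Stage 1 (the per-symbol extension of $t^X$ via \cite[Theorem 5]{Reznichenko2022-2}, which indeed uses only pseudocompactness, separate continuity and $2$-quasicontinuity, not the Mal'cev identity), and part (1) supplies continuity in the signature variable. Where you genuinely differ is in how that second stage is carried out: the paper's proof of Proposition \ref{p:ext:5}(1) forms the diagonal map into $[0,1]^T$ (here $T=E_n$), runs an induction over the partial products $P_i=\bt X_1\times\dots\times\bt X_i\times X_{i+1}\times\dots\times X_n$, and uses the Grothendieck property in its original $C_p$-form (compactness of $\cl{f(X_i)}$ in $C_p(T,[0,1])$ forces $f(\bt X_i)\subset C_p(T,[0,1])$), reducing a general target to $[0,1]$ by embedding $\bt Y$ into $[0,1]^{\cF}$; you run the analogous coordinate-by-coordinate induction but mediate it through the extension characterization of Grothendieck pairs (Proposition \ref{p:main:1}(1), with $\mu(\bt X)=\bt X$) and glue each extension $\wh\Phi(\cdot,t)$ to the Stage 1 map $t^{\bt X}$ by uniqueness of continuous extensions over the dense set $X$. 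The two inductions are the same idea in different clothing, so each step of yours is sound; your version avoids the $C_p$-valued diagonal argument at the cost of invoking Proposition \ref{p:main:1} at every step of the induction, and it has the small merit of making explicit a point the paper leaves implicit, namely that the Grothendieck property of $(X,E)$ passes to $(X,E_n)$ because $E_n$ is clopen in $E$.
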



\begin{theorem}\label{t:main:8}
Let $X$ be a semitopological $E$-algebra.
If $X$ is Korovin and $(X,E)$ is Grotendieck, then
the operations of $X$ extend to separately continuous operations on $\bt X$ with respect to which $\bt X$ is a semitopological $E$-algebra.
\end{theorem}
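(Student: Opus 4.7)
The plan is to deduce Theorem \ref{t:main:8} from Theorem \ref{t:main:7}. Since every Korovin space is pseudocompact (as noted in the paper, because $C_p(X)$ contains a closed copy of $\R$) and the Grothendieck hypothesis on $(X,E)$ is assumed, the only hypothesis of Theorem \ref{t:main:7} left to verify is $2$-quasicontinuity of every operation $f^X\: X^n\to X$ for $n\in\sp E$ and $f\in E_n$. Pseudocompactness of $X$ also gives $\mu X=\bt X$, which I will use when invoking Proposition \ref{p:main:1}.

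To verify $2$-quasicontinuity, fix $f\in E_n$, a subset $B\subset\sset{1,\dots,n}$ with $|B|\le 2$, and $\bar x\in X^{\sset{1,\dots,n}\setminus B}$, and put $g=r(f^X,X^n,\bar x)\: X^B\to X$. For $|B|\le 1$ the map $g$ is either a constant or a continuous function of one variable coming from the separate continuity of $f^X$, and is therefore quasicontinuous. In the substantive case $|B|=2$, the map $g\: X\times X\to X$ is separately continuous, so Proposition \ref{p:main:1}(2)(c) applied with $Z=X$ extends $g$ to a separately continuous map $\wh g\: \bt X\times \bt X\to \mu X=\bt X$. To obtain quasicontinuity of $g$ from that of $\wh g$, I would restrict: given $x\in X\times X$, a neighborhood $O$ of $g(x)$ in $X$, and a neighborhood $W$ of $x$ in $X\times X$, lift them to open $\wh O\subset\bt X$ and $\wh W\subset\bt X\times\bt X$ with $\wh O\cap X=O$ and $\wh W\cap(X\times X)=W$; quasicontinuity of $\wh g$ yields a nonempty open $\wh U\subset\wh W$ with $\wh g(\wh U)\subset\wh O$, and $U=\wh U\cap(X\times X)$ is then a nonempty open subset of $W$ (nonempty by density of $X\times X$ in $\bt X\times\bt X$) with $g(U)\subset O$.

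The step I expect to require the most care is the quasicontinuity of $\wh g\: \bt X\times \bt X\to \bt X$. For real- or metric-valued separately continuous maps on a product of two compacta this is classical, and the results of \cite{Reznichenko2022-2} should supply the precise form needed. If the target is not covered directly, I would reduce to the real-valued case using that $\bt X$ is Tychonoff: given $x\in\bt X\times\bt X$, a neighborhood $\wh O$ of $\wh g(x)$, and a neighborhood $\wh W$ of $x$, choose a continuous $h\: \bt X\to[0,1]$ with $h(\wh g(x))=1$ and $h\equiv 0$ on $\bt X\setminus\wh O$; then $h\circ \wh g\: \bt X\times\bt X\to[0,1]$ is separately continuous on a compact product, hence quasicontinuous, which produces a nonempty open $\wh U\subset\wh W$ with $h(\wh g(\wh U))\subset(1/2,1]$ and therefore $\wh g(\wh U)\subset\wh O$. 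With $2$-quasicontinuity established, Theorem \ref{t:main:7} applies and delivers separately continuous extensions $\op{\bt X}n\: E_n\times(\bt X)^n\to\bt X$ making $\bt X$ a semitopological $E$-algebra.
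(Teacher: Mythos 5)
Your proof is correct, and its skeleton is the same as the paper's: reduce to Theorem \ref{t:main:7} by noting that Korovin implies pseudocompact, that $(X,E)$ Grothendieck is assumed, and that the only thing left to check is $2$-quasicontinuity of each operation $f^X$. Where you differ is in how that last step is justified. The paper simply cites \cite[Theorems 5 and 6]{Reznichenko2022-2} to get $2$-quasicontinuity of $f^X$; you instead derive it inside the paper's own toolkit: the Korovin property via Proposition \ref{p:main:1}(2)(c) (with $Z=X$ and $\mu X=\bt X$ by pseudocompactness) gives a separately continuous extension $\wh g\:\bt X\times\bt X\to\bt X$ of each two-variable restriction $g$, a Namioka-type classical theorem (compact $\times$ compact, real-valued, after composing with a Urysohn function on the compact Hausdorff target) gives quasicontinuity of $\wh g$, and your restriction/density argument transfers quasicontinuity back to $g$ on $X\times X$; the $|B|\le 1$ cases are immediate from separate continuity. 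All of these steps check out, including the reduction of the target to $[0,1]$ and the lifting of neighborhoods in the restriction step. What your route buys is a proof that is self-contained modulo Namioka's theorem and that makes explicit where Korovin and pseudocompactness enter; what the paper's route buys is brevity, since the cited external theorems (which relate $2$-$\be$-extendability, $2$-quasicontinuity, and the Korovin property) package essentially the same argument you reconstructed.
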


Since continuous mappings of the product of spaces are $2$-quasicontinuous, the following statement follows from Theorem \ref{t:main:8} and Proposition \ref{p:main:1}.

\begin{cor}\label{c:main:2}
Let $X$ be a topological $E$-algebra.
If $E$ is a $k$-space and $X$ is pseudocompact, then
the operations of $X$ extend to separately continuous operations on $\bt X$ with respect to which $\bt X$ is a semitopological $E$-algebra.
\end{cor}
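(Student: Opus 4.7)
The plan is to view $X$ as a semitopological $E$-algebra (every topological algebra is trivially also semitopological) and apply Theorem~\ref{t:main:7}. To do so, I will verify its three hypotheses: that $X$ is pseudocompact, that $(X,E)$ is Grothendieck, and that every operation $f^X\colon X^n\to X$ with $n\in\sp E$ and $f\in E_n$ is $2$-quasicontinuous.

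Pseudocompactness of $X$ is given by hypothesis. The Grothendieck condition will come from Proposition~\ref{p:main:1}(3): since $E$ is a $k$-space, $E$ is \mpc/-complete, which by definition means that $(Z,E)$ is Grothendieck whenever $Z$ is pseudocompact. Specialising to $Z=X$ yields that $(X,E)$ is Grothendieck.

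For the $2$-quasicontinuity of each $f^X$, I appeal to the fact that $X$ is a \emph{topological} $E$-algebra, so $f^X\colon X^n\to X$ is continuous on the whole product. Any restriction of the form $r(f^X,X^n,\bar x)$ obtained by freezing some of the coordinates of a continuous mapping is again continuous, and continuous maps are automatically quasicontinuous. Hence the quasicontinuity requirement of Definition~\ref{d:fsv:1} is met for all $B\subset\{1,\dots,n\}$ with $|B|\le 2$ and all parameter tuples $\bar x$, which is precisely $2$-quasicontinuity of $f^X$.

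With the three hypotheses in place, Theorem~\ref{t:main:7} delivers separately continuous extensions of the operations of $X$ to $\bt X$, turning $\bt X$ into a semitopological $E$-algebra. I do not foresee any real obstacle: the corollary is essentially a repackaging of Theorem~\ref{t:main:7} together with Proposition~\ref{p:main:1}(3), whose only substantive content is the chain ``$E$ is a $k$-space $\Rightarrow$ $E$ is \mpc/-complete $\Rightarrow$ $(X,E)$ is Grothendieck'', combined with the trivial observation that continuous operations are $2$-quasicontinuous.
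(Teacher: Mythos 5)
Your proposal is correct and is essentially the paper's own argument: the paper derives Corollary~\ref{c:main:2} from the observation that continuous maps of products are $2$-quasicontinuous together with Proposition~\ref{p:main:1}(3) (a $k$-space $E$ is weakly $pc$-Grothendieck, so $(X,E)$ is Grothendieck for pseudocompact $X$), exactly as you do. The only discrepancy is that the paper's text cites Theorem~\ref{t:main:8} (which would require $X$ Korovin), while the hypotheses actually being verified are those of Theorem~\ref{t:main:7}; your appeal to Theorem~\ref{t:main:7} is the correct reading, so you have in effect also fixed an apparent reference slip in the paper.
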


\subsection{Identities in extensions of $E$-algebras}

If two continuous maps coincide on a dense subset, then they coincide everywhere.
Therefore, the following statement is true.

\begin{theorem}\label{t:main:9}
Let $X$ be a topological $E$-algebra and $Y$ be a dense subalgebra of $X$, and let $p,q \in\Tm(E)$. If the identity $p\approx q$ holds for $Y$, then the identity $p\approx q$ holds for $X$.
\end{theorem}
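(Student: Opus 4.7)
The plan is to reduce the identity $p\approx q$ to the statement that two continuous maps $X^R\to X$ coincide on a dense subset, where $R=\vars p\cup\vars q$. Since terms have only finitely many variables, $R$ is finite, which is what makes the density argument clean.

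First I would verify that, for every term $t\in\Tm(E)$ with $\vars t\subset R$, the evaluation map $t^X\:X^R\to X$ is continuous. This is a straightforward induction on the construction of $t$. In the base cases $t$ is either a constant $c\in E_0$ (giving the constant map $\bar x\mapsto c^X$) or a variable $v\in R$ (giving the projection $\bar x\mapsto x_v$). For the inductive step, if $t=f(t_1,\dots,t_m)$ with $f\in E_m$, then $t^X=f^X\circ(t_1^X,\dots,t_m^X)$; each $t_i^X$ is continuous by the inductive hypothesis, and $f^X\:X^m\to X$ is continuous because it is obtained from the jointly continuous map $\op Xm\:E_m\times X^m\to X$ by fixing the first coordinate at $f$.

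Next I would observe that $Y^R$ is dense in $X^R$, since $Y$ is dense in $X$ and $R$ is finite (indeed, density is preserved under arbitrary products, but the finite case suffices here). Because $Y$ is a subalgebra, the induced evaluation maps $p^Y, q^Y\:Y^R\to Y$ are simply the restrictions of $p^X, q^X$ to $Y^R$, followed by the inclusion $Y\hookrightarrow X$. The hypothesis that $p\approx q$ holds in $Y$ therefore gives $p^X|_{Y^R}=q^X|_{Y^R}$.

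Finally, since the ambient space $X$ is Tychonoff (in particular Hausdorff), two continuous maps $X^R\to X$ that agree on the dense subset $Y^R$ must agree on all of $X^R$, which is exactly the statement that the identity $p\approx q$ holds in $X$. There is no real obstacle here; the only point that requires a moment's care is that the continuity of $f^X$ for fixed $f\in E_n$ follows from the joint continuity of $\op Xn$ on $E_n\times X^n$, which is the definition of a topological $E$-algebra in the continuous-signature framework used in the paper.
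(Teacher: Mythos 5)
Your argument is correct and is essentially the paper's own proof: the paper justifies Theorem \ref{t:main:9} by the single observation that two continuous maps agreeing on a dense subset coincide, and you have simply filled in the standard details (continuity of the term maps $p^X,q^X\:X^R\to X$ by induction on the term, density of $Y^R$ in $X^R$, and the Hausdorff property of $X$). No gaps.
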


This theorem cannot be extended to semitopological $E$-algebras. Let $\Z$ be the group of integers with the discrete topology. Let $S$ denote the one-point compactification of $\Z$ by the point $\infty$. Let us extend the operations from $\Z$ to $S$: $\infty+x=x+\infty=\infty$, $-\infty=\infty$. Then $S$ is a compact semitopological algebra with signature $(0,-,+)$. The identity $x+(-x)\approx 0$ does not hold in $S$.


The following statement is easy to verify.

\begin{proposition}\label{p:main:e1}
Let $X$ and $Z$ be  spaces, and let $f,g: X^n\to Z$ be separately continuous maps. 
If $Y\subset X=\cl Y$ and $\restr{f}{Y^n}=\restr{g}{Y^n}$, then $f=g$.
\end{proposition}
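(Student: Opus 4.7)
The plan is to prove the claim by induction on the number of coordinates along which we have replaced points of $Y$ by arbitrary points of $X$. For $0 \le k \le n$, let $P(k)$ denote the assertion: for all $x_1,\dots,x_k \in X$ and all $y_{k+1},\dots,y_n \in Y$, one has
\[
f(x_1,\dots,x_k,y_{k+1},\dots,y_n) = g(x_1,\dots,x_k,y_{k+1},\dots,y_n).
\]
The case $P(0)$ is simply the hypothesis $\restr{f}{Y^n} = \restr{g}{Y^n}$, and the conclusion $f = g$ on $X^n$ is exactly $P(n)$. So it suffices to carry out the inductive step.

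For the step $P(k) \Rightarrow P(k+1)$, fix arbitrary $x_1,\dots,x_k \in X$ and $y_{k+2},\dots,y_n \in Y$, and consider the two maps
\[
h_f, h_g \colon X \to Z, \qquad h_f(t) = f(x_1,\dots,x_k,t,y_{k+2},\dots,y_n),
\]
with $h_g$ defined analogously using $g$. Since $f$ and $g$ are separately continuous, $h_f$ and $h_g$ are continuous functions on $X$. By the inductive hypothesis $P(k)$ applied with the $(k+1)$-th coordinate ranging over $Y$, these two maps coincide on the dense subset $Y \subset X$. Because $Z$ is a space in the paper's convention (hence Tychonoff and in particular Hausdorff), two continuous maps into $Z$ that agree on a dense set agree everywhere. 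Thus $h_f = h_g$ on $X$, which is precisely $P(k+1)$.

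There is no real obstacle here: the argument simply unfurls the coordinates one at a time, using separate continuity to reduce each step to the one-variable case, where density of $Y$ in $X$ and Hausdorffness of $Z$ finish the job. The only mild point to flag is the appeal to the standing convention that ``space'' means Tychonoff, which supplies the Hausdorff property of $Z$ needed to conclude agreement on $X$ from agreement on $Y$.
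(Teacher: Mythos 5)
Your proof is correct, and it is exactly the coordinate-by-coordinate induction the paper has in mind when it calls the statement ``easy to verify'': separate continuity reduces each step to the one-variable case, where density of $Y$ in $X$ and the Hausdorffness of $Z$ (supplied by the standing Tychonoff convention, as you note) give agreement on all of $X$. No gaps.
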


\begin{theorem}\label{t:main:10}
Let $X$ be a semitopological $E$-algebra, let $Y$ be a dense subalgebra of $X$, and let $p,q \in\Tm(E)$.
Suppose that $\Om(p)(v)\leq 1$ and $\Om(q)(v)\leq 1$ for all $v\in \vV$, that is,
that each variable from $\cV$ appears in $p$ and $q$ at most once.
Then if the identity $p\approx q$ holds for $Y$, then the identity $p\approx q$ holds for $X$.
\end{theorem}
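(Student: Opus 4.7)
The plan is to reduce the statement to Proposition \ref{p:main:e1} by showing that, under the single-occurrence hypothesis, the term evaluation maps $p^X$ and $q^X$ are separately continuous on a suitable power $X^R$. Once that is in place, density of $Y$ in $X$ gives density of $Y^R$ in $X^R$, the hypothesized identity gives $p^X = q^X$ on $Y^R$, and Proposition \ref{p:main:e1} finishes the job.

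More precisely, let $R = \vars p \cup \vars q$ (a finite set) and enumerate it as $R = \{v_1, \dots, v_k\}$. I would prove by induction on the complexity of a term $t \in \Tm(E)$ the following claim: if each variable appears in $t$ at most once, then for every finite set $R' \supseteq \vars t$ the evaluation map $t^X \colon X^{R'} \to X$ (depending only on the coordinates in $\vars t$) is separately continuous. The base cases $t \in V$ and $t \in E_0$ are immediate, since $t^X$ is either a coordinate projection or a constant. For the inductive step, write $t = f(t_1,\dots,t_m)$ with $f \in E_m$; the single-occurrence hypothesis on $t$ forces the sets $\vars t_i$ to be pairwise disjoint and each $t_i$ to again satisfy the single-occurrence hypothesis, so the induction hypothesis applies to every $t_i$.

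To verify separate continuity of $t^X$ at some coordinate $v \in R'$, fix all other coordinates. If $v \notin \vars t$, then $t^X$ does not depend on $x_v$ and is constant along that coordinate. Otherwise $v$ lies in exactly one $\vars t_{i_0}$; then $t_j^X$ is constant as a function of $x_v$ for $j \neq i_0$, while $t_{i_0}^X$ depends on $x_v$ continuously by the inductive hypothesis. Since the semitopological structure gives separate continuity of $f^X \colon X^m \to X$ in each of its arguments, the composition $f^X(t_1^X,\dots,t_m^X)$ is continuous in $x_v$, as required.

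Applying the claim to $p$ and $q$ yields separately continuous maps $p^X, q^X \colon X^R \to X$. Because $Y$ is dense in $X$, the product $Y^R$ is dense in $X^R$, and by assumption $p^Y = q^Y$ on $Y^R$; hence $p^X$ and $q^X$ agree on a dense subset of $X^R$. Proposition \ref{p:main:e1} (applied with the finite index set $R$ in place of $n$) then gives $p^X = q^X$ on all of $X^R$, which is the required identity in $X$. The main obstacle is the inductive step, where one must exploit the single-occurrence hypothesis to ensure that only one subterm moves when a single variable is perturbed — this is exactly what fails in the counterexample $x + (-x) \approx 0$ on the one-point compactification of $\Z$, where $x$ occurs twice.
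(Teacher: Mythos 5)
Your proposal is correct and follows essentially the same route as the paper: reduce to Proposition~\ref{p:main:e1} by observing that, under the single-occurrence hypothesis, $p^X$ and $q^X$ are separately continuous on $X^R$ and agree on the dense subset $Y^R$. The inductive argument you give for separate continuity of the term maps is simply an explicit justification of a step the paper states without proof, and it is sound.
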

\begin{proof}
Let us put $R=\vars p \cup \vars q$, $f(\bar x)=p^X(\bar x)$, and $g(\bar x)=q^X(\bar x)$ for $\bar x\in X^R$. Then $f$ and $g$ are separately continuous functions on $X^R$ and $\restr{f}{Y^R}=\restr{g}{Y^R}$. From Proposition \ref{p:main:e1} it follows that $f=g$.
\end{proof}

From this theorem it follows that the identities of associativity and commutativity of groups are preserved in extensions. For example, $S$ is a commutative semigroup.

\begin{cor}\label{c:main:main10}
Let $G$ be a semitopological semigroup.
If $G$ is a dense subspace of a space $S$ and  multiplication in $G$ extends to a separately continuous operation on $S$, then $S$ is a semitopological semigroup with respect to this operation.
If $G$ is commutative, then the semigroup $S$ is commutative.
\end{cor}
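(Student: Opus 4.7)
The plan is to reduce the statement to Theorem \ref{t:main:10}, which is tailor-made for preserving identities in which each variable appears at most once. First, I would regard $S$ as a semitopological $E$-algebra for the signature $E$ with $E_2=\{\cdot\}$ and $E_n=\es$ otherwise, equipped with the discrete topology. By hypothesis the extended multiplication on $S$ is separately continuous, and since it restricts to the original multiplication of $G$ on $G\times G$, the set $G$ is closed under the operation of $S$. Combined with density of $G$ in $S$, this makes $G$ a dense subalgebra of $S$ in the sense of Section~\ref{sec:prel:alg}.

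Next, I would apply Theorem \ref{t:main:10} to the associativity identity $p\approx q$ given by the two terms $p=\vv{1}\cdot(\vv{2}\cdot\vv{3})$ and $q=(\vv{1}\cdot\vv{2})\cdot\vv{3}$. Each variable $\vv{i}$ occurs exactly once in $p$ and exactly once in $q$, so the occurrence condition $\Om(p)(v)\leq 1$ and $\Om(q)(v)\leq 1$ required by the theorem is satisfied. Since $G$ is a semitopological semigroup, the identity holds on $G$, and Theorem \ref{t:main:10} then gives that it holds on $S$. Combined with the separate continuity of the extended multiplication, this means $S$ is a semitopological semigroup.

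For the second assertion, I would apply the same theorem once more to the identity $\vv{1}\cdot\vv{2}\approx \vv{2}\cdot\vv{1}$, which again is linear in each variable. If $G$ is commutative, the identity holds on the dense subalgebra $G$, and hence on $S$ by Theorem \ref{t:main:10}.

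No substantial obstacle is anticipated: the whole content of the corollary is that the defining identities of (commutative) semigroups are multilinear, precisely the class covered by Theorem \ref{t:main:10}. The only point requiring a moment's care is recognizing that the hypothesis of the corollary literally gives $G$ as a dense subalgebra of a semitopological $E$-algebra $S$, after which the theorem is invoked off the shelf.
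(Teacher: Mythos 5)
Your proof is correct and matches the paper's intended argument: the corollary is stated there as an immediate consequence of Theorem \ref{t:main:10}, applied exactly as you do to the associativity and commutativity identities, in which each variable occurs only once. Your explicit verification that $G$ is a dense subalgebra of the semitopological algebra $S$ is the same routine step the paper leaves implicit.
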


The following statement is proved in Section \ref{sec:ident}.

\begin{theorem}\label{t:main:11}
Let $X$ be a semitopological $E$-algebra and $Y$ be a dense pseudocompact subalgebra of $X$,  and let $p, q\in\Tm(E)$.
Assume that for each $v\in \vV$ one of the  following conditions is satisfied:
\begin{enumerate}
\item
$\Om(p)(v)\leq 1$ and $\Om(q)(v)\leq 2$;
\item
$\Om(p)(v)\leq 2$ and $\Om(q)(v)\leq 1$.
\end{enumerate}
Then if the identity $p\approx q$ holds for $Y$, then the identity $p\approx q$ holds for $X$.
\end{theorem}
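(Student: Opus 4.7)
The plan is to reduce the identity step by step along the variables and, at each step, apply a diagonal extension lemma whose essential hypothesis is the pseudocompactness of $Y$. By the symmetry between $p$ and $q$ in the statement, I may assume that case (2) holds: $\Om(p)(v)\leq 2$ and $\Om(q)(v)\leq 1$ for every $v$. Set $R=\vars p\cup \vars q$ and partition it as $R_1=\{v\in R:\Om(p)(v)\leq 1\}$ and $R_2=R\setminus R_1$; every variable in $R_1$ occurs at most once in both $p$ and $q$, while every variable in $R_2$ occurs exactly twice in $p$ and at most once in $q$.

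I would first upgrade the identity from $Y^R$ to $Y^{R_2}\times X^{R_1}$ by the argument used for Theorem \ref{t:main:10}: once $\bar x_2\in Y^{R_2}$ is frozen, both $\bar x_1\mapsto p^X(\bar x_1,\bar x_2)$ and $\bar x_1\mapsto q^X(\bar x_1,\bar x_2)$ are separately continuous on $X^{R_1}$ and agree on the dense subset $Y^{R_1}$, so Proposition \ref{p:main:e1} forces them to coincide on all of $X^{R_1}$. I would then enumerate $R_2=\{v_1,\dots,v_k\}$ and extend one variable at a time. At stage $i$, having the identity on $X^{R_1}\times X^{\{v_1,\dots,v_{i-1}\}}\times Y^{\{v_i,\dots,v_k\}}$, I would freeze all variables other than $v_i$: the right-hand side $q^X$ is then continuous in $v_i$ (one occurrence), while the left-hand side $p^X$ is the composition of the separately continuous map $(v_i',v_i'')\mapsto \hat p^X(\dots,v_i',\dots,v_i'',\dots)$ with the diagonal $v_i\mapsto(v_i,v_i)$. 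Equality holds for $v_i\in Y$ and we must extend it to $v_i\in X$.

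This final extension is the content of the following diagonal extension lemma, which is the main obstacle: if $Y$ is a dense pseudocompact subspace of $X$, $h\colon X\times X\to X$ is separately continuous, $g\colon X\to X$ is continuous, and $h(y,y)=g(y)$ for all $y\in Y$, then $h(x,x)=g(x)$ for all $x\in X$. Density alone is insufficient, as the one-point compactification of $\Z$ discussed after Theorem \ref{t:main:10} shows; the pseudocompactness of $Y$ must force the diagonal agreement to propagate. The intended route is a Grothendieck-type extension principle along the lines of Proposition \ref{p:main:1}: pseudocompactness and density force $Y$ to be $C^*$-embedded in $X$ with $\bt Y\cong\bt X$, so that $g$ and a separately continuous extension of $h$ live on the compactum $\bt X\times\bt X$, along whose diagonal the density of $Y$ suffices to transfer the equality. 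Once this lemma is in hand, iterating it through $v_1,\dots,v_k$ completes the proof of Theorem \ref{t:main:11}.
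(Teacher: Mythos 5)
Your reduction — handling one variable at a time, splitting the doubly occurring variable into two fresh arguments so that the term becomes a separately continuous map $\ph$ of two variables, and comparing its diagonal restriction with the continuous function coming from the other term — is essentially the paper's own induction (the paper inducts on the number of coordinates lying outside the dense subalgebra). But the whole weight of the theorem rests on the diagonal lemma you isolate, and that lemma is left unproved; the route you sketch for it does not work. First, a dense pseudocompact subspace need not be $C^*$-embedded, so ``$\bt Y\cong\bt X$'' is not available: take any pseudocompact non-compact $Y$ and a compactification of it other than $\bt Y$ (e.g.\ $Y=[0,\om_1)\times\{0,1\}$ inside the compactification obtained by identifying the two points $(\om_1,0)$ and $(\om_1,1)$); moreover the ambient $X$ in the theorem is an arbitrary semitopological algebra, not a compactification. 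Second, even granting that, a separately continuous $h\:X\times X\to X$ need not extend to a separately continuous map on $\bt X\times\bt X$; that is precisely the delicate extension problem of Theorems~\ref{t:main:3}, \ref{t:main:7} and \ref{t:main:8} and Proposition~\ref{p:main:1}, which requires Korovin/Grothendieck or $2$-quasicontinuity hypotheses that you do not have here. Third, and decisively, your final step ``along the diagonal the density of $Y$ suffices to transfer the equality'' is false, and your own example refutes it: $S=\Z\cup\{\infty\}$ is compact, $\Z$ is dense in $S$, $h(x,y)=x+(-y)$ is separately continuous on $S\times S$, $g\equiv 0$ is continuous, $h(n,n)=g(n)$ for all $n$ in the dense set $\Z$, yet $h(\infty,\infty)=\infty\neq 0$. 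So after moving to a compact square you face exactly the original difficulty, and pseudocompactness of $Y$ has never actually been used where it must be; the argument is circular at its key point.

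What closes this gap in the paper is a specific $C_p$-theoretic fact, \cite[Proposition 3.12]{ReznichenkoUspenskij1998}: if $Y$ is dense and pseudocompact in the ambient space, $\ph$ is separately continuous on the square, and the restriction of $\ph$ to the diagonal $\D_Y=\set{(y,y):y\in Y}$ is continuous (which holds here because on $\D_Y$ it coincides with the continuous map coming from $q$), then the restriction of $\ph$ to the full diagonal is continuous; after that, two continuous maps agreeing on a dense set coincide. You would need to prove your diagonal lemma by an argument of this Grothendieck type (or cite such a result); density plus compactness of the ambient square cannot replace it. A separate, minor slip: the hypothesis of the theorem is per variable, so you may not assume globally that case (2) holds for every $v$ — one variable may satisfy $\Om(p)(v)=1$, $\Om(q)(v)=2$ and another the reverse. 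Since your iteration treats one variable at a time, this is harmless: just swap the roles of $p$ and $q$ at each stage, as the paper does; but your partition of $R$ into $R_1$ and $R_2$ as written presupposes the invalid global normalization.
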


The following statement follows from Theorem \ref{t:main:11}.

\begin{cor}[{\cite{ReznichenkoUspenskij1998}}]\label{c:main:3}
Let $Y$ be a pseudocompact Mal'cev semitopological algebra, $X$ be an extension of $Y$, $Y\subset X=\cl Y$, and the Mal'cev operation $\mu$ on $Y$ extend to a separately continuous operation $\hat\mu$ on $X$. Then $\hat\mu$ is a Mal'cev operation.
\end{cor}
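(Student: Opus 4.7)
The plan is to reduce the corollary directly to Theorem \ref{t:main:11} applied to the two Mal'cev identities. First I would set up the framework: take $E$ to be the (discrete, one-point) signature consisting of a single ternary symbol $\mu$, so that $Y = (Y,\mu)$ and $X = (X,\hat\mu)$ are both $E$-algebras. Since $\hat\mu$ extends $\mu$ and $Y$ is closed under $\mu$ by hypothesis, $Y$ is a subalgebra of $X$; it is dense by assumption and pseudocompact by assumption, and $X$ is a semitopological $E$-algebra since $\hat\mu$ is separately continuous. Thus the hypotheses of Theorem \ref{t:main:11} are in place.

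Next I would verify the Mal'cev identities one at a time. The identity $\mu(y,y,x)\approx x$ corresponds to $p = \mu(\vv y,\vv y,\vv x)$ and $q = \vv x$ (writing the two relevant variables as $\vv x,\vv y\in\vV$). Here $\Om(p)(\vv y) = 2$ and $\Om(q)(\vv y) = 0$, so condition (2) of Theorem \ref{t:main:11} holds for $\vv y$; while $\Om(p)(\vv x) = 1$ and $\Om(q)(\vv x) = 1$, so condition (1) holds for $\vv x$. Since the identity $p\approx q$ holds for $Y$ (because $\mu$ is Mal'cev on $Y$), Theorem \ref{t:main:11} yields $\hat\mu(y,y,x) = x$ for all $x,y\in X$. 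The identity $\mu(x,y,y)\approx x$ is handled by the symmetric choice of $p = \mu(\vv x,\vv y,\vv y)$ and $q = \vv x$, with the occurrence counts the same.

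The two conclusions together give $\hat\mu(y,y,x) = \hat\mu(x,y,y) = x$ for every $x,y\in X$, which is precisely the statement that $\hat\mu$ is a Mal'cev operation on $X$.

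There is essentially no obstacle here beyond a routine verification of the occurrence inequalities $\Om(p)(v)\leq 1,2$ and $\Om(q)(v)\leq 2,1$ in each Mal'cev identity; the entire substance of the argument is encapsulated in Theorem \ref{t:main:11}, which handles the analytic/topological passage from the pseudocompact dense subalgebra $Y$ to its closure $X$.
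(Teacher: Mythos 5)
Your proposal is correct and matches the paper's route exactly: the paper derives Corollary~\ref{c:main:3} as an immediate consequence of Theorem~\ref{t:main:11}, and your verification of the occurrence counts for the two Mal'cev identities $\mu(y,y,x)\approx x$ and $\mu(x,y,y)\approx x$ is precisely the routine check the paper leaves implicit.
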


\subsection{Factorization of mappings and embedding of algebras into products of separable metrizable algebras}

Let $X$ be an $E$-algebra.
A (semi)topological algebra $X$ is called \term{$\R$-factorizable} if, for any continuous function $f: X\to \R$, there exists a separable metrizable (semi)topological algebra $Y$, a continuous function $g: Y\to\R$, and a continuous homomorphism $h: X\to Y$ such that $f=g\circ h$.

\begin{proposition}\label{p:main:2}
If $X$ is an $\R$-factorizable (semi)topological $E$-algebra, then $X$ embeds in a product of separable metrizable (semi)topological $E$-algebras.
\end{proposition}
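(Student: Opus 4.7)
The plan is to use $\R$-factorizability to assemble a diagonal continuous homomorphism from $X$ into a product of separable metrizable (semi)topological $E$-algebras, and then verify that this diagonal is a topological embedding via the standard Tychonoff-style argument. The underlying fact used throughout is that $X$ is Tychonoff, so continuous real-valued functions on $X$ separate points from closed sets.

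First, for each continuous $f\:X\to\R$, $\R$-factorizability supplies a separable metrizable (semi)topological $E$-algebra $Y_f$, a continuous homomorphism $h_f\:X\to Y_f$, and a continuous map $g_f\:Y_f\to\R$ such that $f=g_f\circ h_f$. Setting $\cF=C(X,\R)$, the product $Z=\prod_{f\in\cF}Y_f$ inherits a natural coordinate-wise (semi)topological $E$-algebra structure, and the diagonal
\[
\F\:X\to Z,\quad x\mapsto (h_f(x))_{f\in\cF},
\]
is a continuous homomorphism.

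Next I would verify that $\F$ is a topological embedding. Injectivity is immediate: if $x\neq y$, choose $f\in\cF$ with $f(x)\neq f(y)$; then $h_f(x)\neq h_f(y)$ since $g_f\circ h_f=f$, and therefore $\F(x)\neq \F(y)$. For separation of a point $x$ from a closed set $F\subset X$ with $x\notin F$, pick $f\in\cF$ with $f(x)=0$ and $f(F)\subset\{1\}$; if $h_f(x)$ were in $\cl{h_f(F)}$, continuity of $g_f$ would give $0=g_f(h_f(x))\in\cl{g_f(h_f(F))}\subset\{1\}$, a contradiction. Hence the family $\{h_f\}_{f\in\cF}$ separates points from closed sets, which combined with injectivity shows $\F$ is a topological embedding, and $\F(X)$ is a subalgebra of $Z$ since $\F$ is a homomorphism.

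No serious obstacle arises. The hypothesis of $\R$-factorizability is exactly what is needed to upgrade each separating real-valued continuous function required by the Tychonoff embedding scheme to a separating continuous homomorphism into a separable metrizable $E$-algebra, after which the standard diagonal-embedding argument carries over to the (semi)topological $E$-algebra category without modification.
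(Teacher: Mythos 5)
Your proposal is correct and follows essentially the same route as the paper: for each $f\in C(X)$ fix a factorization $f=g_f\circ h_f$ through a separable metrizable (semi)topological $E$-algebra $Y_f$ and take the diagonal $\diag_{f\in C(X)}h_f\colon X\to\prod_{f\in C(X)}Y_f$, which is a homomorphic embedding. The only difference is that you spell out the standard verification (injectivity and separation of points from closed sets via the maps $g_f$) that the paper leaves implicit.
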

\begin{proof}
For each continuous function $f\in C(X)$ on $X$, we fix a separable metrizable (semi)topological algebra $Y_f$, a continuous function $g_f\: Y_f\to\R $, and a continuous homomorphism $h_f\: X\to Y_f$ such that $f=g_f\circ h_f$. Then the mapping
\[
\ph=\diag_{f\in C(X)}h_f\: X\to \prod_{f\in C(X)} Y_f
\]
is a homomorphic embedding of $X$ into the $E$-algebra $\prod_{f\in C(X)} Y_f$.
\end{proof}

Let $T$, $X_1$, $X_2$, ..., $X_n$ be spaces.
We say that the product space
$\prod_{i=1}^n X_i$
is
\term{(separately) $\R$-factorizable over $T$} if, for any (separately) continuous function $f\: T\times \prod_{i=1}^n X_i\to \R$, there are separable metrizable spaces $Y_1$, $Y_2$, ..., $Y_n$, continuous mappings $h_i\: X_i\to Y_i$ for $i=1, 2, ..., n$, and a continuous function
$g\: T\times \prod_{i=1}^n Y_i\to \R$ such that  $f=g\circ h$, where $h=\id_T\times \prod_{i=1}^n h_i$.

The following statements are proved in Section \ref{sec:rfmap}.

\begin{theorem}\label{t:main:12}
Let $X$ be a topological $E$-algebra.
If the product $X^n$ is $\R$-factorizable over $E_n$ for $n\in \sp E$, then $X$ is $\R$-factorizable.
\end{theorem}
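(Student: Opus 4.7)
The plan is to start from a given continuous $f \: X \to \R$ and build a countable family $\mathcal{F} \subset C(X)$ that is ``closed under pulling back through the operations'', so that the diagonal evaluation map $h \: X \to \R^{\mathcal{F}}$ factors $f$ through its image $Y = h(X)$. Since $\mathcal{F}$ will be countable, $\R^{\mathcal{F}}$ is separable metrizable and hence so is $Y$; the closure condition will let us equip $Y$ with a continuous $E$-algebra structure making $h$ a continuous homomorphism.

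I would define $\mathcal{F}$ by recursion on $\omega$. Put $\mathcal{F}_0 = \{f\}$. Given a countable $\mathcal{F}_k$, for each $\varphi \in \mathcal{F}_k$ and each $n \in \sp E$, apply the $\R$-factorizability of $X^n$ over $E_n$ to the continuous function $\varphi \circ \op{X}{n} \: E_n \times X^n \to \R$, obtaining separable metrizable spaces $Y_1^{\varphi,n}, \dots, Y_n^{\varphi,n}$, continuous maps $h_i^{\varphi,n} \: X \to Y_i^{\varphi,n}$, and a continuous function $g^{\varphi,n} \: E_n \times \prod_i Y_i^{\varphi,n} \to \R$ satisfying
\[
\varphi\bigl(\op{X}{n}(t, x_1, \dots, x_n)\bigr) = g^{\varphi,n}\bigl(t, h_1^{\varphi,n}(x_1), \dots, h_n^{\varphi,n}(x_n)\bigr).
\]
Since each $Y_i^{\varphi,n}$ embeds into $\R^{\omega}$, the map $h_i^{\varphi,n}$ is coded by countably many continuous real-valued functions on $X$; I add all of these to $\mathcal{F}_{k+1}$. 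Then $\mathcal{F} = \bigcup_k \mathcal{F}_k$ is countable.

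Let $h \: X \to \R^{\mathcal{F}}$ be the diagonal of $\mathcal{F}$ and put $Y = h(X) \subset \R^{\mathcal{F}}$. Writing $x \sim y$ for $h(x) = h(y)$, the closure condition and the displayed identity show $\sim$ is a congruence: if $x_i \sim y_i$ then $h_j^{\varphi,n}(x_i) = h_j^{\varphi,n}(y_i)$ for every $\varphi \in \mathcal{F}$ and $n \in \sp E$ (because the coordinate functions of $h_j^{\varphi,n}$ belong to $\mathcal{F}$), so the right-hand side of the identity is the same for $\vec x$ and $\vec y$, yielding $\varphi(\op{X}{n}(t, \vec x)) = \varphi(\op{X}{n}(t, \vec y))$ for every $\varphi$ and every $t \in E_n$. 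Consequently $Y$ carries a well-defined $E$-algebra structure determined by $\op{Y}{n}(t, h(\vec x)) = h(\op{X}{n}(t, \vec x))$, and $h$ is a surjective (continuous) homomorphism; moreover $f = \pi_f|_Y \circ h$.

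The main obstacle is then the continuity of each $\op{Y}{n}$, and the same closure condition provides it. For each $h_i^{\varphi,n}$ there is a unique continuous map $\tilde h_i^{\varphi,n} \: Y \to Y_i^{\varphi,n}$ with $h_i^{\varphi,n} = \tilde h_i^{\varphi,n} \circ h$, since the topology of $Y_i^{\varphi,n}$ is generated by countably many real-valued functions whose pullbacks to $X$ lie in $\mathcal{F}$ (hence factor through $h$ as restrictions of coordinate projections on $\R^{\mathcal{F}}$). Rewriting the factorization identity on $Y$ gives
\[
\pi_\varphi \circ \op{Y}{n}(t, y_1, \dots, y_n) = g^{\varphi,n}\bigl(t, \tilde h_1^{\varphi,n}(y_1), \dots, \tilde h_n^{\varphi,n}(y_n)\bigr),
\]
which is continuous in $(t, \vec y)$ for every coordinate $\varphi$. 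Hence $\op{Y}{n}$ is continuous into $\R^{\mathcal{F}}$, and its image lies in $Y$ by construction, so $Y$ is a separable metrizable topological $E$-algebra. The delicate point throughout is the bookkeeping: ensuring that $\mathcal{F}$ absorbs enough real-valued data at every stage so that both the congruence property and the continuity of $\op{Y}{n}$ can be read off from the factorization identities without leaving the countable setting.
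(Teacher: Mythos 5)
Your proof is correct and follows essentially the same route as the paper's: iterate the factorization hypothesis countably many times so the data closes off, then pass to the diagonal image of $X$ in a countable product, where the quotient algebra structure is continuous. The paper merely organizes the iteration differently, as an inverse sequence $Z_0 \leftarrow Z_1 \leftarrow \cdots$ with the operations on $Y$ obtained as inverse limits of the level-shifting maps $p_{m,n}$, while you keep a countable family of real-valued functions and check continuity of the induced operations coordinatewise --- a difference of bookkeeping only.
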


\begin{theorem}\label{t:main:13}
Let $X$ be a semitopological $E$-algebra.
If the product $X^n$ is separately $\R$-factorizable over $E_n$ for $n\in \sp E$, then $X$ is $\R$-factorizable.
\end{theorem}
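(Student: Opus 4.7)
The plan is to mimic the proof of Theorem \ref{t:main:12}, substituting ``separately $\R$-factorizable'' for ``$\R$-factorizable'' at each stage and exploiting the fact that, for a continuous $\phi\:X\to\R$, the composition $\phi\circ\op Xn\:E_n\times X^n\to\R$ is separately continuous (separate continuity of $\op Xn$ plus continuity of $\phi$). Fix a continuous function $f_0\:X\to\R$. The aim is to construct a countable family $\Phi\subset C(X)$ containing $f_0$ such that the diagonal map $H=\diag_{\phi\in\Phi}\phi\:X\to\R^\Phi$ has image $Y:=H(X)$, which is separable metrizable and admits an induced (semi)topological $E$-algebra structure for which $H$ is a continuous homomorphism; then $f_0=\pi_{f_0}\circ H$, where $\pi_{f_0}$ is the projection onto the $f_0$-coordinate.

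I would construct $\Phi$ by induction in $\om$ stages, starting with $\Phi_0=\{f_0\}$. Given a countable $\Phi_k$, for each $\phi\in\Phi_k$ and each $n\in\sp E$, apply separate $\R$-factorizability of $X^n$ over $E_n$ to the separately continuous function $\phi\circ\op Xn$: this yields separable metrizable $Y_1^{\phi,n},\ldots,Y_n^{\phi,n}$, continuous maps $h_i^{\phi,n}\:X\to Y_i^{\phi,n}$, and a jointly continuous $g^{\phi,n}\:E_n\times\prod_iY_i^{\phi,n}\to\R$ with
\[
\phi\circ\op Xn=g^{\phi,n}\circ\bigl(\id_{E_n}\times{\textstyle\prod_i h_i^{\phi,n}}\bigr).
\]
Each $Y_i^{\phi,n}$ embeds in $\R^\om$, so each $h_i^{\phi,n}$ corresponds to countably many continuous real-valued functions on $X$. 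Let $\Phi_{k+1}$ be $\Phi_k$ together with all these functions (ranging over $\phi\in\Phi_k$, $n\in\sp E$, $1\le i\le n$); it is again countable. Set $\Phi:=\bigcup_k\Phi_k$.

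Next, define the operations on $Y$ by
\[
\op Yn(e,H(x_1),\ldots,H(x_n)):=H(\op Xn(e,x_1,\ldots,x_n)).
\]
The crucial well-definedness uses that, for each $\phi\in\Phi_k$, the value $\phi(\op Xn(e,x_1,\ldots,x_n))=g^{\phi,n}(e,h_1^{\phi,n}(x_1),\ldots,h_n^{\phi,n}(x_n))$ depends on the $x_i$ only through the coordinates of $h_i^{\phi,n}(x_i)$, all of which lie in $\Phi_{k+1}\subset\Phi$ by construction; hence if $H(x_i)=H(x_i')$ for all $i$, then $\phi(\op Xn(e,x_1,\ldots,x_n))=\phi(\op Xn(e,x_1',\ldots,x_n'))$ for every $\phi\in\Phi$, so $H(\op Xn(e,\bar x))=H(\op Xn(e,\bar x'))$. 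The same identity, read coordinate-wise, shows each $\phi\circ\op Yn$ to be a composition of continuous projections of $\R^\Phi$ with the jointly continuous $g^{\phi,n}$, hence jointly continuous; therefore $\op Yn\:E_n\times Y^n\to Y$ is (even jointly) continuous, and in particular $Y$ is a semitopological $E$-algebra. By construction $H$ is a continuous homomorphism and $f_0=\pi_{f_0}\circ H$.

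The principal obstacle is simply keeping $\Phi$ countable while enforcing enough closure under the operations. This works because separate $\R$-factorizability delivers factorizations through separable metrizable spaces, which contribute only countably many new coordinate functions at each stage; iterating $\om$ times preserves countability. Everything else — well-definedness of $\op Yn$, its separate continuity, the homomorphism property of $H$, and the factoring of $f_0$ — is a direct reading of the factorization identity, with the bookkeeping matching that of Theorem \ref{t:main:12}.
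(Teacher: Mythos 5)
Your construction is correct and is essentially the paper's own proof: the paper iterates the factorization hypothesis countably many times, organizing the steps into an inverse sequence $Z_0\leftarrow Z_1\leftarrow\cdots$ with (separately) continuous maps $p_{m,n}\colon E_n\times Z_{m+1}^n\to Z_m$ and defining the operations on $Y=h(X)$ as inverse limits, which is the same closing-off-by-countably-many-coordinates argument you run with the family $\Phi$ and a direct well-definedness check on the diagonal image. One correction: in the separately $\R$-factorizable case the factoring maps $g^{\phi,n}$ are only \emph{separately} continuous (this is how the paper uses the definition --- its maps $r_{m,n}$ are ``(separately) continuous''; a jointly continuous $g^{\phi,n}$ together with continuous $h_i^{\phi,n}$ would force $\phi\circ\op Xn$ to be jointly continuous), so your composition argument gives separate rather than joint continuity of $\op Yn$ --- which is exactly what is needed for $Y$ to be a semitopological $E$-algebra, so the rest of your argument is unaffected.
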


A space $X$ has \term{caliber $\om_1$} if any uncountable family of nonempty open sets in $X$ has
an uncountable subfamily with nonempty intersection. 
Any separable space has caliber $\om_1$.

\begin{proposition}\label{p:main:3} Let $T$, $X_1$, $X_2$, ..., $X_n$ be spaces.
\begin{enumerate}
\item
If $T\times \prod_{i=1}^n X_i$ is a Lindelöf space, then the product $\prod_{i=1}^n X_i$ is $\R$-factorizable over $T$.
\item
If $X_i$ is a compact space with caliber $\om_1$ and the space $T$ is separable,
then the product $\prod_{i=1}^n X_i$ is separately $\R$-factorizable over $T$.
\end{enumerate}
\end{proposition}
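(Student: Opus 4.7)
The plan is to prove the two parts by separate arguments: (1) by extracting countable data from open covers using the Lindel\"of property, (2) by a Banach-space style embedding of each factor's dependence on a continuous function, combined with a density argument along the countable dense subset of $T$.

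For (1), let $f \colon Z \to \R$ be continuous with $Z := T \times \prod_{i=1}^n X_i$ Lindel\"of. For each $m \ge 1$ I cover $Z$ by basic open boxes $W_0 \times W_1 \times \dots \times W_n$ on which $f$ has oscillation less than $1/m$, and extract a countable subcover by Lindel\"ofness. Projection gives, for each $i \in \{0,1,\ldots,n\}$, a countable family $\cU_i$ of open subsets of the corresponding factor. By complete regularity, enlarge each $\cU_i$ (for $i\ge 1$) to a countable family $\cF_i \subset C(X_i,[0,1])$ of Urysohn functions subordinated to pairs from $\cU_i$, and set $h_i \colon X_i \to [0,1]^{\cF_i}$ to be the diagonal; let $Y_i := h_i(X_i)$, a separable metric space. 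The key point is that if $h_i(x_i) = h_i(x'_i)$ for every $i$, then $\bar x$ and $\bar x'$ lie in the same boxes of every $m$-cover, whence $|f(t,\bar x) - f(t,\bar x')| < 1/m$ for all $m$, forcing equality. Thus $f = g \circ h$ for $h := \id_T \times \prod h_i$, and the same oscillation estimate shows that $g$ is continuous.

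For (2), fix a countable dense $D \subset T$ and a separately continuous $f \colon T \times \prod X_i \to \R$. Each slice $f_t := f(t,\cdot)$ with $t \in D$ is continuous on the compact product. The workhorse is a factorization lemma for compact products: every continuous $F \colon \prod X_i \to \R$ factors as $F = G \circ \prod \ph_i^F$ with $\ph_i^F \colon X_i \to Y_i^F$ a continuous surjection onto a compact metric space $Y_i^F$ and $G$ continuous. For the construction I take
\[
  \ph_i^F \colon X_i \to C\bigl(\prod\nolimits_{j\ne i} X_j\bigr), \qquad x_i \mapsto F(x_i, \cdot),
\]
which is continuous in the sup-norm by joint continuity of $F$ and compactness of $\prod_{j\ne i} X_j$; its image is compact, hence a compact metric subspace of a Banach space. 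Apply this to every $f_t$, $t \in D$, and let $Y_i$ be the closure in the countable compact-metric product $\prod_{t\in D} Y_i^{f_t}$ of the diagonal image of $X_i$. Then $Y_i$ is again compact metric, and with $h_i \colon X_i \to Y_i$ the induced diagonal each $f_t$ ($t \in D$) factors through $h := \id_T \times \prod h_i$. Density of $D$ together with continuity of $f(\cdot,x)$ forces $\prod h_i(x) = \prod h_i(x') \Rightarrow f(t,x) = f(t,x')$ for every $t \in T$, giving a well-defined $g \colon T \times \prod Y_i \to \R$ with $f = g \circ h$, separately continuous.

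The main obstacle will be promoting $g$ from separate to joint continuity in (2); this is the step where the caliber $\om_1$ of each $X_i$ enters essentially (separability of $T$ alone supplies only the countable dense $D$). The plan is to exploit caliber $\om_1$, transferred through the compact metric $\prod Y_i$, to get a Grothendieck-type relative sup-norm compactness of the family $\{g(t,\cdot) : t \in T\} \subset C(\prod Y_i)$, so that the continuous map $T \to C_p(\prod Y_i)$ coming from separate continuity automatically lifts to a continuous map $T \to C(\prod Y_i)$ with the sup-norm, equivalent to joint continuity of $g$. In part (1) no analogous promotion is required because the Lindel\"of argument builds the oscillation control directly into the chosen box cover.
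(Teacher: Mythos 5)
Your part (1) is essentially a direct, written-out version of what the paper gets from Propositions \ref{p:rfmap:3-1} and \ref{p:rfmap:3} (Lindel\"ofness of $T\times\prod_{i=1}^n X_i$ gives $\R$-factorizability of the whole product, and one then simply recomposes with the map on the $T$-coordinate), and it can be made to work, but one step is unjustified as you state it: after extracting a countable subcover of \emph{arbitrary} open boxes, countably many Urysohn functions ``subordinated to pairs from $\cU_i$'' cannot detect membership in those boxes --- an open subset of a Tychonoff space is not determined by countably many continuous functions --- so the key claim ``$h_i(x_i)=h_i(x_i')$ for all $i$ implies $\bar x$ and $\bar x'$ lie in the same boxes'' does not follow. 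The standard repair is to change the order of operations: at each point first shrink the box to a rectangular \emph{cozero} box on which the oscillation of $f$ is $<1/m$, then use Lindel\"ofness to extract a countable subcover, and let $\cF_i$ consist of the functions defining those cozero boxes; with this the implication and the continuity of $g$ (via the same oscillation estimate on saturated neighborhoods) go through.

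Part (2) contains a genuine gap. For $n\ge 2$ the slices $f_t=f(t,\cdot)$ of a separately continuous $f$ are only \emph{separately} continuous on $\prod_{i=1}^n X_i$, not jointly continuous, so your workhorse embedding $x_i\mapsto f_t(x_i,\cdot)\in C\bigl(\prod_{j\ne i}X_j\bigr)$ is neither well defined (its values need not be continuous) nor sup-norm continuous, and the compact-metric factorization of $f_t$ collapses. This is precisely where the paper argues differently: it maps $X_i$ into $SC_p\bigl(T\times\prod_{j\neq i}X_j\bigr)$ with the \emph{pointwise} topology and invokes Proposition \ref{p:rfmap:4} (compact subsets of $SC_p$ are metrizable when the factors are compact of caliber $\om_1$ and $T$ is separable). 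Tellingly, caliber $\om_1$ never enters your construction, yet it is exactly what guarantees metrizability of the compact images in the pointwise topology. Finally, your closing plan to promote $g$ to joint continuity is both unnecessary and unattainable: in the separate case the factorization is only required (and only used, e.g.\ in Theorem \ref{t:main:13}) with a separately continuous $g$, and joint continuity can fail outright --- already for $n=1$, $T=X_1=[0,1]$ and the usual separately but not jointly continuous $f(t,x)=2tx/(t^2+x^2)$, $f(0,0)=0$, the function $f$ separates points of $X_1$, so any factorization $f=g\circ(\id_T\times h_1)$ forces $h_1$ to be a homeomorphic embedding, and a jointly continuous $g$ would make $f$ jointly continuous. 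Once the $Y_i$ are obtained correctly (compact metrizable via the $SC_p$ argument), separate continuity of $g$ is the easy part, since each $h_i$ is a closed quotient map of compacta; that is the content of the claim the paper cites at the end of its proof.
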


The following statement follows from Theorem \ref{t:main:12} and Proposition \ref{p:main:3}(1).

\begin{theorem}\label{t:main:14}
Let $X$ be a topological $E$-algebra.
If the product $E_n\times X^n$ is Lindelöf for $n\in \sp E$, then $X$ is $\R$-factorizable.
\end{theorem}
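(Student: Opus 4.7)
The plan is to read the conclusion off almost mechanically from the two cited results, so the proof proposal is really a verification that the hypotheses line up correctly. Fix $n \in \sp E$. In Proposition \ref{p:main:3}(1) I would set $T = E_n$ and take $X_1 = X_2 = \cdots = X_n = X$, so that $T \times \prod_{i=1}^n X_i = E_n \times X^n$. By assumption this product is Lindel\"of, and Proposition \ref{p:main:3}(1) then delivers that $\prod_{i=1}^n X_i = X^n$ is $\R$-factorizable over $E_n$. Since this holds for every $n \in \sp E$, I would then invoke Theorem \ref{t:main:12} to conclude that $X$ itself is $\R$-factorizable.

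There is no real obstacle here; the only point that needs a brief sanity check is that the notion of ``$\R$-factorizable over $T$'' from Proposition \ref{p:main:3}(1) matches the hypothesis of Theorem \ref{t:main:12} once we identify $T$ with $E_n$ and take all factors equal to $X$. Both definitions are about continuous functions $f : T \times X^n \to \R$ factoring through continuous maps on each factor and a continuous function on the target, so this identification is direct. Hence the proof is essentially one line: apply Proposition \ref{p:main:3}(1) for each $n \in \sp E$, then apply Theorem \ref{t:main:12}.
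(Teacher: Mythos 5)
Your proposal is correct and is exactly the paper's argument: the paper derives Theorem \ref{t:main:14} by applying Proposition \ref{p:main:3}(1) with $T=E_n$ and all factors equal to $X$ to get that $X^n$ is $\R$-factorizable over $E_n$ for each $n\in\sp E$, and then invoking Theorem \ref{t:main:12}. Nothing further is needed.
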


Theorem \ref{t:main:14} implies that a paratopological group with  Lindelöf square is $\R$-factorizable.
The Sorgenfrey line is a Lindelöf non-$\R$-factorizable paratopological group \cite[Remark 3.22]{XieLinTkachenko2013}. Any Lindelöf topological group is $\R$-factorizable \cite{Tkachenko1989}, \cite[Theorem 8.1.6]{at2009}. There is a separable non-$\R$-factorizable group \cite{ReznichenkoSipacheva2013,Reznichenko2024rf}.

The following statement follows from  Theorem \ref{t:main:13} and Proposition \ref{p:main:3}(2).

\begin{theorem}\label{t:main:15}
Let $X$ be a semitopological $E$-algebra.
If $X$ is a compact space with caliber $\om_1$ and $E$ is separable,
then $X$ is  $\R$-factorizable.
\end{theorem}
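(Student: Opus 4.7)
The plan is to combine Theorem \ref{t:main:13} with Proposition \ref{p:main:3}(2) exactly as the remark before the statement suggests. First I would observe that, since $E$ is separable and each $E_n$ is clopen in $E$, the piece $E_n$ is itself separable: any countable dense subset $D\subset E$ intersects every nonempty open subset of $E_n$ (which is open in $E$), so $D\cap E_n$ is countable and dense in $E_n$. This is the one technical check needed to make the hypotheses of Proposition \ref{p:main:3}(2) available.

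Next, for each $n\in\sp E$, I would apply Proposition \ref{p:main:3}(2) with $X_1=X_2=\dots=X_n=X$ and $T=E_n$. The proposition requires each factor to be compact of caliber $\om_1$, which is our assumption on $X$, and it requires $T$ to be separable, which we have just verified for $E_n$. The conclusion is that the product $X^n$ is separately $\R$-factorizable over $E_n$.

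Finally, with that factorization property in hand for every $n\in\sp E$, Theorem \ref{t:main:13} directly yields that the semitopological $E$-algebra $X$ is $\R$-factorizable, which is exactly the desired conclusion.

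Since both cited results are already proved in the paper, no real obstacle arises here; the only substantive point is the passage from separability of $E$ to separability of each $E_n$, and as noted this is immediate from the fact that the $E_n$ are clopen. The rest is a direct invocation of the two results, so the proof is essentially a one-line deduction once the separability of $E_n$ is recorded.
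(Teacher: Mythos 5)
Your proof is correct and is essentially the paper's own argument: the paper derives this statement directly from Theorem \ref{t:main:13} and Proposition \ref{p:main:3}(2), exactly as you do. Your extra observation that each $E_n$, being open in the separable space $E$, is itself separable is a valid (and welcome) spelling-out of the one small point the paper leaves implicit.
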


\subsection{Least factorizations}
In this section we do not assume any  separation axioms.

Let $X$ be a semitopological $E$-algebra,  and let $f\: X\to Z$ be a continuous map. A \term{factorization} of a mapping $f$ is a triple $(g,Y,h)$, where $Y$ is a semitopological $E$-algebra, $g\: X\to Y$ is a continuous surjective homomorphism, $h\: Y\to Z$ is continuous, and $f=h\circ g$. We denote the set of all factorizations of a mapping $f$ as $\gF( X,f,Z)$. On $\gF( X,f,Z)$ consider the partial order  defined by setting $(g_1, Y_1,h_1) \prec (g_2, Y_2,h_2)$ if there is a continuous homomorphism $q\:  Y_2\to  Y_1$, such that $g_1=g_2\circ q$. The factorization $(\id_X, X,f)$ is the greatest element in $\gF( X,f,Z)$.

The following theorem is proved in Section \ref{sec:mfac}.

\begin{theorem}\label{t:main:15}
Let $X$ be a semitopological $E$-algebra, and let $f\: X\to Z$ be a continuous map.
In the set $\gF( X,f,Z)$ of all factorizations of the mapping $f$ there is a least element $(g, Y,h)$.
Moreover, $Y$ is embedded in $Z^A$ for some set $A$.
\end{theorem}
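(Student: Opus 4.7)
My plan is to realize the least factorization as a subspace of $Z^A$ with $A=S(X)$, the translation semigroup of $X$. Define $\pi\colon X\to Z^A$ by $\pi(x)(\sigma)=f(\sigma(x))$. Because $X$ is semitopological, every principal translation is continuous and hence so is every $\sigma\in S(X)$, so each coordinate $f\circ\sigma$ is continuous and $\pi$ is continuous. Put $Y=\pi(X)$ with the subspace topology from $Z^A$, set $g=\pi\colon X\to Y$, and let $h=\pi_e|_Y$ where $e=\id_X$; then $h\circ g=f$ and $Y$ embeds in $Z^A$ by construction. The relation $x_1\sim x_2\iff\pi(x_1)=\pi(x_2)$ is a congruence: if $x_1\sim x_2$ and $\tau\in S_1(X)$, then for every $\sigma\in S(X)$ we have $\sigma\tau\in S(X)$, so $f(\sigma(\tau(x_1)))=f((\sigma\tau)(x_1))=f((\sigma\tau)(x_2))=f(\sigma(\tau(x_2)))$, whence $\tau(x_1)\sim\tau(x_2)$, and Proposition~\ref{p:prel:1} applies. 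Thus $Y\cong X/{\sim}$ carries a unique $E$-algebra structure with respect to which $g$ is a surjective homomorphism.

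The main obstacle is that quotients of semitopological algebras need not remain semitopological, so I cannot simply equip $X/{\sim}$ with the quotient topology; instead I must verify that $Y$, with the a priori coarser subspace topology from $Z^A$, is already a semitopological $E$-algebra. Fix $t\in E_n$, an index $i$, and $\bar x\in X^{n-1}$, and let $T_{t,i,g(\bar x)}$ be the resulting principal translation on $Y$; since $g$ is a homomorphism, $T_{t,i,g(\bar x)}\circ g=g\circ T_{t,i,\bar x}$. By definition of the topology on $Y$, continuity of $T_{t,i,g(\bar x)}$ reduces to continuity of $\pi_\sigma\circ T_{t,i,g(\bar x)}$ for each $\sigma\in A$, and the identity
\[
(\pi_\sigma\circ T_{t,i,g(\bar x)})(g(x))=f(\sigma(T_{t,i,\bar x}(x)))=\pi_{\sigma\circ T_{t,i,\bar x}}(g(x))
\]
exhibits $\pi_\sigma\circ T_{t,i,g(\bar x)}$ as the restriction to $Y$ of a coordinate projection, hence continuous. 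The continuity of $h=\pi_e|_Y$ is immediate for the same reason, and $(g,Y,h)\in\gF(X,f,Z)$.

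For minimality, given any $(g',Y',h')\in\gF(X,f,Z)$, I define $q\colon Y'\to Y$ by $q(g'(x))=g(x)$. If $g'(x_1)=g'(x_2)$, then for every $\sigma\in S(X)$ the surjective homomorphism $g'$ carries $\sigma$ to a translation $\sigma'$ on $Y'$ satisfying $g'\circ\sigma=\sigma'\circ g'$, whence $f(\sigma(x_1))=h'(\sigma'(g'(x_1)))=h'(\sigma'(g'(x_2)))=f(\sigma(x_2))$ and so $x_1\sim x_2$; thus $q$ is well defined. It is a homomorphism because $g$ and $g'$ are. Continuity follows from $(\pi_\sigma\circ q)(g'(x))=f(\sigma(x))=(h'\circ\sigma')(g'(x))$, so $\pi_\sigma\circ q=h'\circ\sigma'$ on $Y'$ is continuous for each $\sigma\in A$. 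Since $g=q\circ g'$, this yields $(g,Y,h)\prec(g',Y',h')$, establishing that $(g,Y,h)$ is the least element of $\gF(X,f,Z)$.
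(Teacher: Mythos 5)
Your construction coincides with the paper's: the diagonal map $g=\diag_{\si\in S(X)}f\circ\si$ into $Z^{S(X)}$ with $Y=g(X)$ and $h=\pi_e$, the congruence check via Proposition~\ref{p:prel:1}, the coordinatewise argument $\pi_\si\circ T_{t,i,g(\bar x)}=\pi_{\si\circ T_{t,i,\bar x}}$ for continuity of translations on $Y$, and the minimality argument through the homomorphism of translation semigroups induced by the surjective homomorphism $g'$ (your well-definedness check for $q$ replaces the paper's direct definition of the connecting map as a diagonal product, but the content is the same). All of these steps are correct.

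There is one omission relative to the paper's setting. Here the signature $E$ carries a topology (each $E_n$ is clopen), and ``semitopological $E$-algebra'' means that every structure map $\op Yn\:E_n\times Y^n\to Y$ is separately continuous --- continuous not only in each algebra coordinate (which is exactly continuity of the principal translations, the only thing you verify) but also in the symbol coordinate: for each fixed $\bar y\in Y^n$ the map $E_n\to Y$, $t\mapsto\op Yn(t,\bar y)$, must be continuous. This is the paper's condition (${SC_2}$); as written, your argument proves $Y$ is semitopological only when $E$ is discrete, whereas topologized signatures are the whole point of several of the paper's results. The missing step is easy to supply by the same device: by surjectivity of $g$ write $\bar y=g^n(\bar x)$ with $\bar x\in X^n$; then $t\mapsto\op Yn(t,\bar y)$ equals $g\circ\psi$, where $\psi\:E_n\to X$, $t\mapsto\op Xn(t,\bar x)$, is continuous because $\op Xn$ is separately continuous on $E_n\times X^n$, and $g$ is continuous. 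With this added, your proof is essentially identical to the paper's.
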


\begin{cor}\label{c:main:4}
Let $X$ be a semitopological $E$-algebra, and let $f\: X\to Z$ be a continuous map. Suppose  that $i\in\sset{0,1,2,3,3 \frac 12}$ and $Z$ is a $T_i$ space. Then there is a semitopological $E$-algebra $Y$, 
a continuous surjective homomorphism $g\:  X\to  Y$,  and a continuous map $h\: Y\to Z$ such that $f=h\circ g $ and $Y$ is a $T_i$ space.
\end{cor}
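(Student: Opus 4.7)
The plan is to invoke Theorem \ref{t:main:15} on least factorizations, applied to the given continuous map $f\: X\to Z$, to produce a semitopological $E$-algebra $Y$, a continuous surjective homomorphism $g\: X\to Y$, and a continuous map $h\: Y\to Z$ such that $f=h\circ g$ and, crucially, $Y$ is (topologically) embedded in $Z^A$ for some index set $A$. These $Y,g,h$ are already precisely the data that Corollary \ref{c:main:4} demands; the only thing still to verify is that $Y$ inherits the required separation axiom.

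For the separation axiom the argument is routine and requires no further construction: for each $i\in \sset{0,1,2,3,3\frac 12}$, the class of $T_i$ spaces is closed under the formation of arbitrary Cartesian products and under passage to subspaces. Since $Z$ is $T_i$ by hypothesis, the power $Z^A$ is $T_i$, hence so is the subspace $Y\subset Z^A$, which gives the conclusion.

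I do not foresee any real obstacle; all the work has been pushed into Theorem \ref{t:main:15}. The only points worth noting explicitly in writing are that the map $Y\hookrightarrow Z^A$ provided by that theorem is a topological embedding (so the subspace topology on $Y$ from $Z^A$ agrees with the topology making it a semitopological $E$-algebra), and that both productivity and hereditariness of the listed separation axioms are standard general-topology facts that can be quoted without proof.
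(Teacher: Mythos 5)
Your argument is correct and is exactly the intended derivation: the paper states Corollary \ref{c:main:4} without a separate proof because it follows from Theorem \ref{t:main:15} precisely as you describe, with $Y=g(X)$ sitting as a subspace of $Z^{S(X)}$ and the $T_i$ axioms for $i\in\sset{0,1,2,3,3\frac12}$ being productive and hereditary. Nothing is missing in your write-up.
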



\section{Extension of operations}
\label{sec:ext}

\begin{proposition}\label{p:ext:1} Let $X$, $Y$, and $Z$ be spaces and $(X, Y)$ be a $C$-pair.
Then any continuous map $f: X\times Y\to Z$ extends to a continuous map $\hat f: \bt X\times Y\to \bt Z$.
\end{proposition}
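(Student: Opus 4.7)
The plan is to reduce the general codomain $Z$ to the real-valued case, which is exactly what the $C$-pair hypothesis handles. Fix a homeomorphic embedding $e\: \bt Z\to [0,1]^A$ for some index set $A$ (for instance take $A=C(\bt Z,[0,1])$ and let $e$ be the evaluation map), and write $\pi_a\: [0,1]^A\to [0,1]$ for the $a$th projection. Since $\bt Z$ is compact Hausdorff, $e(\bt Z)$ is closed in $[0,1]^A$. Because $Z$ is Tychonoff the canonical map $\iota_Z\: Z\to\bt Z$ is an embedding, so we identify $Z\subset\bt Z$.

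First I would extend $f$ one coordinate at a time. For each $a\in A$ the composition
\[
\ph_a \eqdef \pi_a\circ e\circ f\: X\times Y\to [0,1]\subset \R
\]
is continuous. Because $(X,Y)$ is a $C$-pair, $\ph_a$ extends to a continuous map $\wh\ph_a\: \bt X\times Y\to \R$. As $X\times Y$ is dense in $\bt X\times Y$ and $[0,1]$ is closed in $\R$, this extension actually takes values in $[0,1]$.

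Next I would assemble the coordinates into the single map
\[
\hat f\: \bt X\times Y\to [0,1]^A,\qquad \hat f(\bar x,y)=(\wh\ph_a(\bar x,y))_{a\in A},
\]
which is continuous because each coordinate is, and which agrees with $e\circ f$ on $X\times Y$. The final step is to check that $\hat f$ takes values in $e(\bt Z)$, from which the desired map $\bt X\times Y\to \bt Z$ is obtained by identifying $e(\bt Z)$ with $\bt Z$. For a fixed $y\in Y$ the slice $\hat f(\cdot,y)\: \bt X\to [0,1]^A$ is continuous and agrees on the dense subset $X$ with $(e\circ f)(\cdot,y)$, whose image lies in $e(\bt Z)$; since $e(\bt Z)$ is closed, $\hat f(\bt X\times\sset y)\subset e(\bt Z)$ for every $y\in Y$.

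The main subtlety is not computational but conceptual: one must ensure that the pointwise-in-$a$ extensions, produced only from scalar-valued continuity guaranteed by the $C$-pair property, cohere so as to land inside the subspace $\bt Z\subset [0,1]^A$ rather than merely in the ambient cube. This is precisely what the slice argument above secures, using joint continuity of each $\wh\ph_a$, density of $X$ in $\bt X$, and closedness of $\bt Z$ in $[0,1]^A$.
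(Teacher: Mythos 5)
Your proof is correct and follows essentially the same route as the paper's: embed $\bt Z$ in a cube $[0,1]^A$, extend each real-valued coordinate of $e\circ f$ using the $C$-pair hypothesis, and diagonalize. The only difference is that you explicitly verify, via the slice-and-density argument, that the assembled map lands in $e(\bt Z)$ --- a point the paper's proof leaves implicit --- so nothing further is needed.
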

\begin{proof}
Let $\de\: \bt Z\to [0,1]^A$ be a topological embedding, let $\pi_\al\: [0,1]^A\to [0,1]$ be the projection onto the $\al$th coordinate, and let $\de_\al=\pi_\al \circ \de$ for $\al\in A$. Let $\hat \de_\al\: \bt X\times Y\to [0,1]$ be a continuous extension of $\de_\al$. We set $h=\diag_{\al\in A}\hat \de_\al$ and $\hat f= \de^{-1}\circ h$. Then $\hat f$ is a continuous extension of $f$.
\end{proof}

\begin{proof}[Proof of Theorem \ref{t:main:5}]
Let $n\in\sp E$. Since $(X^n,E_n)$ is a $C$-pair, it follows from Proposition \ref{p:ext:1}  that the map $\op Xn$ extends to a continuous map from $\bt(X^n)\times E_n$ to $\bt X$. Since $X^n$ is pseudocompact, it follows from the Glicksberg theorem \cite{gli1959} that $\bt(X^n)$ is homeomorphic to $(\bt X)^n$. Consequently, the mapping $\op Xn$ extends to a continuous mapping $\op {\bt X}n: E_n \times(\bt X)^n \to \bt X$.
\end{proof}

A mapping $f\: Z\to Y$ is called \term{$z$-closed} if it maps zero sets to closed sets.
Let $C^*(X)$ denote  the set of continuous bounded functions on a space $X$.

\begin{proposition}\label{p:ext:2} Let $X$ and $Y$ be spaces.
The following conditions are equivalent:
\begin{enumerate}
\item
$(X, Y)$ is a $C$-pair;
\item
$X$ is pseudocompact and the projection $\pi\: X\times Y\to Y$ is $z$-closed.
\end{enumerate}
\end{proposition}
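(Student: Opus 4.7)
The plan is to handle the two implications separately. For $(1)\rarr (2)$, pseudocompactness of $X$ follows because any unbounded continuous $f\: X\to\R$ would yield $F(x,y)=f(x)$, continuous on $X\times Y$, whose hypothetical extension $\wh F\: \bt X\times Y\to\R$ would be unbounded on the compact slice $\bt X\times\sset{y_0}$, an absurdity. For $z$-closedness of $\pi$, let $Z=f^{-1}(0)$ be a zero set of $X\times Y$ and take $y_0\notin\pi(Z)$, so that $f(\cdot,y_0)$ is continuous and non-vanishing on $X$. Then $1/f(\cdot,y_0)$ is continuous on $X$, hence bounded by pseudocompactness, whence $|f(\cdot,y_0)|\geq c>0$ on $X$. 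Extending $f$ to $\wh f$ on $\bt X\times Y$ via (1), density of $X$ in $\bt X$ yields $|\wh f(\cdot,y_0)|\geq c$ on $\bt X$, and a routine tube argument using compactness of $\bt X$ produces an open $U\ni y_0$ with $|\wh f|>c/2$ throughout $\bt X\times U$. In particular $U\cap\pi(Z)=\es$, so $\pi(Z)$ is closed.

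For $(2)\rarr (1)$, given continuous $f\:X\times Y\to\R$, I observe that each slice $f(\cdot,y)$ is bounded by pseudocompactness and extends to a continuous $g_y\:\bt X\to\R$. I define $\wh f(\xi,y)\eqdef g_y(\xi)$ and must prove that $\wh f$ is continuous. Fix $(\xi_0,y_0)$, set $c=\wh f(\xi_0,y_0)$ and $\ep>0$; by Urysohn's lemma on the compact Hausdorff space $\bt X$, choose $\psi\:\bt X\to[0,1]$ with $\psi(\xi_0)=1$ and $\psi\equiv 0$ on the closed set $\{\xi\in\bt X:|g_{y_0}(\xi)-c|\geq\ep/3\}$, so that $\psi(\xi)\geq 1/2$ forces $|g_{y_0}(\xi)-c|<\ep/3$. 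Consider the zero set
\[
Z\eqdef \{(x,y)\in X\times Y:\max(0,\tfrac12-\psi(x))+\max(0,\tfrac\ep2-|f(x,y)-c|)=0\},
\]
which is the locus where $\psi(x)\geq 1/2$ and $|f(x,y)-c|\geq\ep/2$. Any $(x,y_0)\in Z$ would force $|f(x,y_0)-c|\geq\ep/2$ while $\psi(x)\geq 1/2$ implies $|f(x,y_0)-c|<\ep/3$, a contradiction; hence $y_0\notin\pi(Z)$. By $z$-closedness some open $U\ni y_0$ misses $\pi(Z)$, and setting $V\eqdef\{\xi\in\bt X:\psi(\xi)>1/2\}$, density of $X\cap V$ in $V$ together with continuity of $g_y$ yields $|\wh f(\xi,y)-c|\leq\ep/2<\ep$ for all $(\xi,y)\in V\times U$. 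This establishes continuity of $\wh f$ at $(\xi_0,y_0)$.

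The main obstacle is $(2)\rarr(1)$, specifically the construction of a zero set of $X\times Y$ that \emph{localizes} around the potentially non-$X$ point $\xi_0$. The cutoff $\psi$, manufactured on $\bt X$ by Urysohn's lemma but exploited only through its restriction to $X$, is what feeds a genuinely local obstruction into the global $z$-closedness hypothesis; without such a cutoff, applying $z$-closedness to the naive zero set $\{|f-c|\geq\ep/2\}$ is useless, since its projection can easily contain $y_0$ because $f(\cdot,y_0)$ may stray from $c$ far from any neighborhood of $\xi_0$.
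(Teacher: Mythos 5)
Your proof is correct, but it takes a genuinely different route from the paper. The paper's proof is essentially a reduction to a cited result of Comfort and Hager: for (1) $\rarr$ (2) it notes that every $f\in C^*(X\times Y)$ extends over $\bt X\times Y$ and invokes \cite[Theorem 1.1]{ComfortHager1971} to get $z$-closedness of the projection, and for (2) $\rarr$ (1) it composes a given $f\in C(X\times Y)$ with a homeomorphism $\R\to(0,1)$, applies the same cited theorem to extend the resulting bounded function, and uses pseudocompactness of $X$ to see that the extension stays inside $(0,1)$ so that one can compose back. You instead prove both implications from scratch: in (1) $\rarr$ (2) the slice bound $|f(\cdot,y_0)|\geq c$ obtained from pseudocompactness, propagated to $\bt X$ by density and pushed to a tube $\bt X\times U$ by compactness, is exactly the right elementary argument; in (2) $\rarr$ (1) the slice-wise extensions $g_y$ assembled into $\wh f$, together with the Urysohn cutoff $\psi$ on $\bt X$ feeding the localized zero set $Z$ into the $z$-closedness hypothesis, correctly yield $|\wh f-c|\leq\ep/2$ on $V\times U$ (using that $V\subset\cl{X\cap V}$ and each $g_y$ is continuous), hence continuity of $\wh f$. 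In effect you have reproved the relevant direction of the Comfort--Hager theorem rather than citing it; what this buys is a self-contained argument that also handles unbounded $f$ directly, without the $(0,1)$-reparametrization, at the cost of length compared with the paper's two-line reduction. The only point worth polishing is the implicit assumption $Y\neq\es$ when you pick $y_0$ to deduce pseudocompactness of $X$ (the paper's ``obviously'' has the same implicit assumption), and you could state explicitly that $\wh f(x,y)=g_y(x)=f(x,y)$ for $x\in X$, so that $\wh f$ really is an extension.
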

\begin{proof}
(1) $\rarr$ (2)
Obviously, $X$ is pseudocompact.
Any function $f\in C^*(X\times Y)$ extends to a continuous bounded function on $\bt X\times Y$. From \cite[Theorem 1.1]{ComfortHager1971} it follows that the projection $\pi$ is closed.

(2) $\rarr$ (1)
Let $f\in C(X\times Y)$. Let $\ph\: \R\to (0,1)$ be some homeomorphism, and let $h=\ph\circ f$. Then $h\in C^*(X\times Y)$ and the projection $\pi$ is $z$-closed. From \cite[Theorem 1.1]{ComfortHager1971} it follows that the function $h$ extends to a continuous bounded function $\hat h\in C^*(\bt X\times Y)$. Since $X$ is pseudocompact, we have $\hat h(\bt X\times Y)\subset (0,1)$. Let us set $\hat f = \ph^{-1}\circ \hat h$.
Then $\hat f$ is a continuous extension of $f$.
\end{proof}

The following statement follows from Proposition \ref{p:ext:2}.

\begin{proposition}\label{p:ext:3}
A space $Y$ is $C$-universal if and only if the projection $\pi\: X\times Y\to Y$ is $z$-closed for any pseudocompact space $X$.
\end{proposition}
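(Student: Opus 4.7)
The plan is essentially to unwind the definitions and cite Proposition \ref{p:ext:2} in both directions; there is no real work beyond that, because the statement is a parametric reformulation of the equivalence already established.

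For the forward implication I would assume that $Y$ is $C$-universal and let $X$ be an arbitrary pseudocompact space. By the definition of $C$-universality, $(X,Y)$ is a $C$-pair, so condition (1) of Proposition \ref{p:ext:2} holds. Invoking the implication (1) $\Rightarrow$ (2) of that proposition yields that the projection $\pi\: X\times Y\to Y$ is $z$-closed. Since $X$ was an arbitrary pseudocompact space, this establishes the ``only if'' direction.

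For the reverse implication I would assume that $\pi\: X\times Y\to Y$ is $z$-closed for every pseudocompact space $X$. Fix an arbitrary pseudocompact $X$. Then condition (2) of Proposition \ref{p:ext:2} is satisfied (pseudocompactness of $X$ is built into the hypothesis, and $z$-closedness of the projection is the assumption), so the implication (2) $\Rightarrow$ (1) gives that $(X,Y)$ is a $C$-pair. Since this holds for every pseudocompact $X$, the space $Y$ is $C$-universal by definition.

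The hard part, if any, is purely bookkeeping: noticing that both the definition of $C$-universal and the hypothesis of Proposition \ref{p:ext:3} quantify over the same class of spaces (pseudocompact $X$), so the equivalence of Proposition \ref{p:ext:2} can be applied uniformly. No additional ingredients such as Glicksberg's theorem or the $z$-closedness criterion from \cite{ComfortHager1971} are needed here; these are already absorbed into Proposition \ref{p:ext:2}.
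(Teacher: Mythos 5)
Your proposal is correct and matches the paper, which simply notes that Proposition \ref{p:ext:3} follows from Proposition \ref{p:ext:2}; your argument just makes the two quantified applications of that equivalence explicit. Nothing further is needed.
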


Let $Y$ be a space. Let us denote by $\cS$ all subsets $Y'\subset Y$ for which $X\times Y'$ is bounded in $X\times Y$ for any pseudocompact space $X$.
The space $Y$ is called a \term{$b^*_R$-space} \cite{Noble1969} if a function $f\: Y\to\R$ is continuous whenever its restriction to each subset $Y'\in \cS$ is continuous.

The following statement follows from Proposition \ref{p:ext:3} and \cite[Corollary 1]{Noble1969}.

\begin{proposition}\label{p:ext:4}
Any $b^*_R$-space is $C$-universal.
\end{proposition}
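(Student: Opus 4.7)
The plan is to invoke the characterization from Proposition \ref{p:ext:3}: the space $Y$ is $C$-universal if and only if for every pseudocompact space $X$ the projection $\pi\:X\times Y\to Y$ is $z$-closed. So I fix a $b^*_R$-space $Y$ and an arbitrary pseudocompact space $X$, and the goal reduces to verifying that $\pi$ sends zero sets of $X\times Y$ to closed subsets of $Y$.

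Next I would pull in the characterization of $b^*_R$-spaces due to Noble. Recall that a subset $Y'\subset Y$ lies in $\cS$ precisely when $X\times Y'$ is bounded in $X\times Y$ for every pseudocompact $X$, so for the fixed pseudocompact $X$ any such $Y'$ is in particular a set on which the projection restricts nicely. Noble's Corollary 1 in \cite{Noble1969} is stated exactly in the form that projections $X\times Y\to Y$ along a pseudocompact factor are $z$-closed when $Y$ is a $b^*_R$-space; I would cite it directly. This is the engine of the proof.

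Combining the two, for the chosen pseudocompact $X$, Noble's result gives that $\pi$ is $z$-closed, and Proposition \ref{p:ext:3} then yields that $(X,Y)$ is a $C$-pair. Since $X$ was an arbitrary pseudocompact space, this shows that $Y$ is $C$-universal, completing the argument.

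The proof is really a bookkeeping exercise: the two technical facts (Proposition \ref{p:ext:3} and Noble's corollary) have already done all the heavy lifting. The only conceptual obstacle would be verifying that the definition of $b^*_R$-space used here (via the family $\cS$ of relatively-pseudocompactly-bounded subsets) matches the hypothesis of Noble's corollary; since the paper cites Noble for the definition of $b^*_R$-space as well, this match is by design, so no real obstruction arises.
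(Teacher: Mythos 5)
Your argument is exactly the paper's: the stated proof there is precisely the combination of Proposition \ref{p:ext:3} with \cite[Corollary 1]{Noble1969}, which is what you do. Correct, same approach.
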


The product of a pseudocompact space and a compact space is pseudocompact \cite[Corollary 3.10.27]{EngelkingBookGT}, so $\cS$ contains all compact subsets of $Y$. Therefore, any $k$-space is a $b^*_R$-space.

\begin{cor}\label{c:ext:1}
Any $k$-space is $C$-universal.
\end{cor}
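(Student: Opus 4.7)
My plan is to deduce Corollary \ref{c:ext:1} directly from Proposition \ref{p:ext:4} by verifying that every $k$-space is a $b^*_R$-space, which is the observation the text makes just before the statement.

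The first step is to check that every compact subset $K \subset Y$ belongs to the test family $\cS$. Given an arbitrary pseudocompact space $X$, the product $X \times K$ is pseudocompact by the classical result \cite[Corollary 3.10.27]{EngelkingBookGT} that a product of a pseudocompact space with a compact space is pseudocompact. A pseudocompact subspace is automatically bounded in any Tychonoff space containing it, so $X \times K$ is bounded in $X \times Y$. Hence every compact $K \subset Y$ lies in $\cS$.

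The second step invokes the $k$-space hypothesis. If $f \: Y \to \R$ is continuous on each member of $\cS$, then in particular $\restr{f}{K}$ is continuous for every compact $K \subset Y$, and the defining property of a $k$-space forces $f$ to be continuous on all of $Y$. Thus $Y$ is a $b^*_R$-space, and Proposition \ref{p:ext:4} immediately yields that $Y$ is $C$-universal. There is no real obstacle: the corollary is a routine composition of the preceding proposition with Engelking's product theorem. The only conceptually relevant point is that $\cS$ may be strictly larger than the family of compact subsets, so the $b^*_R$ condition is formally weaker than the $k$-space condition, making the implication go in the correct direction.
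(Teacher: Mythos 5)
Your argument is correct and coincides with the paper's own: the paper likewise notes that pseudocompact times compact is pseudocompact (hence bounded), so $\cS$ contains all compact subsets of $Y$, concludes that every $k$-space is a $b^*_R$-space, and then applies Proposition \ref{p:ext:4}. Your added remark that the $b^*_R$ condition is formally weaker than the $k$-space condition (since $\cS$ may exceed the compact subsets) is accurate and just makes the implication explicit.
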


\begin{proposition}\label{p:ext:5}
Let $T$ and $Y$ be spaces, let $X_1$, $X_2$, ..., $X_n$ be pseudocompact spaces,
and let $\F: T\times \prod_{i=1}^n X_n \to Y$ be a 
separately continuous mapping. Suppose that $(X_i,T)$ is Grotendieck for $i=1,2,...,n$.
\begin{enumerate}
\item
If the mapping $\F$ extends to a mapping $\Psi\: T\times \prod_{i=1}^n \bt X_n \to \bt Y$ such that the mapping
\[
\Psi_t\: \prod_{i=1}^n \bt X_n \to \bt Y,\ (x_1,x_2,...,x_n) \mapsto \Psi(t,x_1,x_2,...,x_n)
\]
is separately continuous for  each $t\in T$, then the mapping $\Psi$ is separately continuous.
\item
If the mapping
\[
\F_t\: \prod_{i=1}^n X_n \to Y,\ (x_1,x_2,...,x_n) \mapsto \F(t,x_1,x_2,...,x_n)
\]
is $2$-quasicontinuous for any $t\in T$, then the mapping $\F$ extends to a separately continuous mapping $\Psi\: T\times \prod_{i=1}^n \bt X_n \to \bt Y$.
\end{enumerate}
\end{proposition}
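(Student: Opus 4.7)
The plan is to treat part (1) first and then deduce part (2) by constructing $\Psi$ fiberwise in $t$ and invoking (1). For (1), the assumption that each $\Psi_t$ is separately continuous already delivers continuity of $\Psi$ in every $x_i$-variable, so what remains is the $t$-variable: for each fixed $\bar x = (x_1, \ldots, x_n) \in \prod_{i=1}^n \bt X_i$, the map $t \mapsto \Psi(t, \bar x)$ must be shown continuous. Embedding $\bt Y$ into a Tychonoff cube $[0,1]^A$ and projecting to coordinates, it suffices to verify this when $\Psi$ is replaced by $H \eqdef h \circ \Psi \: T \times \prod_{i=1}^n \bt X_i \to [0,1]$ for a continuous $h \: \bt Y \to [0,1]$.

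I will then argue by induction on $k \in \sset{0, 1, \ldots, n}$ that $t \mapsto H(t, \bar x)$ is continuous whenever $x_j \in \bt X_j$ for $j \le k$ and $x_j \in X_j$ for $j > k$; the base case $k = 0$ is immediate from the separate continuity of $\F$. For the inductive step, with $x_1, \ldots, x_k \in \bt X_j$ and $x_{k+2}, \ldots, x_n \in X_j$ fixed, the function $\phi \: X_{k+1} \times T \to [0,1]$ defined by $\phi(x, t) = H(t, x_1, \ldots, x_k, x, x_{k+2}, \ldots, x_n)$ is separately continuous: continuity in $x$ follows from the separate continuity of $\Psi_t$, and continuity in $t$ from the inductive hypothesis. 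Since $(X_{k+1}, T)$ is Grothendieck, Proposition \ref{p:main:1}(1) extends $\phi$ to a separately continuous $\tilde\phi \: \bt X_{k+1} \times T \to [0,1]$, while the same formula evaluated at $x \in \bt X_{k+1}$ is, for each fixed $t$, continuous in $x$ by the separate continuity of $\Psi_t$; uniqueness of continuous extensions from the dense subset $X_{k+1}$ forces the two functions to coincide on $\bt X_{k+1} \times T$, yielding continuity in $t$ for all $x \in \bt X_{k+1}$ and closing the induction.

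For (2), I will build $\Psi$ slice by slice. For each $t \in T$ the restriction $\F_t$ is separately continuous, $2$-quasicontinuous, and defined on a product of pseudocompact spaces; the extension machinery of \cite{Reznichenko2022-2} (the engine behind Theorem \ref{t:main:3}) then yields a separately continuous extension $\Psi_t \: \prod_{i=1}^n \bt X_i \to \bt Y$. Setting $\Psi(t, \bar x) \eqdef \Psi_t(\bar x)$ produces a map that extends $\F$ and whose $t$-slices are separately continuous by construction, so part (1) applies and completes the proof. The main technical obstacle lies inside the induction in (1): the uniqueness step requires that the $\bt X_{k+1}$-section of the restricted $H$ is already continuous, which is exactly why the separate continuity of $\Psi_t$ (not only of $\F$) is built into the hypotheses; in (2) the analogous pinch point is the cited per-slice extension, which is precisely where the $2$-quasicontinuity assumption is consumed.
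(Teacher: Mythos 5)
Your proof is correct and follows essentially the same route as the paper: reduce to $[0,1]$-valued maps via an embedding of $\bt Y$ into a cube, induct on how many coordinates have been promoted from $X_i$ to $\bt X_i$, use the Grothendieck hypothesis on $(X_{i},T)$ at each step, and obtain (2) by extending each slice $\F_t$ via the cited result of \cite{Reznichenko2022-2} and then applying (1). The only (immaterial) difference is that you invoke the Grothendieck property through the extension characterization of Proposition \ref{p:main:1}(1) together with uniqueness of continuous extensions, whereas the paper uses it directly via compactness of the closure of the image in $C_p(T,[0,1])$.
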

\begin{proof}
Let us prove (1). Consider the case when $Y=[0,1]$. We put
\[
\ph=\diag_{t\in T} \Psi_t\: \prod_{i=1}^n \bt X_n \to [0,1]^T,\ \ph(x_1,x_2,...,x_n) (t)=\Psi_t(x_1,x_2,...,x_n).
\]
For $i=0,1,...,n$, we set $P_i=\bt X_1\times \bt X_2\times ... \bt X_{i}\times X_{i+1} \times ... \times X_n$ and $\ph_i = \restr\ph{P_i}$.

The mapping $\ph$ is separately continuous and $\ph(P_0)\subset C_p(T,[0,1])$.
Let us show that $\ph(P_i)\subset C_p(T,[0,1])$ for $i=1,2,...,n$. Suppose that $\ph(P_{i-1})\subset C_p(T,[0,1])$.
Let $(x_1,x_2,...,x_n)\in P_i$. 
We denote 
\[
f\: \bt X_i\to [0,1]^T,\ x\mapsto \ph(x_1,x_2,...,x_{i-1},x,x_{i+1} ,...,x_n). 
\]
The mapping $f$ is continuous and $f(X_i)\subset C_p(T,[0,1])$.
Since $(X_i,T)$ is Grotendieck, it follows that $\cl{f(X_i)}$ is a compact subset of $C_p(T,[0,1])$. Therefore, $f(\be X_i)\subset C_p(T,[0,1])$.
Hence $\ph(P_n)\subset C_p(T,[0,1])$. Consequently, the mapping $\Psi$ is separately continuous.
%
%

Let us consider the general case. We denote $\cF=C(Y,[0,1])$. Let $f\in \cF$ and $t\in T$.
Let $\hat f\: \bt Y\to [0,1]$ be a continuous extension of the function $f$.
We put $\F_f=f\circ \F$, $\Theta_f= \hat f \circ \Psi$, and $\Theta_{f,t}=\hat f \circ \Psi_t$.
Since the mapping $\Psi_t$ is separately continuous, it follows that so is the mapping $\Theta_{f,t}$. We have
\[
 \Theta_{f,t}(x_1,x_2,...,x_n)= \Theta_{f}(t,x_1,x_2,...,x_n)
\]
for $x_1,x_2,...,x_n\in X$; hence  assertion (1) with $Y=[0,1]$ implies that the mapping $\Theta_{f}$ is separately continuous.
We put
\begin{align*}
\Theta &= \diag_{f\in \cF} \Theta_f\: T\times \prod_{i=1}^n \bt X_n \to [0,1]^{\cF},
&
\delta &= \diag_{f\in \cF} \hat f\: \bt Y\to [0,1]^{\cF}.
\end{align*}
Then the map $\Theta$ is separately continuous and $\Theta = \Psi \circ \delta$. Since $\delta$ is a homeomorphic embedding of $\bt Y$ into $[0,1]^{\cF}$, it follows that the map $\F$ is separately continuous.

Let us prove (2).
From \cite[Theorem 5]{Reznichenko2022-2} it follows that the mapping $\F_t$ extends to a separately continuous mapping $\Psi_t\: \prod_{i=1}^n \bt X_n \to \bt Y$. Let us put $\Psi(t,x_1,x_2,...,x_n)=\Psi_t(x_1,x_2,...,x_n)$. It follows from (1) that the mapping $\Psi$ is separately continuous.
\end{proof}

\begin{proof}[Proof of Theorem \ref{t:main:7}]
Let $n\in \sp E$. From Proposition \ref{p:ext:5} it follows that the mapping $\Psi_{n}$ extends to a separately continuous mapping $\wh \Psi_n\: E_n\times (\bt X)^n\to \bt X$.
\end{proof}

\begin{proof}[Proof of Theorem \ref{t:main:8}]
Let $n\in \sp E$ and $f\in E_n$. From \cite[Theorem 6]{Reznichenko2022-2} and \cite[Theorem 5]{Reznichenko2022-2} it follows that the mapping $\Psi_{n,f}$ is $2$-quasicontinuous. Next we apply Theorem \ref{t:main:7}.
\end{proof}


\section{Identities in extensions of $E$-algebras}
\label{sec:ident}
Let us prove Theorem \ref{t:main:11}.
Let $R=\vars p\cup \vars q$. We put
\begin{align*}
\vt p\: &\, (\bt X)^R\to \bt X,\ \bar x\mapsto p^{\bt }(\bar x),
&
\vt q\: &\, (\bt X)^R\to \bt X,\ \bar x\mapsto q^{\bt }(\bar x).
\end{align*}
We must prove that the mappings $\vt p$ and $\vt q$ coincide.
Let $s(\bar x)=|\set{i\in\sset{1,2,...,n}: x_i\notin X}|$. By induction on $s(\bar x)$ we show that $\vt p(\bar x)=\vt q(\bar x)$.

{\bf Base case.} If $s(\bar x)=0$, then $\bar x\in X^R$. Since the mappings $\vt p$ and $\vt q$ coincide on $X^R$, we have $\vt p(\bar x)=\vt q(\bar x)$.

{\bf Induction step.} Let $m=s(\bar x)>0$, and let $\vt p(\bar y)=\vt q(\bar y)$ for $\bar y\in (\bt X)^R$ with  $s(\bar y)< m$. Let $v\in R$ and $\bar x(v)\notin X$.
We define $\xi\: \bt X\to (\bt X)^R$ by setting $\xi(x)(u)=\bar x(u)$ if $u\neq v$ and $\xi(x) (u)=x$ if $u=v$. We put
\begin{align*}
\bar p\: &\, \bt X\to \bt X,\ x\mapsto \vt p(\xi(x)),
&
\bar q\: &\, \bt X\to \bt X,\ x\mapsto \vt q(\xi(x)).
\end{align*}
To prove the theorem, it is enough to check that the functions $\bar p$ and $\bar q$ coincide.
The induction hypothesis implies that $\bar p(x)=\bar q(x)$ for $x\in X$.
For $p$ and $q$, one of the conditions (1) and (2) in the statement of the  theorem is satisfied. Suppose for definiteness that this is condition (2): $\Om(p)(v)\leq 2$ and $\Om(q)(v)\leq 1$. Since $\Om(q)(v)\leq 1$, it follows that the function $\bar q$ is continuous.
If $\Om(p)(v)\leq 1$, then the function $\bar q$ is also continuous, $\bar p$ and $\bar q$ coincide on the dense set $X$, and hence $\bar p= \bar q$.

Consider the case $\Om(p)(v)= 2$. The variable $v$ twice occurs in the term $p$. We replace one of the occurrences with some variable $u\in \vV\setminus R$ and denote the resulting term by $r$. 
Let us define $\ph\: (\bt X)^2\to \bt X$  by setting $\ph(x,y)=r^{\bt X}(\bar z_{x,y})$, where $\bar z_{x,y}\in X^{R\cup \sset u}$ is defined by
\[
\bar z_{x,y}(w) = \begin{cases}
x&  \text{if } w=v,
\\
y & \text{if } w= u,
\\
\bar x(w) & \text{otherwise}
\end{cases}
\]
for $w\in R\cup \sset u$.
%
Then $\bar p(x)=\ph(x,x)$ and the function $\ph$ is separately continuous.
Since $\bar q(x)=\bar p(x)=\ph(x,x)$ for $x\in X$ and $\bar q$ is continuous, it follows that $\ph$ is continuous on $ \D_X=\set{(x,x):x\in X}$. Since the space $X$ is pseudocompact, the map $\ph$ is separately continuous, and $\ph$ is continuous on $\D_X$, it follows from \cite[Proposition 3.12]{ReznichenkoUspenskij1998} that $\ph$ is continuous on $\D_{ \bt X}=\set{(x,x):x\in \bt X}$, that is, the function $\bar p(x)=\ph(x,x)$ is continuous. Thus, the continuous functions $\bar p$ and $\bar q$ coincide on the dense set $X$. Therefore, $\bar p=\bar q$.


\section{Factorization of mappings and embedding of algebras into  products of separable metrizable algebras}
\label{sec:rfmap}

It is easy to see that a product  space $\prod_{i=1}^n X_i$
   is (separately) $\R$-factorizable over a space $T$ if, for any separable metrizable space $Z$ and any (separately) continuous function $f\: T\times \prod_{i=1}^n X_i\to Z $, there are separable metrizable spaces $Y_1$, $Y_2$, ..., $Y_n$, continuous maps $h_i\: X_i\to Y_i$ for $i=1, 2, ..., n$, and a continuous map $g \: T\times \prod_{i=1}^n Y_i\to Z$ such that $f=g\circ h$, where $h=\id_T\times \prod_{i=1}^n h_i$.
 
If all spaces $X_i$ coincide with each other and $X=X_i$ for $i=1,2,...,n$, then, passing to $Y=\prod_{i=1}^n Y_i$, we obtain a criterion for the (separate) $\R$-factorizability of a product $X^n$ over a space $T$: for any separable metrizable space $Z$ and any (separately) continuous function $f\: T\times X^n\to Z$, there is a separable metrizable space $Y$ and continuous maps $h\: X\to Y$ and $g\: T\times Y^n\to Z$ such that $f=g\circ (\id_T\times h^n) $.

\begin{proof}[Proof of Theorems \ref{t:main:12} and \ref{t:main:13}]
Let $f\: X\to\R$ be a continuous function.
We set $Z_1=f(X)$ and $f_1=f\: X\to Z_1$. Take a one-point space $Z_0$ and  let  $f_0\: X\to Z_0$, $g_0^1\: Z_1\to Z_0$, and $p_ {0,n}\: E_n\times Z_{1}^n\to Z_0$ be constant mappings.

By induction on $m\in\om$, we construct separable metrizable  spaces $Z_m$, continuous surjective mappings $f_m\: X\to Z_m$ and $g_m^{m+1}\: Z_{m+1}\to Z_m$, and (separately) continuous mappings $p_{m,n}\: E_n\times Z_{m+1}^n\to Z_m$ such that, for $\te_{n,m}=\id_{E_n}\times ( g_m^{m+1})^n$ and $\nu_{n,m}=\id_{E_n}\times f_m^n$, the following diagram is commutative:
\[
\begin{tikzcd}
&&&& X
\ar[lllld,"f_0"']
\ar[llld,"f_1"]
\ar[ld,"f_m"]
\\
Z_0 & Z_1 \ar[l,"g_0^{1}"] & ... \ar[l,"g_1^{2}"] & Z_{m} \ar[l,"g_{m-1} ^{m}"] & ... \ar[l,"g_{m}^{m+1}"]
&
\\
E_n\times Z_{1}^n \ar[u,"p_{0,n}"] & E_n\times Z_{2}^n \ar[u,"p_{1,n}"] \ar[ l,"\te_{n,2}"'] & ... \ar[l,"\te_{n,3}"'] & E_n\times Z_{m+1}^n \ar[u, "p_{m,n}"'] \ar[l,"\te_{n,m+1}"'] & ... \ar[l,"\te_{n,m+2}"']
&
\\
&&&& E_n\times X^n
\ar[uuu, bend right, "\Psi_n"]
\ar[lu,"\nu_{n,m+1}"']
\ar[lllu,"\nu_{n,2}"']
\ar[llllu,"\nu_{n,1}"]
\end{tikzcd}
\]

Let $m>1$. Let us assume that metrizable separable spaces $Z_{m-1}$, continuous surjective mappings $f_{m-1}\: X\to Z_{m-1}$, 
$g_{m-2}^{m-1}\: Z_{m-1}\to Z_{m-2}$, and (separately) continuous mappings $p_{m-2,n}\: E_n\times Z_{ m-1}^n\to Z_{m-2}$ such that $f_{m-2}=g_{m-2}^{m-1}\circ f_{m-1}$ and $ f_{m-2} \circ \Psi_n = p_{m-2,n} \circ (\id_{E_n} \times f_{m-1}^n)$ for $n\in\sp E$  are defined.

Let $n\in\sp E$. Let us set $\ph_{m,n}=f_{m-1}\circ \Psi_{n}\: E_n\times X^n\to Z_{n-1}$.
Since $X^n$ is (separately) $\R$-factorizable over $E_n$, it follows that there is a separable metrizable space $Z_{m,n}$, continuous maps $h_{m,n}\: X\to Z_{ m,n}$, and a (separately)  continuous map $r_{m,n}\: E_n\times Z_{m,n}^n\to Z_{m-1}$ such that $\ph_{m,n} =r_{m,n}\circ (\id_{E_n}\times h_{m,n}^n)$.

Let us put
\[
f_m=f_{m-1}\diagsmall \diag_{n\in \sp E}h_{m,n}\: X\to \Pi=Z_{m-1} \times \prod_{n\in \sp E } Z_{n,m}
\]
and $Z_m=f_m(X)$. 
Let $\pi_{m,n}: \Pi\supset Z_n\to Z_{n,m}$ and $g^m_{m-1}\: \Pi\supset Z_{m}\to Z_{m-1 }$ be projections.
Let us put $p_{m-1,n}=r_{m,n} \circ (\id_{E_n}\times \pi_{m,n}^n) \: E_n\times Z_m^n\to Z_{m- 1}$.
The construction is completed.

We set $h=\diag_{m\in\om}f_m\: X\to \prod_{m\in\om}Z_m$, $Y=h(X)$,
\[
\op Yn
=\rlim\sset{p_{n,m},\om}\: E_n\times Y^n=\rlim\sset{E_n\times Z_m,\te_{n,m},\om}\to Y =\rlim\sset{Z_m,g_m^{m+1},\om}
\]
for $n\in\sp E$ and let $g\: Y\to Z_1\subset \R$ be the projection from $Y$ to $Z_1$.
Then $Y$ is a (semi)topological algebra with   algebraic structure $\sqn{\op Yn}$, the function $g$ is continuous, $h\: X\to Y$ is a continuous homomorphism, and $f=g\circ h$.
\end{proof}

A product  space $\prod_{i=1}^n X_i$ is called \term{$\R$-factorizable} if the product $\prod_{i=1}^n X_i$ is $\R$-factorizable over a one-point space \cite{Reznichenko2024rf}.
We call a subset $U\subset \prod_{i=1}^n X_n$ \term{rectangular} if $U=\prod_{i=1}^n U_n$ for some $U_i\subset X_i$,  $i= 1,2,...,n$.

The following statement is easy to prove.

\begin{proposition}\label{p:rfmap:3-1}
A product  space $P=\prod_{i=1}^n X_i$ is $\R$-factorizable if and only if any cozero set $W\subset P$ is a countable union of rectangular cozero sets.
\end{proposition}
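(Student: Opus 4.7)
The plan is to prove both directions by translating between factorizations $f=g\circ h$ and countable rectangular cozero decompositions of the level sets $f^{-1}((a,b))$.

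For the direction ($\Rightarrow$), I start with a cozero set $W\subset P$, so $W=f^{-1}(\R\setminus\{0\})$ for some continuous $f\: P\to\R$. By $\R$-factorizability I get separable metrizable $Y_i$, continuous $h_i\: X_i\to Y_i$, and continuous $g\: \prod Y_i\to\R$ with $f=g\circ h$, where $h=\prod h_i$. The set $V=g^{-1}(\R\setminus\{0\})$ is a cozero set in the separable metrizable space $\prod Y_i$, which has a countable base consisting of rectangular open sets $\prod_i V_{k,i}$ with each $V_{k,i}$ cozero in $Y_i$; hence $V$ is a countable union of such rectangular cozero sets. Pulling back through $h$ gives $W=h^{-1}(V)=\bigcup_k\prod_i h_i^{-1}(V_{k,i})$, and each $h_i^{-1}(V_{k,i})$ is a cozero set in $X_i$.

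For the direction ($\Leftarrow$), fix a continuous $f\: P\to\R$. For each pair of rationals $a<b$, write $f^{-1}((a,b))=\bigcup_{k\in\om}\prod_i U_{k,a,b,i}$ with each $U_{k,a,b,i}$ a cozero set in $X_i$, say $U_{k,a,b,i}=\{x\in X_i : \ph_{k,a,b,i}(x)\neq 0\}$ for some continuous $\ph_{k,a,b,i}\: X_i\to\R$. Let $\cF_i$ be the countable collection of all these $\ph_{k,a,b,i}$, and define $h_i\: X_i\to\R^{\cF_i}$ by $h_i(x)(\ph)=\ph(x)$. Set $Y_i=h_i(X_i)$, which is separable metrizable since $\R^{\cF_i}$ is, and let $h=\prod_i h_i\: P\to Y=\prod_i Y_i$.

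The main verification is that $f$ is constant on fibres of $h$ (so that $g\: Y\to\R$ with $f=g\circ h$ is well defined) and that the resulting $g$ is continuous. For constancy, if $h(x)=h(y)$ but $f(x)<f(y)$, pick rationals $a<b$ with $f(x)<a<b<f(y)$, say; more usefully, consider a rectangular piece $\prod_i U_{k,a,b,i}$ containing $y$ in the decomposition of $f^{-1}((a,b))$: each $\ph_{k,a,b,i}\in\cF_i$ takes the same value on $x_i$ and $y_i$, so $x\in\prod_i U_{k,a,b,i}\subset f^{-1}((a,b))$, contradicting $f(x)<a$. For continuity of $g$, for rationals $a<b$,
\[
g^{-1}((a,b))=h(f^{-1}((a,b)))=\bigcup_k h\Bigl(\prod_i h_i^{-1}(V_{k,a,b,i})\Bigr)=\bigcup_k \Bigl(\prod_i V_{k,a,b,i}\Bigr)\cap Y,
\]
where $V_{k,a,b,i}\subset Y_i$ is the coordinate cozero set corresponding to $\ph_{k,a,b,i}$; this is open in $Y$. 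Then $f=g\circ h$ is the required factorization.

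The main obstacle is the $(\Leftarrow)$ direction, and specifically ensuring that the map $g$ obtained on $Y=h(P)$ is actually continuous. The step that makes this work is that the sets $h_i^{-1}(V_{k,a,b,i})$ are exactly the original rectangular factors $U_{k,a,b,i}$, so rectangular cozero decompositions in $P$ push forward to open descriptions of $g^{-1}((a,b))$ in $Y$; once the countable family $\cF_i$ is chosen to witness all decompositions for rational intervals simultaneously, both the fibre-constancy and the continuity follow from the same calculation.
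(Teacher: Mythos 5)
The paper offers no argument for Proposition \ref{p:rfmap:3-1} (it is labelled ``easy to prove''), so there is nothing of the paper's to compare against; your two-direction proof is the natural one and is essentially correct. The forward direction is fine: $\prod_i Y_i$ is second countable, every open subset of it is a countable union of basic rectangular open (hence cozero) sets, and pulling these back through $h=\prod_i h_i$ gives the decomposition of $W$. (Also note that if ``rectangular cozero set'' is read as a rectangular set that is cozero in $P$, a nonempty such set automatically has cozero factors, since slices of cozero sets are cozero, so your reading of the hypothesis is equivalent to the literal one.)

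The only flaw is the choice of rationals in the fibre-constancy step of the converse. With $f(x)<a<b<f(y)$ the point $y$ does \emph{not} lie in $f^{-1}((a,b))$, so there is no rectangular piece of that decomposition containing $y$ and the argument as stated cannot start. Choose instead rationals $a<b$ with $f(x)<a<f(y)<b$; then $y$ lies in some piece $\prod_i U_{k,a,b,i}$ of the decomposition of $f^{-1}((a,b))$, the equality $h(x)=h(y)$ forces $\ph_{k,a,b,i}(x_i)=\ph_{k,a,b,i}(y_i)\neq 0$ for all $i$, hence $x\in\prod_i U_{k,a,b,i}\subset f^{-1}((a,b))$, contradicting $f(x)<a$ --- exactly the contradiction you intended. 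With that correction the rest goes through as you wrote it: $g$ is well defined on $Y=\prod_i Y_i=h(P)$, the identity $h_i^{-1}(V_{k,a,b,i})=U_{k,a,b,i}$ together with surjectivity of each $h_i$ onto $Y_i$ gives $g^{-1}((a,b))=\bigcup_k\prod_i V_{k,a,b,i}$, open in $Y$, and since rational intervals form a base of $\R$, the map $g$ is continuous, yielding the required factorization $f=g\circ h$.
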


Since  any cozero set in a Lindelöf space  is Lindelöf, we obtain the following corollary of Proposition \ref{p:rfmap:3-1}.

\begin{proposition}\label{p:rfmap:3}
If a  product  space $P=\prod_{i=1}^n X_i$ is Lindelöf, then $P$ is $\R$-factorizable.
\end{proposition}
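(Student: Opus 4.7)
The plan is to apply the criterion of Proposition~\ref{p:rfmap:3-1}: it suffices to show that every cozero set $W\subset P$ is a countable union of rectangular cozero sets. Since every cozero set is an $F_\sigma$ set (write $W=\{f\neq 0\}=\bigcup_{k\in\om}\{|f|\ge 1/k\}$ with each $\{|f|\ge 1/k\}$ closed), and $F_\sigma$ subsets of a Lindelöf space are Lindelöf, the subspace $W$ is Lindelöf. So the goal reduces to exhibiting a cover of $W$ by rectangular cozero subsets of $W$; Lindelöfness then extracts a countable subcover.

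For the cover, fix $x=(x_1,\dots,x_n)\in W$. The rectangular open sets $U_1\times\cdots\times U_n$ form a base of the product topology, so there is such a basic open set contained in $W$ and containing $x$. Because each $X_i$ is Tychonoff, inside each $U_i$ one can find a cozero neighbourhood $V_i\ni x_i$ of $X_i$, say $V_i=\{g_i\neq 0\}$ for some continuous $g_i\:X_i\to[0,1]$. The product $V_1\times\cdots\times V_n$ is a rectangular subset of $W$ containing $x$, and it is a cozero set of $P$: indeed, the function $(x_1,\dots,x_n)\mapsto g_1(x_1)g_2(x_2)\cdots g_n(x_n)$ is continuous on $P$ and vanishes precisely outside $V_1\times\cdots\times V_n$. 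Taking the union over $x\in W$ yields the desired cover of $W$ by rectangular cozero sets contained in $W$.

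Both main steps are routine; the only point requiring a little care is the verification that a product of cozero sets is a cozero set in $P$, which is handled by multiplying the witnessing functions. There is no substantive obstacle.
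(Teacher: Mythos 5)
Your proof is correct and follows essentially the same route as the paper, which derives the proposition from Proposition~\ref{p:rfmap:3-1} via the one-line observation that a cozero set in a Lindelöf space is Lindelöf (being $F_\sigma$); you simply spell out the details the paper leaves implicit (the cover by rectangular cozero sets obtained from Tychonoffness of the factors, the product of witnessing functions, and the countable subcover). No gaps.
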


Given spaces $X_1$, $X_2$, ..., $X_n$ and a space $Y$, we denote by $SC_p(\prod_{i=1}^n X_i,Y)$ the space of separately continuous mappings from $P=\prod_{i=1}^n X_i$ to $Y$ in the topology of pointwise convergence, that is, in the topology inherited from $Y^P$. We set $SC_p(\prod_{i=1}^n X_i,\R)=SC_p(\prod_{i=1}^n X_i)$.

\begin{proposition}\label{p:rfmap:4}
Let $X_1$, $X_2$, ..., $X_n$ be compact spaces with caliber $\om_1$, and let $T$ be a separable space.
Then any compact subset of $SC_p(T\times\prod_{i=1}^n X_i)$ is metrizable.
\end{proposition}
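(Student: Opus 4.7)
The goal is to prove that any compact subset $K$ of $SC_p(T\times\prod_{i=1}^n X_i)$ is metrizable. Since $K$ is compact Hausdorff and embeds in $\R^{T\times\prod_i X_i}$, it suffices to exhibit a countable $S\subset T\times\prod_i X_i$ such that the restriction map $K\to \R^S$ is injective: continuity of the restriction is automatic, and compactness of $K$ then makes it a homeomorphism onto a compact subset of the second-countable space $\R^S$.

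The plan is to organize the reduction in two layers. First, I would absorb the $T$-coordinate using its separability. Fix a countable dense $D_0\subset T$. Since every $f\in K$ is continuous in the first argument with the remaining coordinates held fixed, $f$ is determined by its restriction to $D_0\times\prod_{i=1}^n X_i$. Hence the map $K\to SC_p(\prod_{i=1}^n X_i)^{D_0}$, $f\mapsto (f(t,\cdot))_{t\in D_0}$, is a continuous injection into a countable product. Since a countable product of metrizable compacta is metrizable, it would suffice to prove that each slice compact $K_t=\{f(t,\cdot):f\in K\}\subset SC_p(\prod_{i=1}^n X_i)$ is metrizable.

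Second, I would eliminate the factors $X_1,\dots,X_n$ one at a time by induction on $n$, using caliber $\om_1$ at each step. The base case $n=1$ is the statement that a compact subset $L\subset C_p(X_1)$ is metrizable when $X_1$ is compact with caliber $\om_1$; here I would combine the Fr\'echet--Urysohn/angelicity of compact subsets of $C_p$ with the caliber hypothesis to extract a countable $A_1\subset X_1$ that separates the points of $L$. For the inductive step applied to a compact $L\subset SC_p(\prod_{i\le n}X_i)$, the slice map $f\mapsto f(\cdot,x_n)$ sends $L$ continuously into $SC_p(\prod_{i<n}X_i)$ for each $x_n\in X_n$, its image being metrizable by the induction hypothesis; a further application of caliber $\om_1$ on $X_n$ then yields a countable $A_n\subset X_n$ whose slices already separate $L$. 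Combining with the reduction of $T$ to $D_0$ produces the desired countable set $S=D_0\times\prod_{i=1}^n A_i$.

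The main obstacle is the caliber-based extraction step. For each pair $f\ne g$ the set $\{x\in X_1 : f(x)\ne g(x)\}$ is open and nonempty in $X_1$, but the family of such sets indexed by pairs in $K$ may a priori have cardinality exceeding $\om_1$, so the caliber definition is not directly applicable. The extra structure of compact subsets of $C_p$ (angelicity and sequential compactness) must be invoked to bring the effective size of this family down to $\om_1$ before caliber $\om_1$ can be used to extract a \emph{countable} separator rather than merely a smaller one. Propagating this reduction uniformly through the slice maps at the inductive step, while coherently combining the separately chosen $A_i$ into a single separating countable set, is the technical heart of the argument.
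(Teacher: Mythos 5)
Your outer reduction is exactly the one used in the paper: fix a countable dense $D_0\subset T$, observe that separate continuity in the $T$-variable makes every $f\in K$ determined by its restriction to $D_0\times\prod_{i=1}^n X_i$, and conclude via the continuous injection $f\mapsto (f(t,\cdot))_{t\in D_0}$ that it suffices to know that compact subsets of $SC_p(\prod_{i=1}^n X_i)$ are metrizable. At precisely this point the paper stops and cites \cite[Proposition 3.20]{ReznichenkoUspenskij1998}, which is verbatim the statement you still need: for compact factors of caliber $\om_1$, compact subsets of the space of separately continuous functions (pointwise topology) are metrizable.

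The gap is in your replacement for that citation. Your induction on $n$ hinges on the step you yourself defer as ``the technical heart'': extracting a countable $A_i\subset X_i$ separating the points of a compact $L$ by ``a further application of caliber $\om_1$''. Caliber $\om_1$ only says that an \emph{uncountable} family of nonempty open sets has an uncountable subfamily with a common point; you give no mechanism converting the failure of countable separation into such a family of size $\om_1$ (the disagreement sets are indexed by pairs of points of $L$, a set of size up to $2^{\mathfrak c}$), nor any argument deriving a contradiction from a single point lying in uncountably many disagreement sets. Angelicity or the Fr\'echet--Urysohn property of $L$ does not shrink this index set to $\om_1$, so the extraction as described is unjustified and I see no direct way to make it work. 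The known proof of even the base case $n=1$ runs along a different line: the evaluation image $X_1'$ of $X_1$ in $C_p(L)$ is an Eberlein compactum (Grothendieck's theorem, after composing with a homeomorphism of $\R$ onto a bounded interval), it has caliber $\om_1$ and hence ccc as a continuous image of $X_1$, ccc Eberlein compacta are metrizable, hence $X_1'$ is separable, and a countable set of $X_1$ mapping onto a dense subset of $X_1'$ separates $L$; the inductive step for separately continuous functions needs a further Grothendieck-type argument in the same spirit. In other words, what you have left open is exactly the content of \cite[Proposition 3.20]{ReznichenkoUspenskij1998}; either cite it, as the paper does, or supply this Eberlein-compactness argument in place of the direct caliber extraction.
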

\begin{proof}
Let $\set{t_n:\nom}$ be a dense subset of $T$.
Let us denote $Y=SC_p(T\times\prod_{i=1}^n X_i)$, $Z=SC_p(\prod_{i=1}^n X_i)$, and
\[
\ph_m\: Y \to Z,\ \ph_m(\F)(x_1,x_2,...,x_n)=\F(t_m,x_1,x_2,...,x_n)
\]
for $m\in\om$.
The compact subsets of $Z$ are metrizable \cite[Proposition 3.20]{ReznichenkoUspenskij1998}.
Since the mapping $\ph = \diag_{m\in\om} \ph_m\: Y\to Z^\om$ is injective,  it follows that the compact subsets of $Y$ are metrizable.
\end{proof}

\begin{proof}[Proof of Proposition \ref{p:main:3}]
Let us prove (1). From Proposition \ref{p:rfmap:3} it follows that the product $T\times \prod_{i=1}^n X_i$ is $\R$-factorizable. Consequently, the product $\prod_{i=1}^n X_i$ is $\R$-factorizable over $T$.

Let us prove (2). Let $f\: T\times \prod_{i=1}^n X_i\to \R$ be a separately continuous map.
Let us put
\begin{align*}
\de_i\: X_i & \to SC_p(T\times \prod_{j\neq i} X_j),
\\
& h_i(x)(x_1,x_2,...,x_{i-1},x_{i+1},...,x_n)=\F(x_1,x_2,...,x_{i- 1},x,x_{i+1},...,x_n)
\end{align*}
for $i=1,2,...,n$. From Proposition \ref{p:rfmap:4} it follows that the spaces $Y_i=h_i(X_i)$ are compact metrizable spaces for $i=1,2,...,n$.
From \cite[Claim 3.9]{ReznichenkoUspenskij1998} it follows that there is a separately continuous function
$g\: T\times \prod_{i=1}^n Y_i$ such that $f=g\circ h$, where $h=\id_T\times \prod_{i=1}^n h_i$.
\end{proof}


\section{Least factorization}
\label{sec:mfac}
In this section we do not assume any  separation axioms.
Let us prove Theorem \ref{t:main:15}.

Let us put
\[
g=\diag_{\si\in S(X)} f \circ \si \: X\to Z^{S(X)},\ \ \ Y=g(X).
\]
For $\si\in S(X)$, we denote by $\pi_\si\: Y\to Z$ the $\si$-projection $Z^{S(X)}\supset Y$: $\pi_\si(y)=y(\si)$.
Let us denote $h=\pi_e$. By construction, $Y\subset Z^{S(X)}$, the mappings $g$ and $h$ are continuous, and $f=h\circ g$.
To prove the theorem, it remains to prove that on $Y$ there is a structure $\sqn{\op Yn}$ of a semitopological $E$-algebra with respect to which the mapping $g$ is a homomorphism, and the factorization $(g,Y, h)$ is least.

For $x,y\in X$, we set $x\sim y$ if $g(x)=g(y)$. Let us check that the equivalence relation $\sim$ is a congruence. To do this, it is enough to check that if $x\sim y$ and $\de\in S_1(X)$, then $\de(x)\sim \de(y)$ (see Proposition \ref{p:prel:1}). Note that $\de(x)\sim \de(y)$ if and only if $g(\de(x))=g(\de(y))$ or, equivalently,
\begin{equation}\label{eq:mfac:1}
\pi_\si(g(\de(x)))=\pi_\si(g(\de(y)))
\end{equation}
for all $\si\in S(X)$; thus, it suffices to check that (\ref{eq:mfac:1}) holds for $\si\in S(X)$.
The relations  $\pi_\si(g(\de(x)))=f(\si(\de(x)))=f((\si\circ\de)(x))=\pi_{\si\circ \de}(g(x))$, $\pi_\si(g(\de(y)))=\pi_{\si\circ \de}(g(x))$, and $g (x)=g(y)$  imply (\ref{eq:mfac:1}).
Hence $\sim$ is a congruence and there is an $E$-algebra structure $\sqn{\op Yn}$ on $Y$ such that the mapping $g$ from $X$ to $Y$ is a homomorphism.
In order to verify that $Y$ is a semitopological algebra, it is necessary and sufficient to check the following conditions:
\begin{enumerate}
\item[($SC_1$)] any principal translation $\te\in S_1(Y)$ is continuous;
\item[($SC_2$)] for $n\in\sp E$ and $\bar y=(y_1,y_2,...,y_n)\in Y^n$, the mapping $\ph\: E_n\to Y $, $f\mapsto \op Yn(f,\bar y)$, is continuous.
\end{enumerate}

Let us check ($SC_1$). There is $\de\in S_1(X)$  such that $g\circ \de=\te\circ g$.
Let $y\in Y$. Then $y=g(x)$ for some $x\in X$. Since
$\pi_\si(\te(y))=\pi_\si(g(\de(x)))=\pi_{\si\circ \de}(g(x))=\pi_{\si \circ \de}(y)$ and the mapping $\pi_{\si\circ \de}$ is continuous, it follows that $\pi_\si\circ \te$ is continuous for all $\si\in S(X) $. Therefore $\te$ is continuous.

Let us check ($SC_2$). There is $\bar x\in X^n$  such that $\bar y= g^n(\bar x)$. We have $\ph=g\circ \psi$, where 
\[
\psi\: E_n\to Y,\ f\mapsto \Psi_n(f,\bar x). 
\]
Since the mappings $g$ and $\psi$ are continuous, it follows that $\ph$ is continuous.

Let us check that the factorization $(g,Y, h)$ is least. Let $(q,S,r)$ be a continuous factorization of the map $f$, where $S$ is a semitopological $E$-algebra. The homomorphism $q$ of algebras induces a homomorphism $\vt q\: S(X)\to S(S)$ of translation semigroups such that $\vt q(\si)\circ q = q\circ \si $ for $\si\in S(X)$. Let us put
\[
s=\diag_{\si\in S(X)} r \circ \vt q(\si) \: S\to Z^{S(X)}.
\]
Since $f \circ \si=r\circ q\circ \si =r \circ \vt q(\si) \circ q$, we have $g=s\circ q$.
Therefore $(g,Y, h)\prec (q,S,r)$.


\bibliographystyle{elsarticle-num}
\bibliography{ualg}
\end{document}